\def\red #1{\textcolor{red}{#1}}
\def\blue #1{\textcolor{blue}{#1}}
\def\purp #1{\textcolor{purple}{#1}}
\def\black #1{\textcolor{black}{#1}}
\definecolor{darkbrown}{rgb}{.3,.1,.1}
\def\coltab{\black}            
\def\ptt #1{{\small\tt #1}}    
\def\boxit [#1]#2{\vbox{\hrule\hbox{\vrule
     \vbox spread #1{\vss\hbox spread#1{\hss #2\hss}\vss}%
        \vrule}\hrule}}
\newbox\algo
\font\algofont=cmtt10 scaled 1100
\newenvironment {algorithme}{\smallskip \smallskip
\bgroup\algofont\parindent= 0mm}{\egroup
\smallskip\smallskip}
\newtheorem{thm}{Theorem}[section]
\newtheorem{cor}[thm]{Corollary} 
\newtheorem{prop}[thm]{Proposition} 
\newtheorem{lemma}[thm]{Lemma}
\newtheorem{remark}[thm]{Remark}
\newtheorem{property}[thm]{Property}
\newtheorem{observation}[thm]{Observation}
\newtheorem{definition}[thm]{Definition}
\newtheorem{example}[thm]{Example}
\def\newline{\hfill\break}
\def\newpage{\vfill\break}
\def\square{\hbox{\vrule\vbox{\hrule\phantom{o}\hrule}\vrule}}
\def\square{\hbox{\vrule\vbox{\hrule\phantom{O}\hrule}\vrule}}
\definecolor{shadecolor}{RGB}{0,0,150}
\def\Int{\mathrm{Int}}
\def\Ext{\mathrm{Ext}}
\def\min{\mathrm{min}}
\def\max{\mathrm{max}}
\def\ass{\mathrm{Part}}
\def\F{{\cal F}}
\def\X{\bullet}
\def\A{\underline{\mathrm{acl}}}
\def\AA{\mathrm{acl}}
\def\ep{\varepsilon}
\def\io{\iota}
\def\s{\setminus}
\def\bk{\backslash}
\def\ss{\smallskip}
\def\finsi{}
\def\plus{\uplus} 
\def\M{M}
\def\bysame{\rule{1cm}{0.1mm}}
\def\eme#1{}
\def\emeref#1{}
\def\emenew#1{}
\def\emevder#1{}
\title{The Active Bijection 
\\ 
2.a -
Decomposition of activities for matroid bases, 
\\ and Tutte polynomial of a matroid in terms of beta invariants of minors}
\author[lirmm]{Emeric Gioan\corref{cor1}}
\ead{emeric.gioan@lirmm.fr}
\author[paris]{Michel Las Vergnas\corref{cor2}}
\address[lirmm]{CNRS, LIRMM, Universit\'e de Montpellier, France}
\address[paris]{CNRS, Paris, France}
\date{}
\begin{document}

\begin{abstract} 
\noindent
We introduce and study filtrations of a matroid on a linearly ordered ground set, which are particular sequences of nested sets. A given basis can be decomposed into a uniquely defined sequence of bases of minors, such that these bases have an internal/external activity equal to $1/0$ or $0/1$ (in the sense of Tutte polynomial activities).
This decomposition, which we call the active filtration/partition of the basis, refines the known partition of the ground set into internal and external elements with respect to a given basis.
It can be built by a certain closure operator, which we call the active closure. It relies only on the fundamental bipartite graph of the basis and can be expressed also as a decomposition of general bipartite graphs on a linearly ordered set of vertices.
\emevder{laisser les "which we call" ? ou les enlever ?}

From this, first, structurally, we obtain that the set of all bases can be canonically partitioned and decomposed 
in terms of 
such bases of minors induced by filtrations. 
Second, enumeratively,\emevder{OU numerically, }
we derive an expression of the Tutte polynomial of a matroid in terms of beta invariants of minors. This expression refines at the same time the classical expressions in terms of basis activities and orientation activities (if the matroid is oriented), and the well-known convolution formula for the Tutte polynomial.
Third, in a companion paper of the same series (No. 2.b), we use this decomposition of matroid bases,  along with a similar decomposition of oriented matroids, and along with a bijection in the $1/0$ activity case from a previous paper (No. 1), to define the canonical active  bijection between orientations/signatures/reorientations and spanning trees/simplices/bases of a graph/real hyperplane arrangement/oriented matroid, as well as various related bijections.

\end{abstract}

\maketitle   





\emevder{faire corrections d'anglais, comme dans chapter, voir fichier anglais.tex  passage "SUR CHAPTER"}

\section{Introduction}
\label{sec:intro}

\eme{a mettre ? Erratum : phd, FPSAC 03 + mention : supersolv, LP}%

\eme{REMPLACER LE PLUS POSSIBLE on a ienarly ordered element set PAR ordered, MEME DANS INTRO}%

This paper studies some structural and enumerative properties of matroids on a linearly ordered ground set.
We introduce and study filtrations of a matroid on a linearly ordered ground set, which are simple particular sequences of nested subsets of the ground set (Definition~\ref{def:general-filtration}). They induce particular sequences of minors by the following manner: for each subset in the sequence, we consider the minor obtained by restriction to this subset and contraction of the subsets it contains.

A given basis can be decomposed into a uniquely defined sequence of bases of such minors (Theorem \ref{thm:unique-dec-seq-bas}), such that these bases have an internal/external activity equal to $1/0$ or $0/1$, in the sense of Tutte polynomial activities, as introduced by Tutte in \cite{Tu54}.
This decomposition can be seen as a partition that refines the known partition of the ground set into internal and external elements with respect to a given basis, as defined by Etienne and Las Vergnas in \cite{EtLV98}.
We call this unique special filtration/partition the active filtration/partition of the basis.

From a constructive viewpoint,
it can be built by
applying a certain closure operator, which we call the active closure, to the internally/externally active elements of the basis, by several equivalent possible manners which are detailed in the paper (including notably a simple single pass over the ground set). This construction only  relies upon the fundamental bipartite graph of the basis and can be also expressed  as a decomposition of bipartite graphs on a linearly ordered set of vertices.

At a global level, we obtain that the set of all bases can be canonically partitioned and decomposed 
in terms of 
such uniactive internal/external bases of minors induced by all filtrations, which is the main result of the paper (Theorem \ref{th:dec_base}).
\bigskip

As the enumerative counterpart of the above structural decomposition theorem, we derive an expression of the Tutte polynomial of a matroid in terms of beta invariants of minors (Theorem \ref{th:tutte}):\break
%
$$t(M;x,y)= \ \ \sum \ \ \Bigl(\prod_{1\leq k\leq \io}
\beta \bigl( M(F_k)/F_{k-1}\bigr)\Bigr)
 \ \Bigl(\prod_{1\leq k\leq \ep}\beta^* \bigl( M(F'_{k-1})/F'_{k}\bigr)\Bigr)\ {x^\io\  y^\ep}$$
\noindent 
where
$\beta^*$ equals $\beta$ of the dual (that is $\beta$ except for an isthmus or a loop),
and where the sum is over all (connected) filtrations 
$\emptyset= F'_\ep\subset...\subset F'_0=F_c=F_0\subset...\subset F_\io= E$
of $M$. 
\emevder{donner def de filtration dans intro ?}%
The beta invariant $\beta(M)$  of a matroid $M$ is equal of the coefficent of $x$ in the Tutte polynomial $t(M;x,y)$. It was specifically considered and so named by Crapo in \cite{Cr67}. In particular, it counts the number of bases having an internal/external activity equal to $1/0$ (or also $0/1$ as soon as the matroid has at least two elements) with respect to any linear ordering of the ground set.
It also remarkably counts
the number of bounded regions of a real hyperplane arrangement 
(bipolar orientations in digraphs), 
as shown by Zaslavsky in \cite{Za75} and generalized to oriented matroids by Las Vergnas in \cite{LV77}
(see also \cite{GrZa83}, 
and see \cite{AB1, AB2-b} for the 
connection with bases, or \cite{GiLV05, ABG2} in graphs).
\emevder{au debut j'avai splus detaille historique de beta orientations, mais enleve, ca donnait ca
***UTILE ?***
It also counts the number of bounded regions of a real hyperplane arrangement (regions that do not touch any given hyperplane, on one side of this hyperplane) or, in particular, the number of bipolar orientations of a graph
(acyclic orientations with adjacent unique source and unique sink, with fixed orientation for this source-sink edge), as shown by Greene and Zaslavsky in \cite{GrZa83}\red{REF???? plutot zaslavsky\cite{Za75}} \purp{refs utiles a orientations?} (a result extended to oriented matroids by Las Vergnas in \cite{LV77}, see also \cite{GiLV05} for more details in the context of graphs, and \cite{AB1, AB2-b} for more details in the context of hyperplane arrangements or oriented matroids).
}


%
The above expression of the Tutte polynomial in terms of beta invariants of minors 
thus refines at the same time the following known Tutte polynomial formulas:
\emevder{pourrait etre bien dans cette intro de redonner ces formules ? let us hughlight the enumerative coutnerpart in this introdiuction, though the paper is structural}
\begin{itemize}[-]
\partopsep=0mm \topsep=0mm \parsep=0mm \itemsep=1mm
\item The classical expression of the Tutte polynomial of a matroid in terms of basis activities, given by Tutte in \cite{Tu54} and extended to matroids by Crapo in \cite{Cr69} (recalled in Section \ref{sec:prelim} as the  \ref{eq:basis-activities} formula).
Indeed, by this classical expression, each coefficient of the Tutte polynomial counts the number of bases with given internal/external activity. 
By the above expression, 
each coefficient of the Tutte polynomial is decomposed further
in terms of numbers of bases of minors with internal/external activity equal to $1/0$ or $0/1$ (see also Theorem \ref{th:dec_base} and the proof of Theorem \ref{th:tutte} at the very end of the paper).
\emevder{pas tres bien dit ?}

\item The expression of the Tutte polynomial of an oriented matroid in terms of orientation activities, given by Las Vergnas in  \cite{LV84a} (recalled in \cite[Section \ref{b-sec:prelim}]{AB2-b} as the  \ref{b-eq:reorientation-activities} formula).
Indeed, by this expression, each coefficient of the Tutte polynomial amounts to count the number of reorientations with given dual/primal orientation activity. 
By the above expression, 
each coefficient of the Tutte polynomial 
is decomposed further 
in terms of numbers of reorientations of minors with dual/primal orientation activity equal to $1/0$ or $0/1$, that is 
in terms of numbers of bounded regions in minors of the primal and the dual with respect to a topological representation of the oriented matroid. See \cite{AB2-b} for details, notably \cite[Theorem \ref{b-th:dec-ori} and Remark \ref{b-rk:tutte}]{AB2-b}.
 


\item The convolution formula for the Tutte polynomial, recalled here as Corollary \ref{cor:convolution}, so named by Kook, Reiner and Stanton 
 in \cite{KoReSt99}. 
This formula was implicit in \cite{EtLV98}, as it  is 
a direct enumerative corollary of the structural decomposition of the set of bases into bases of minors with internal/external activity equal to zero, given by Etienne and Las Vergnas in \cite{EtLV98} (recalled here as Corollaries \ref{cor:dec-cyclic-flat} and \ref{th:EtLV98}).
One retrieves this formula from the above by considering only the subsets $F_c$ in the filtrations.
It expresses the Tutte polynomial in terms of Tutte polynomials of minors where either the variable $x$ or the variable $y$ is set to zero.
By the above expression, each Tutte polynomial of a minor involved in the convolution formula is further decomposed by means of a sequence of minors, thus using only the beta invariant of these minors (that is only the monomials $x$ or $y$ of the Tutte polynomial of these minors).
%
\end{itemize}

Let us  mention that an algebraic proof of the expression of the Tutte polynomial in terms of beta invariants of minors of Theorem \ref{th:tutte} 
could be obtained using the algebra of matroid set functions, a technique introduced 
by Lass in \cite{La97}, 
according to its author \cite{Lass-perso}. 

\bigskip

Finally, in the companion paper \cite{AB2-b},  No. 2.b of the same series, we use the above structural decomposition theorem of matroid bases (Theorem \ref{th:dec_base}),  along with a similar decomposition of oriented matroids (namely \cite[Theorem \ref{b-th:dec-ori}]{AB2-b}), and along with a bijection in the $1/0$ activity case from a previous paper, No. 1 \cite{AB1} (recalled in \cite[Section \ref{b-sec:bij-10}]{AB2-b}), to define the canonical active  bijection between orientations/signatures/reorientations and spanning trees/simplices/bases of a graph/real hyperplane arrangement/oriented matroid, as well as related bijections.

%
%
%
%
%
%
%
%
%
%

In brief, the active bijection for graphs, real hyperplane arrangements and oriented matroids  (in order of increasing generality)
is a framework introduced and studied in a series of papers by the present authors.
The canonical active bijection associates an oriented matroid on a linearly ordered ground set with one of its  bases. This defines an activity preserving correspondence between reorientations and bases of an oriented matroid, with numerous related bijections, constructions and characterizations. It yields notably a structural and bijective interpretation of the equality of the two expressions of the Tutte polynomial alluded to above: \ref{eq:basis-activities} by Tutte \cite{Tu54} and  \ref{b-eq:reorientation-activities} by Las Vergnas \cite{LV84a}.

\emevder{OU AVANT j'avais mis : The idea of decomposing bases as in the present paper has been inspired by an algorithm from Las Vergnas' paper \cite{LV84b} ... OU AUSSI Let us mention that the decomposition of matroid bases developed in the present paper has been initiated by an algorithm}
The idea of decomposing matroid bases developed in the present paper 
has been initiated 
by an algorithm by Las Vergnas in \cite{LV84b} 
(given in graphs without proof, 
and allegedly yielding a correspondence between orientations and spanning trees, 
different from the active bijection however, see \cite[footnote \ref{b-footnote:LV84}]{AB2-b}).
Most of the main results in this series (including the present paper) were given in the Ph.D. thesis \cite{Gi02} in a preliminary form. 
A short summary of the whole series (including the above Tutte polynomial formula) has been given in \cite{GiLV07}.
In the present paper, we will refer only to the journal papers \cite{GiLV05, AB1, AB2-b} of this series, the reader may see the companion paper \cite{AB2-b} for a complete overview and for further references from the authors and from the literature.
%

The reader primarily interested in graph theory may also read \cite{ABG2}, that gives a complete overview of the active bijection in the language of graphs (in contrast with other papers of the series), as well as a proof of the above Tutte polynomial expression in terms of beta invariants of minors by means of decomposing graph orientations (as done in \cite{AB2-b} for oriented matroids), instead of decomposing bases/spanning trees (as done in the present paper for matroids). This is possible in graphs since they are orientable, but this is not possible in non-orientable matroids.

\newpage
\section{Preliminaries}
\label{sec:prelim}






\noindent{\it Generalities.}

In the paper,  $\subseteq$ denotes the inclusion, $\subset$ denotes the strict inclusion,  and $\uplus$ (or $+$) denotes the disjoint union.
%
%
Usually, $M$ denotes a matroid on a finite set $E$. 
See \cite{Ox92} for a complete background on matroid theory, 
notably see \cite[Chapter 5]{Ox92} for the translation in terms of graphs, 
and \cite[Chapter 6]{Ox92} for the translation in terms of representable matroids, point configurations or real hyperplane arrangements.
%
A matroid  $M$ on $E$ can be called \emph{ordered} when the set $E$ is linearly ordered. Then, the dual $M^*$ of $M$ is ordered by the same ordering on $E$.
A minor $M/\{e\}$, resp. $M\bk\{e\}$, for $e\in E$, can be denoted for short $M/e$, resp. $M\bk e$.
A matroid can be called \emph{loop}, or \emph{isthmus}, if it has a unique element and this unique element is a loop ($M=U_{1,0}$), or an isthmus ($M=U_{1,1}$), respectively.
An isthmus is also called a coloop in the literature.
\emevder{changer partout isthmus en coloop ?}

\emenew{dessous inutile je crois}%


Let us first recall some usual matroid notions.
A \emph{flat} $F$ of $M$ is a subset of $E$ such that $E\setminus F$ is a union of cocircuits; equivalently: if $C\setminus \{e\}\subseteq F$ for some circuit $C$ and element $e$, then $e\in F$; and equivalently: $M/F$ has no loop.
A \emph{dual-flat} $F$ of $M$ is a subset of E which is a union of circuits; equivalently: its complement is a flat of the dual matroid $M^*$;  equivalently: if $D\setminus \{e\}\subseteq E\s F$ for some cocircuit $D$ and element $e$, then $e\in E\s F$; and equivalently: $M(F)$ has no isthmus.
A \emph{cyclic-flat} $F$ of $M$ is both a flat and a dual-flat of $M$;
 equivalently: $F$ is a flat and $M(F)$ has no isthmus; or equivalently: $M/F$ has no loop and $M(F)$ has no isthmus.
 
%

\bigskip


\noindent{\it Activities of matroid bases.}

Let $M$ be an ordered matroid on $E$, and let $B$ be  a basis of $M$.
For $b\in B$, the \emph{fundamental cocircuit} of $b$ with respect to $B$, denoted $C_M^*(B;b)$, or $C^*(B;b)$ for short,  is the  unique cocircuit contained in $(E\s B)\cup\{b\}$.
For $e\not\in B$, the \emph{fundamental circuit} of $e$ with respect to $B$, denoted $C_M(B;e)$, or $C(B;e)$ for short,
 is the unique circuit contained in $B\cup\{e\}$.
Let $$\Int(B)=\Bigl\{\ b\in B\ \mid\ b=\ \min \ \bigl(\ C^*(B;b)\ \bigr)\ \ \Bigr\},$$
$$\Ext(B)=\Bigl\{\ e\in E\setminus B\ \mid\ e=\ \min \ \bigl(\ C(B;e)\ \bigr)\ \ \Bigr\}.$$
We might add a subscript as $\Int_M(B)$ or $\Ext_M(B)$ when necessary.
The elements of $\Int(B)$, resp. $\Ext(B)$, are called \emph{internally active}, resp. \emph{externally active}, with respect to $B$. The cardinality of $\Int(B)$, resp. $\Ext(B)$ is called \emph{internal activity}, resp. \emph{external activity}, of $B$. 
We might write that a basis is \emph{$(i,j)$-active} when its internal and external activities equal $i$ and $j$, respectively.
Observe that $\Int(B)\cap \Ext(B)=\emptyset$ and that, for $p=\min (E)$,  we have $p\in \Int(B)\cup \Ext(B)$.

Moreover, let $B_{\min}$ be the smallest (lexicographic) base of $M$. 
 Then, as well-known and easy to prove, we have
  $\Int(B_{\min})=B_{\min}$, $\Ext(B_{\min})=\emptyset$, and $\Int(B)\subseteq B_{\min}$ for every base $B$.
Also, let $B_{\max}$ be the greatest (lexicographic) base of $M$. Then $\Int(B_{\max})=\emptyset$, $\Ext(B_{\max})=E\s B_{\max}$, and $\Ext(B)\subseteq E\s B_{\max}$ for every base $B$.
Thus, roughly, internal/external activities can be thought of as situating a basis with respect to $B_{\min}$ and $B_{\max}$.
Finally, we recall that internal and external activities are dual notions:
$$\Int_M(B)=\Ext_{M^*}(E\s B) \ \ \text{ and }\ \  \Ext_M(B)=\Int_{M^*}(E\s B).$$

By \cite{Tu54, Cr69}, the Tutte polynomial of $M$ is
\begin{equation*}
\tag{``enumeration of basis activities''}
\label{eq:basis-activities}
t(M;x,y)=\sum_{\io,\ep}b_{\io,\ep}x^\io y^\ep
\end{equation*}
where $b_{\io,\ep}$ is the number of bases of $M$ 
with internal activity $\io$ and external activity $\ep$.
It does not depend on the linear ordering of $E$.

\bigskip



Now, given a basis $B$ of $M$, if $\Int(B)=\emptyset$, resp.
$\Ext(B)=\emptyset$, then $B$ is called \emph{external}, resp. \emph {internal}.
If 
$\Int(B)\cup \Ext(B)=\{p\}$
then $B$ is called \emph{uniactive}.
Hence, a base with internal activity $1$ and external activity $0$ can be called uniactive internal, and a base with internal activity $0$ and external activity $1$ can be called uniactive external.
Let us mention that exchanging the two smallest elements of $E$ yields a canonical bijection between uniactive internal and uniactive external bases, see \cite[Proposition 5.1 up to a typing error%
\footnote{Let us correct here an unfortunate typing error in \cite[Proposition 5.1 and Theorem 5.3]{AB1}. The statement has been given under the wrong hypothesis 
$B_{\min}=\{p<p'<\dots\}$ 
instead of the correct one $E=\{p<p'<\dots\}$. Proofs are unchanged
(independent typo: in line 10 of the proof of Proposition 5.1, instead of $B'-f$, read $(E\setminus  B')\setminus\{f\}$).
In \cite[Section 4]{GiLV05}, the statement of the same property in graphs is correct.
}%
]{AB1}%
, see also \cite[Section 4]{GiLV05} in graphs.
\eme{remplacer partout ?}%
See the beginning of Section \ref{sec:dec-bases} for a reformulation of the characterization of uniactive internal/external bases (see also \cite[Proposition 2]{GiLV05} for another characterization, not used in the paper).\emevder{J'avais mis au debut de section 4 qu'il serait bien de remttre cette caraterrisation quelque part, mais maintenant j'an doute... a trancher !}


In particular, by the above formula, we have that  $b_{1,0}$
counts the number of uniactive internal bases. 
This number  does not depend on the linear ordering of the element set $E$. 
This value $$\beta(M)=b_{1,0}$$ is known as the \emph{beta invariant} of $M$ \cite{Cr67}. Assuming $\mid E\mid >1$, it is known that $\beta(M)\not=0$ if and only if $M$ is connected. Let us recall that, 
for a loopless graph $G$ with at least three vertices, the associated matroid $M(G)$ is connected if and only if $G$ is 2-connected. 
Also, we have $\beta(M)=b_{1,0}=b_{0,1}=\beta(M^*)$ as soon as   $\mid E\mid >1$.
Note that, assuming $\mid E\mid=1$, we have $\beta(M)=1$ if the single element is an isthmus of $M$,
and $\beta(M)=0$ if the single element is a loop of $M$.

%
%
Finally, for our constructions, we need to introduce the following dual slight variation $\beta^*$ of $\beta$:
$$
 \beta^*(M)=\beta(M^*)=b_{0,1}=\ 
\Biggr\{
\begin{array}{ll}
       \beta(M) &\text{ if }|E|>1 \\
       0 &\text{ if $M$ is an isthmus} \\
       1 &\text{ if $M$ is a loop.} 
\end{array}
$$

\bigskip

\noindent{\it Fundamental bipartite graph/tableau settings.}

\emenew{variatne dessous}%

Observe that the above definitions for a basis $B$ of an ordered matroid $M$ only rely upon the fundamental circuits/cocircuits of the basis, not on the whole structure $M$. 
In the paper, we develop a combinatorial construction that also only depends on this local data, and thus can be naturally expressed in terms of general bipartite graphs on a linearly ordered set of vertices. So let us introduce the following definitions and representations.
This is rather formal but necessary.

We call \emph{(fundamental) bipartite graph} $\F$ on $(B,E\s B)$ a bipartite graph on a set of vertices $E$, which is bipartite w.r.t. a couple of subsets of $E$ forming a bipartition $E=B\uplus E\s B$. 
We call \emph{(fundamental) tableau} $\F$ on $(B,E\s B)$ a matrix whose rows and columns are indexed by $E$, with entries in $\{\X,0\}$, and such that each diagonal element indexed by $(e,e)$, $e\in E$, is non-zero and, moreover, is the only non-zero entry of its row (when $e\in B$), or the only non-zero entry of its column (when $e\in E\s B$).
We use the same notation $\F$ for a bipartite graph or a tableau since,
obviously, bipartite graphs and tableaux are equivalent structures: each non-diagonal entry of the tableau represents an edge of the corresponding bipartite graph.
We choose to define both because graphs are the underlying compact combinatorial structure, whereas tableaux are better for visualization, notably for signs of the fundamental circuits/cocircuits in the oriented matroid case developed in the companion paper \cite{AB2-b}, and they are consistent with the matrix representation used in the linear programming setting of the active bijection developed 
in \cite{AB1}. 
In what follows (and in \cite{AB2-b} too), examples will be illustrated  on both representations. 

 \begin{figure} 
\centering
\includegraphics[scale=1.2]{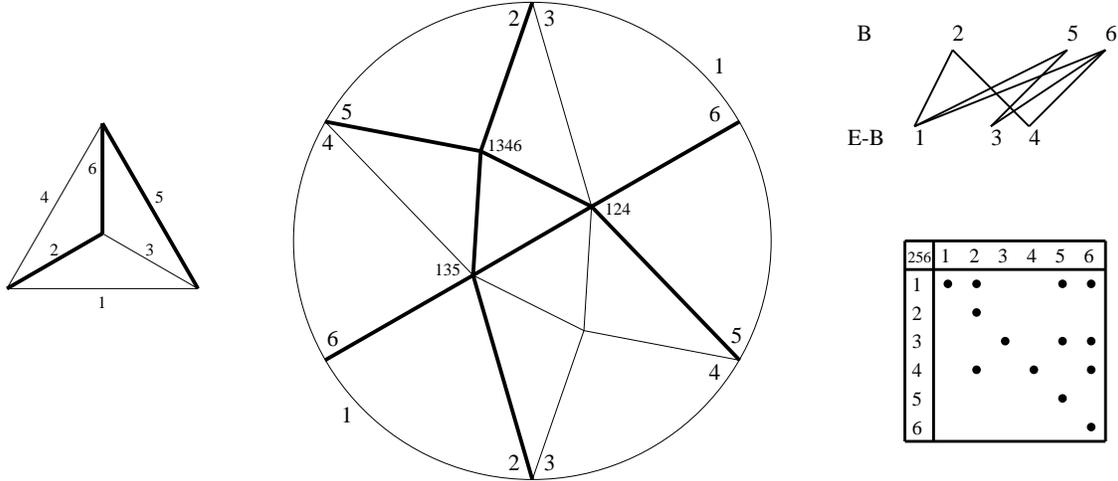}
\caption{For the base $256$ of the depicted matroid of 
 $K_4$ with ground set $1<\dots<6$, we have $\Int(256)=\emptyset$ and $\Ext(256)=\{1,3\}$. On the left: a graph representation. In the middle: a hyperplane arrangement representation (we represent $\min(E)$ as a hyperplane at infinity, and we only represent one half of the arrangement, on a given side of $\min(E)$, see \cite[Section 2]{AB1} for more details on such representations); and the vertices associated with fundamental cocircuits of the basis. On the upper right and the bottom right, respectively: the fundamental bipartite graph and the fundamental tableau of the basis (see last part of Section \ref{sec:prelim}).
}
\label{fig:K4exbase256}
\end{figure}

Given a basis $B$ of a matroid $M$ on $E$, the \emph{fundamental graph/tableau} of $B$ in $M$, denoted $\F_M(B)$ is the usual (fundamental) bipartite graph/tableau on $(B,E\s B)$ such that, for every $b\in B$, $b$ is adjacent to elements of $C^*(B;b)\s \{b\}$, 
and  for every $e\in E\s B$, $e$ is adjacent to elements of $C(B;e)\s \{e\}$.
Recall that $$e\in C^*(B;b)\text{ if and only if }b\in C(B;e).$$
%
%
In fact, every bipartite graph on $(B,E\s B)$ is the fundamental graph of some basis $B$ in some matroid $M$ on $E$: one just has to choose $B$ as a vector space basis, and settle elements $e$ of $E\s B$ in general position in the subspaces spanned by the elements of $B$ adjacent to $e$ (isthmuses correspond to isolated vertices in $B$ and loops correspond to isolated vertices in $E\s B$).

Observe that matroid duality comes down to exchange the roles of $B$ and $E\s B$, that is to exchange the two parts of the bipartition of $E$  (in the bipartite graph setting), or to transpose the matrix (in the tableau setting).
Precisely, for a bipartite graph/tableau $\F$ on $(B,E\s B)$, we define the dual $\F^*$ of  $\F$ as the bipartite graph/tableau on $(E\s B,B)$ with same edges/transposed values w.r.t. $\F$. Obviously, for a basis of a matroid $M$,  we have $\F_{M^*}(E\s B)=(\F_M(B))^*$.

Thus, the usual definitions and notations given above can be directly extended to bipartite graphs: for $b\in B$, $C^*(B;b)$ is the set of elements adjacent to $b$, plus $b$; for $e\in E\s B$, $C(B;e)$ is the set of elements adjacent to $e$, plus $e$; and, assuming $E$ is linearly ordered, an element is internally, resp. externally, active if it is in $B$ and it is the  smallest element of $C^*(B;b)$, resp. the smallest element of $C(B;e)$.
Similarly, those definitions translate in the tableau setting:
for $b\in B$, $C^*(B;b)$ is defined by the non-zero entries of the column indexed by $b$, or column $b$ for short; for $e\in E\s B$, $C(B;e)$ is defined by the non-zero entries of the row indexed by $e$, or row $e$ for short; and, assuming $E$ is linearly ordered, an element is internally, resp. externally, active if its corresponding diagonal element is the smallest non-zero entry of its column, resp. its row.
Then we can directly extend the notations  $\Int$ and $\Ext$, and the relative definitions, to those settings.

Finally, for $A\subseteq E$, we define $\F-A$ as the bipartite graph/tableau obtained by removing all vertices (and their incident edges)/lines in $A$ from $\F$.
For an element $e\in E$, we can denote $\F-e$ instead of $\F-\{e\}$. 


\begin{example}
\label{ex1}
{\rm
An example of a matroid basis, its internal/external activities, its (fundamental) bipartite graph and its (fundamental) tableau is given in Figure \ref{fig:K4exbase256}. Internal/external activities for all bases of this example are listed in Figure \ref{fig:tabK4} at the end of the paper.
}
\end{example}




%




\section{Filtrations of an ordered matroid, and Tutte polynomial in terms of beta invariants of minors induced by filtrations}
\label{sec:dec-seq}





First, we introduce filtrations of a matroid on a linearly ordered ground set, which are particular increasing sequences of subsets of the ground set and which will be continuously used throughout the paper.
Then, we introduce a formula for the Tutte polynomial of a matroid in terms of beta invariants of minors induced by filtrations. Its proof will be given at the very end of the paper, as a consequence of the structural decomposition of matroid bases with respect to basis activities, developed in the next section. Let us mention that, in the particular case of oriented matroids (or real hyperplane arrangements, or graphs, whose associated matroids are all orientable), this formula can be equally proved using a decomposition of oriented matroids with respect to orientation activities, using the same filtrations,
see 
\cite{AB2-b} (or \cite{ABG2} in graphs).

%

%
%

\begin{definition}
\label{def:general-filtration}
Let $E$ be a linearly ordered finite set.
Let $M$ be a matroid on $E$.
  We call \emph{filtration of $M$} (or $E$) 
  a 
sequence $(F'_\ep, \ldots, F'_0, F_c , F_0, \ldots, F_\io)$ of subsets of $E$
 such~that:%
\begin{itemize}
\itemsep=0mm
\partopsep=0mm 
\topsep=0mm 
\parsep=0mm
\item $\emptyset= F'_\ep\subset...\subset F'_0=F_c=F_0\subset...\subset F_\io= E$;
\item the sequence $\min(F_k\setminus F_{k-1})$, $1\leq k\leq\io$  is increasing with $k$;
\item the sequence  $\min(F'_{k-1}\setminus F'_k)$, $1\leq k\leq\ep$, is increasing with $k$.
\end{itemize}
The sequence is a \emph{connected filtration of $M$} if, in addition:
\emevder{OU : A filtration of $M$ is called \emph{connected (w.r.t. $M$)} if, in addition:}%
\begin{itemize}
\vspace{-1mm}
\itemsep=0mm
\partopsep=0mm 
\topsep=0mm 
\parsep=0mm
\item for $1\leq k\leq\io$, the minor $M(F_{k})/F_{k-1}$ is connected and is not a loop;
\item for $1\leq k\leq\ep$, the minor $M(F'_{k-1})/F'_k$ is connected and is not an isthmus.
\end{itemize}
\end{definition}

%
%



\emevder{dessous en commentaire ancienne def foireuse de conetced vec distinction isthme/loop un peu fausse je crois}

\eme{
To motivate this definition, let us recall that, for a matroid with at least two elements, $\beta(M)\not= 0$ if and only if the matroid of $M$ is connected, that is if and only if $M$ is loopless and 2-connected (see also forthcoming Lemma \ref{lem:dec-seq-equiv}). 
}%


In what follows, we can equally use the notations $(F'_\ep, \ldots, F'_0, F_c , F_0, \ldots, F_\io)$
or $\emptyset= F'_\ep\subset...\subset F'_0=F_c=F_0\subset...\subset F_\io= E$ to denote a filtration of $M$.
The $\io+\ep$ minors involved in Definition \ref{def:general-filtration} are said to be \emph{associated with} or \emph{induced by} the filtration.
The subset $F_c$ will be called \emph{the cyclic-flat} of the  
filtration when it is connected 
(a term justified by Lemma \ref{lem:des-seq-flats} below).
Observe that filtrations of $M$ are equivalent to pairs of partitions of $M$ formed by a bipartition obtained from the subset $F_c$ (with possibly one empty part, which is a slight language abuse) and a refinement of this bipartition:
$$E=F_c\uplus E\s F_c,$$
$$E= (F'_{\ep-1}\s F'_\ep)\ \uplus\ \dots \ \uplus\ (F'_{0}\s F'_1)\ \uplus\ (F_1\s F_0)\ \uplus\ \dots \ \uplus\ (F_{\io}\s F_{\io-1}).$$
Indeed, one can retrieve the sequence of nested subsets from the pair of partitions since  the subsets in the sequence are unions of parts given by the ordering of the smallest elements of the parts.

%
%
%
%



The next Lemma \ref{lem:dec-seq-beta} is used in the Tutte polynomial formula below.

\begin{lemma}
\label{lem:dec-seq-beta}
Let $M$ be an ordered matroid on $E$.
A filtration $\emptyset= F'_\ep\subset...\subset F'_0=F_c=F_0\subset...\subset F_\io= E$ of $M$ is a connected filtration of $M$ if and only if
$$\Bigl(\prod_{1\leq k\leq \io}
\beta \bigl( M(F_k)/F_{k-1}\bigr)\Bigr)
 \ \Bigl(\prod_{1\leq k\leq \ep}\beta^* \bigl( M(F'_{k-1})/F'_{k}\bigr)\Bigr)\ \not=\ 0.$$%
\end{lemma}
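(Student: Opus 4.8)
The plan is to reduce the statement to a factor-by-factor equivalence. First, recall from the preliminaries that $\beta$ and $\beta^*$ take nonnegative integer values. Since every inclusion in a filtration is strict, each difference $F_k\s F_{k-1}$ (for $1\le k\le\io$) and each difference $F'_{k-1}\s F'_k$ (for $1\le k\le\ep$) is nonempty, so each of the $\io+\ep$ minors induced by the filtration has a nonempty ground set. A product of nonnegative integers is nonzero if and only if each factor is nonzero; hence the displayed product is nonzero if and only if $\beta(M(F_k)/F_{k-1})\ne 0$ for every $1\le k\le\io$ and $\beta^*(M(F'_{k-1})/F'_k)\ne 0$ for every $1\le k\le\ep$. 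It therefore suffices to match each of these factorwise conditions with the corresponding requirement in the definition of a connected filtration.

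For the internal factors, I would fix $k$ and set $N=M(F_k)/F_{k-1}$, a matroid on the nonempty set $F_k\s F_{k-1}$, and show that $\beta(N)\ne 0$ if and only if $N$ is connected and not a loop. If $|F_k\s F_{k-1}|>1$, then the characterization recalled in the preliminaries gives $\beta(N)\ne 0$ if and only if $N$ is connected, while $N$ cannot be a loop since a loop has a single element; so both sides read ``$N$ connected''. If $|F_k\s F_{k-1}|=1$, then $N$ is a single-element matroid, hence automatically connected, and the single-element values of $\beta$ give $\beta(N)\ne 0$ if and only if $N$ is an isthmus, that is, if and only if $N$ is not a loop; so both sides read ``$N$ not a loop''. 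In either case the two conditions agree, which is exactly the first clause of the connectedness requirement.

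The external factors are treated dually. Setting $N'=M(F'_{k-1})/F'_k$ and using $\beta^*(N')=\beta((N')^*)$ together with the single-element values of $\beta^*$, the same case split (with the roles of loop and isthmus exchanged) shows that $\beta^*(N')\ne 0$ if and only if $N'$ is connected and not an isthmus, matching the second clause of the definition. Collecting the two paragraphs yields the claimed equivalence. The only genuinely delicate point is the single-element minors: there ``connected'' alone carries no information (every one-element matroid is connected), so the equivalence forces the definition of a connected filtration to read ``connected and not a loop'' (resp.\ ``connected and not an isthmus'') rather than merely ``connected'', which is precisely how Definition~\ref{def:general-filtration} is phrased.
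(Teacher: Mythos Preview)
Your proof is correct and follows essentially the same approach as the paper's own proof: reduce to a factor-by-factor analysis using the characterization $\beta(N)\ne 0\iff N$ connected for $|N|>1$, together with the single-element values of $\beta$ and $\beta^*$. The paper's version is much terser (it simply records these facts and declares the result ``direct''), while you spell out the case split explicitly, but the underlying argument is the same.
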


\begin{proof}
The result is direct.
For a matroid $M$ with at least two elements, we have $\beta(M)\not=0$ if and only if $M$ is connected, and, according to Section \ref{sec:prelim}, we have $\beta(M)=\beta^*(M)$. Moreover, we have $\beta(M)=1$ and $\beta^*(M)=0$ if $M$ is an isthmus, and $\beta(M)=0$ and $\beta^*(M)=1$ if $M$ is a loop.
\end{proof}

We give the next Lemma \ref{lem:des-seq-flats} for the intuition and information, but it is  not practically used thereafter. 

\begin{lemma}
\label{lem:des-seq-flats}
A connected filtration $(F'_\ep, \ldots, F'_0, F_c , F_0, \ldots, F_\io)$ of an ordered matroid $M$ satisfies:%
\vspace{-1mm}
\begin{itemize}
\itemsep=0mm
\item for every $0\leq k\leq\io$, the subset $F_k$ is a flat of $M$;
\item for every $0\leq k\leq\ep$, the subset $F'_k$ is a dual-flat of $M$;
\item the subset $F_c$ is a cyclic-flat of $M$. 
\end{itemize}
\end{lemma}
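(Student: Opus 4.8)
The plan is to establish the three claims in order, proving the \emph{flat} statement directly and then deducing the \emph{dual-flat} statement from it by matroid duality; the cyclic-flat claim will then follow for free. The one structural fact I will lean on is that a connected matroid which is not a single loop has no loops at all (a loop in a matroid with at least two elements is a separator, so the minor $M(F_k)/F_{k-1}$, being by hypothesis either an isthmus or a connected matroid on at least two elements, is loopless), together with the dual statement that a connected matroid which is not a single isthmus has no isthmus.

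First I would show that $F_k$ is a flat of $M$ for each $0\le k\le\io$, using the characterization ``$F$ is a flat $\iff M/F$ has no loop''. Suppose not: then some $e\in E\s F_k$ is a loop of $M/F_k$, i.e.\ $e\in\mathrm{cl}_M(F_k)$. Let $j$ be the index with $e\in F_j\s F_{j-1}$; since $e\notin F_k$ we have $j\ge k+1$, hence $F_k\subseteq F_{j-1}$. By monotonicity of the closure, $e\in\mathrm{cl}_M(F_{j-1})\s F_{j-1}$, so $e$ is a loop of $M/F_{j-1}$, and restricting to $F_j$ (which contains $e$) shows that $e$ is still a loop of $M(F_j)/F_{j-1}$. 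This contradicts the hypothesis that $M(F_j)/F_{j-1}$ is connected and not a loop, hence loopless. (The case $k=\io$ is trivial since $F_\io=E$.)

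Next I would derive the dual-flat statement by passing to $M^*$. Complementing and reversing the chain turns the given filtration into the sequence $\emptyset=E\s F_\io\subset\cdots\subset E\s F_0=E\s F_c=E\s F'_0\subset\cdots\subset E\s F'_\ep=E$, and I claim this is a connected filtration of $M^*$ in which the sets $E\s F'_k$ play the role of the upper ($F$) subsets. The nesting and the two ``increasing $\min$'' conditions transfer directly, because the successive differences of the reversed chain are exactly the original sets $F'_{k-1}\s F'_k$ and $F_k\s F_{k-1}$. The connectivity conditions transfer via the minor-duality identity $\bigl(M(A)/B\bigr)^*=M^*(E\s B)/(E\s A)$ for $B\subseteq A$, together with the fact that duality exchanges loops and isthmuses: thus $M^*(E\s F'_k)/(E\s F'_{k-1})=\bigl(M(F'_{k-1})/F'_k\bigr)^*$ is connected and not a loop, while $M^*(E\s F_{k-1})/(E\s F_k)=\bigl(M(F_k)/F_{k-1}\bigr)^*$ is connected and not an isthmus, exactly as the definition of a connected filtration of $M^*$ requires. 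Applying the already proved ``flat'' part to $M^*$ and this filtration, each $E\s F'_k$ is a flat of $M^*$, i.e.\ each $F'_k$ is a dual-flat of $M$, for $0\le k\le\ep$.

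Finally, $F_c=F_0=F'_0$ is simultaneously a flat (the $k=0$ instance of the first part) and a dual-flat (the $k=0$ instance of the second part), hence a cyclic-flat, which is the last assertion. I expect the only delicate point to be the bookkeeping in the duality step---matching the indices correctly, checking the two $\min$-conditions survive the reversal, and invoking the minor-duality formula on the right subsets---rather than any genuine mathematical difficulty; the actual content is concentrated in the short contradiction argument of the first part.
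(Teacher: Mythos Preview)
Your proof is correct. For the flat part your argument is essentially the paper's: both locate the index $j$ with $e\in F_j\setminus F_{j-1}$ and deduce that $e$ is a loop of $M(F_j)/F_{j-1}$, contradicting the connectivity hypothesis; you phrase this via the closure operator while the paper writes it with circuits, but the content is identical.

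For the dual-flat part you take a mildly different route. The paper simply rewrites the same contradiction argument with cocircuits, producing an isthmus of some $M(F'_{j-1})/F'_j$. You instead verify that the complemented and reversed chain is a connected filtration of $M^*$ and apply the already-proved flat part there. This is valid, and is in fact the content of the observation immediately following the lemma in the paper (Observation~\ref{lem:dec-seq-observation}). The paper's direct rewrite is shorter because it sidesteps the index bookkeeping; your version makes the duality structurally explicit and reuses the first half rather than redoing it. Both routes are routine once the first part is in hand.
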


\begin{proof}
Assume there exists $k$, $1\leq k\leq \io$, such that $F_{k-1}$ is not a flat.
By definition, there exists an element $e$ and a circuit  $C$ of $M$ such that $e\not\in F_{k-1}$ and $C\s \{e\}\subseteq F_{k-1}$.
Let $j$ be the largest integer such that $e\not\in F_{j-1}$. We have $j\geq k$, $C\s \{e\}\subseteq F_{j-1}$ since $j\geq k$, 
$e\not\in F_{j-1}$, and $e\in F_{j}$ by maximality of $j$.
So, $C\s F_{j-1}=\{e\}$ is a circuit of $M(F_j)/F_{j-1}$, 
that is $e$ is a loop of $M(F_j)/F_{j-1}$, contradiction. 

Dually, assume there exists $k$, $1\leq k\leq \ep$, such that $F'_{k-1}$ is not a dual-flat.
By definition, there exists an element $e$ and a cocircuit  $D$ of $M$ such that $e\in F'_{k-1}$ and $D\s \{e\}\subseteq E\s F'_{k-1}$.
Let $j$ be the largest integer such that $e\in F'_{j-1}$. We have $j\geq k$, $D\s \{e\}\subseteq E\s F'_{j-1}$ since $j\geq k$, 
$e\in F'_{j-1}$, and $e\not\in F'_{j}$ by maximality of $j$.
So, $D\cap F'_{j-1}=\{e\}$ is a cocircuit of $M'(F_{j-1})/F'_{j}$, that is $e$ is an isthmus of $M'(F_{j-1})/F'_{j}$, contradiction. 

Finally $F_c=F=0=F'_0$ is a cyclic flat as it is both a flat and a dual-flat.
%
%
%
\end{proof}


\begin{observation}
\label{lem:dec-seq-observation}
Let $\emptyset= F'_\ep\subset...\subset F'_0=F_c=F_0\subset...\subset F_\io= E$ be a connected filtration of an ordered matroid $M$. We have the following properties.
\begin{itemize}
\itemsep=0mm
\partopsep=0mm 
\topsep=0mm 
\parsep=0mm
\item $\emptyset= E\s F_\io\subset...\subset E\s F_0=E\s F_c=E\s F'_0\subset...\subset E\s F'_\ep= E$ is a connected filtration of~$M^*$, for the cyclic-flat $E\s F_c$ of $M^*$.
\item The minors associated with the above  filtration of $M^*$ are the duals of the minors associated with the above filtration of $M$. That is, precisely:
for every $1\leq k\leq\io$, 

\centerline{$\bigl( M(F_{k})/F_{k-1}\bigr)^*=M^*(E\s F_{k-1})/(E\s F_k),$}
and for every $1\leq k\leq\ep$, 

\centerline{$\bigl( M(F'_{k-1})/F'_k\bigr)^*=M^*(E\s F'_{k})/(E\s F'_{k-1}).$}
\item $\emptyset= F'_\ep\subset...\subset F'_0=F_c=F_c$ is a connected filtration of $M(F_c)$, for the cyclic-flat $F_c$ of $M(F_c)$.
\item $\emptyset=\emptyset= F_0\s F_c\subset...\subset F_\io\s F_c=E\s F_c$ is a connected filtration of $M/F_c$, for the cyclic-flat $\emptyset$ of $M/F_c$.
\end{itemize}
\end{observation}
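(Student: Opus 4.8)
The four statements are all consequences of standard matroid minor--duality identities together with careful re-indexing, so the plan is to establish the minor identity of the second item first and then deduce the others from it. The cornerstone is the classical identity
$$\bigl(M(F)/G\bigr)^*=M^*(E\s G)/(E\s F)\qquad(G\subseteq F\subseteq E),$$
which I would derive from $M(F)=M\s(E\s F)$, the rules $(N\s X)^*=N^*/X$ and $(N/Y)^*=N^*\s Y$, and the fact that deletion and contraction of disjoint sets commute: this gives $(M(F)/G)^*=(M^*/(E\s F))\s G=(M^*\s G)/(E\s F)$, which is exactly $M^*(E\s G)/(E\s F)$. Applying this with $(F,G)=(F_k,F_{k-1})$ and with $(F,G)=(F'_{k-1},F'_k)$ yields the two displayed equalities of the second item verbatim.

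For the first item, I would relabel the dual sequence as a candidate filtration $(G'_{\io},\dots,G'_0,G_c,G_0,\dots,G_\ep)$ of $M^*$ by setting $G_m=E\s F'_m$ for the increasing part, $G'_m=E\s F_m$ for the decreasing part, and $G_c=E\s F_c$; note that the roles of $\io$ and $\ep$ are exchanged. The nested inclusions are immediate by complementation, and the two monotonicity conditions transfer directly because $G_k\s G_{k-1}=F'_{k-1}\s F'_k$ and $G'_{k-1}\s G'_k=F_k\s F_{k-1}$, so the required minima coincide with those already assumed increasing for $M$. To upgrade this to a \emph{connected} filtration, I would invoke the second item: each minor of the dual sequence is the dual of a minor of the original connected filtration. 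Since connectedness is self-dual and duality exchanges loops with isthmuses, a minor that is connected and not a loop dualizes to one that is connected and not an isthmus, and vice versa; this is precisely the matching needed, the ``internal'' conditions for $M$ becoming the ``external'' conditions for $M^*$ and conversely. That $E\s F_c$ is a cyclic-flat of $M^*$ follows from Lemma~\ref{lem:des-seq-flats} and the fact that the complement of a cyclic-flat is a cyclic-flat of the dual.

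For the third and fourth items I would use the commutation of restriction and contraction to identify the induced minors. For the third item, since $F'_{k-1}\subseteq F_c$, restriction of a restriction gives $M(F_c)(F'_{k-1})=M(F'_{k-1})$, hence $M(F_c)(F'_{k-1})/F'_k=M(F'_{k-1})/F'_k$, which is connected and not an isthmus by hypothesis; the monotonicity condition is inherited unchanged and the increasing part is empty. For the fourth item, writing $K_m=F_m\s F_c$, the same commutation gives $(M/F_c)(F_k\s F_c)=M(F_k)/F_c$ and then $(M/F_c)(K_k)/K_{k-1}=M(F_k)/F_{k-1}$, which is connected and not a loop; the minima $\min(K_k\s K_{k-1})=\min(F_k\s F_{k-1})$ are increasing, and the primed part is empty. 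That $F_c$ is a cyclic-flat of $M(F_c)$ and that $\emptyset$ is a cyclic-flat of $M/F_c$ both follow from Lemma~\ref{lem:des-seq-flats}: $M(F_c)$ has no isthmus and $M/F_c$ has no loop because $F_c$ is a cyclic-flat of $M$.

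I expect no conceptual difficulty; the only real hazard is the bookkeeping in the first item, namely keeping the complementation and the swap of $\io$ and $\ep$ consistent so that the increasing part of the $M^*$-filtration genuinely comes from the $F'$-sequence of $M$ and the decreasing part from the $F$-sequence, and checking that the monotonicity of the minima is preserved under complementation rather than reversed. I would therefore write out the index dictionary explicitly before verifying the axioms.
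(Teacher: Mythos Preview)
Your proposal is correct. In the paper this statement is presented as an Observation without proof, the authors treating it as immediate from standard minor/duality identities; your write-up supplies exactly the routine verification they omit, and the bookkeeping (swap of $\io$ and $\ep$, identification of $G_k\setminus G_{k-1}$ with $F'_{k-1}\setminus F'_k$, preservation of the loop/isthmus exclusions under duality) is handled accurately.
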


\begin{thm}
\label{th:tutte}
Let $M$ be a matroid on a linearly ordered set $E$. We have
$$t(M;x,y)= \ \ \sum \ \ \Bigl(\prod_{1\leq k\leq \io}
\beta \bigl( M(F_k)/F_{k-1}\bigr)\Bigr)
 \ \Bigl(\prod_{1\leq k\leq \ep}\beta^* \bigl( M(F'_{k-1})/F'_{k}\bigr)\Bigr)\ {x^\io\  y^\ep}$$
\noindent 
where the sum can be equally:
\begin{itemize}
\item over all connected filtrations 
$\emptyset= F'_\ep\subset...\subset F'_0=F_c=F_0\subset...\subset F_\io= E$
of $M$;
\item or over all filtrations 
$\emptyset= F'_\ep\subset...\subset F'_0=F_c=F_0\subset...\subset F_\io= E$
of $E$.
\end{itemize}
\end{thm}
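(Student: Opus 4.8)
The plan is to deduce this enumerative identity from the structural decomposition of the set of bases (Theorem~\ref{th:dec_base}) together with the classical expansion \ref{eq:basis-activities} of the Tutte polynomial in terms of basis activities. First I would dispose of the equivalence of the two summation ranges. By Lemma~\ref{lem:dec-seq-beta}, a filtration is connected if and only if the product $\bigl(\prod_{1\leq k\leq\io}\beta(M(F_k)/F_{k-1})\bigr)\bigl(\prod_{1\leq k\leq\ep}\beta^*(M(F'_{k-1})/F'_k)\bigr)$ is nonzero. Hence every non-connected filtration contributes a zero term, and the sum over all filtrations of $E$ coincides with the sum restricted to connected filtrations of $M$. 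It therefore suffices to establish the formula for the connected-filtration range.

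The main step is an enumerative translation of Theorem~\ref{th:dec_base}. By \ref{eq:basis-activities} we have $t(M;x,y)=\sum_{\io,\ep}b_{\io,\ep}x^\io y^\ep$, where $b_{\io,\ep}$ counts the bases with internal activity $\io$ and external activity $\ep$; so it is enough to show, for each pair $(\io,\ep)$, that $b_{\io,\ep}$ equals the sum of the displayed products over all connected filtrations whose upper part has $\io$ steps and whose lower part has $\ep$ steps. Here I would invoke the fact that every basis $B$ has a unique active filtration (Theorem~\ref{thm:unique-dec-seq-bas}) and that, by Theorem~\ref{th:dec_base}, the set of all bases is canonically partitioned according to this filtration; moreover, for a fixed connected filtration, the bases admitting it as their active filtration are in bijection with the tuples consisting of one uniactive internal basis of each upper minor $M(F_k)/F_{k-1}$ and one uniactive external basis of each lower minor $M(F'_{k-1})/F'_k$. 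Since the number of uniactive internal bases of a minor is precisely its beta invariant, and the number of uniactive external (that is, $(0,1)$-active) bases is $\beta^*$ by the definitions recalled in Section~\ref{sec:prelim} (and this holds even for single-element isthmus/loop minors, where the counts are $1$ and $0$ as appropriate), the number of bases with a given active filtration equals exactly the displayed product. Finally, a basis so decomposed has $\Int$ of cardinality equal to the number $\io$ of upper minors and $\Ext$ of cardinality equal to the number $\ep$ of lower minors, so it contributes to $b_{\io,\ep}$ and carries the monomial $x^\io y^\ep$. Summing over all connected filtrations and matching the coefficient of $x^\io y^\ep$ yields the claim.

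The main obstacle is entirely absorbed into Theorem~\ref{th:dec_base}: once the canonical partition of the bases into uniactive bases of minors induced by filtrations is available, together with the fact that the internal and external activities of $B$ are read off as the numbers of upper and lower steps of its active filtration, the present statement reduces to the routine counting argument above. The only points that genuinely require the strength of Theorem~\ref{th:dec_base} are that the reconstruction map (assembling a basis from a connected filtration and a choice of uniactive internal/external bases of its minors) is a well-defined bijection onto the bases having that active filtration, and that the activities add up additively along the filtration; both are exactly what Theorem~\ref{th:dec_base} provides, so no further work beyond the bookkeeping is needed here.
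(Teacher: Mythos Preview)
Your proposal is correct and follows essentially the same approach as the paper: first invoke Lemma~\ref{lem:dec-seq-beta} to reduce to connected filtrations, then read the formula off as the direct enumerative translation of Theorem~\ref{th:dec_base}, using that $\beta$ (resp.\ $\beta^*$) counts uniactive internal (resp.\ external) bases of each minor and that the activities of $B$ coincide with the numbers $\io,\ep$ of steps in its active filtration. The paper's own proof is organized in exactly this way, with the same bookkeeping.
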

%

%

The fact that the sum in Theorem \ref{th:tutte} can be equally made over the two types of sequences directly comes from Lemma \ref{lem:dec-seq-beta}: non-zero terms in the second sum correspond to connected filtrations. The proof that the sum yields the Tutte polynomial is postponed at the very end of
Section \ref{sec:dec-bases}, since it is derived from the main result of the paper, namely Theorem \ref{th:dec_base}.
%
See the introduction of the paper for comments on how the Tutte polynomial formula given in Theorem  \ref{th:tutte} refines other known formulas.
Let us detail in the corollary below how Theorem  \ref{th:tutte} refines the convolution formula for the Tutte polynomial.

\begin{cor}[\cite{EtLV98, KoReSt99}]
\label{cor:convolution}
Let $M$ be a matroid. We have
$$t(M;x,y)=\sum t(M/F_c;x,0)\ t(M(F_c);0,y)$$ where the sum can be either over all subsets $F_c$ of $E$, or over all cyclic flats  $F_c$ of $M$.
\end{cor}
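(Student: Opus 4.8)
The plan is to derive the convolution formula directly from Theorem~\ref{th:tutte} by grouping the filtrations of $E$ according to their middle term $F_c$. Each filtration $\emptyset= F'_\ep\subset\dots\subset F'_0=F_c=F_0\subset\dots\subset F_\io= E$ splits into an upper chain $F_c=F_0\subset\dots\subset F_\io=E$ and a lower chain $\emptyset=F'_\ep\subset\dots\subset F'_0=F_c$, and conversely any such pair sharing the common end $F_c$ reassembles into a filtration of $E$. First I would check that this splitting respects the two increasing-minimum conditions of Definition~\ref{def:general-filtration}: the condition on the subsets $F_k$ involves only elements of $E\s F_c$, while the condition on the subsets $F'_k$ involves only elements of $F_c$, so the two chains are constrained independently once $F_c$ is fixed.

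Next, following the construction in the last two items of Observation~\ref{lem:dec-seq-observation}, I would view the upper chain, shifted by $F_c$, as a filtration $\emptyset=F_0\s F_c\subset\dots\subset F_\io\s F_c=E\s F_c$ of $M/F_c$ with cyclic-flat $\emptyset$ (that is, with $\ep=0$), and the lower chain as a filtration $\emptyset=F'_\ep\subset\dots\subset F'_0=F_c$ of $M(F_c)$ with cyclic-flat $F_c$ (that is, with $\io=0$). The key bookkeeping step is the factorization of the weight: here I would invoke the standard minor identities $(M/F_c)(F_k\s F_c)/(F_{k-1}\s F_c)=M(F_k)/F_{k-1}$ for $F_c\subseteq F_{k-1}\subseteq F_k$, and $M(F_c)(F'_{k-1})/F'_k=M(F'_{k-1})/F'_k$ for $F'_{k-1}\subseteq F_c$, so that the product of the $\beta$'s over the upper chain is exactly the weight attached to the corresponding filtration of $M/F_c$, and likewise the product of the $\beta^*$'s over the lower chain is the weight attached to the corresponding filtration of $M(F_c)$.

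With this, for a fixed $F_c$ the total contribution of all filtrations having that middle term factors as a product of two independent sums. Applying Theorem~\ref{th:tutte} to $M/F_c$ and setting $y=0$ isolates exactly the filtrations with $\ep=0$ (the empty product of $\beta^*$'s being $1$), so the first sum equals $t(M/F_c;x,0)$; symmetrically, applying Theorem~\ref{th:tutte} to $M(F_c)$ and setting $x=0$ isolates the filtrations with $\io=0$ and gives $t(M(F_c);0,y)$. Summing over all subsets $F_c\subseteq E$ then reproduces the entire sum of Theorem~\ref{th:tutte}, which yields the first form of the corollary.

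Finally I would pass to the sum over cyclic flats only. For this I would observe that if $F_c$ is not a flat of $M$ then $M/F_c$ has a loop, whence $t(M/F_c;x,0)=0$, and dually if $F_c$ is not a dual-flat of $M$ then $M(F_c)$ has an isthmus, whence $t(M(F_c);0,y)=0$; since a cyclic flat is exactly a subset that is both a flat and a dual-flat, every term indexed by a non-cyclic-flat subset vanishes and the two sums coincide. I expect the only real care to be needed in the factorization of the weight and in checking that the increasing-minimum conditions split cleanly across $F_c$; the remainder is a direct regrouping of the sum in Theorem~\ref{th:tutte}.
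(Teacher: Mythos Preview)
Your proposal is correct and follows essentially the same approach as the paper's proof: both split the sum in Theorem~\ref{th:tutte} according to the middle term $F_c$, identify the upper and lower chains with filtrations of $M/F_c$ and $M(F_c)$ via Observation~\ref{lem:dec-seq-observation}, apply Theorem~\ref{th:tutte} to these minors with $y=0$ and $x=0$ respectively, and handle the passage between all subsets and cyclic flats by the loop/isthmus vanishing argument. The only cosmetic difference is that the paper works through connected filtrations first and then extends to all subsets, whereas you work over all subsets first and then restrict; your ordering is arguably a bit cleaner since it avoids tracking connectedness through the splitting.
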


\begin{proof}
By fixing $y=0$ in Theorem \ref{th:tutte}, we get

\centerline{$\displaystyle t(M;x,0)= \ \ \sum \ \ \Bigl(\prod_{1\leq k\leq \io}
\beta \bigl( M(F_k)/F_{k-1}\bigr)\Bigr)
 \  {x^\io}$}
\noindent where the sum is over all (connected) filtrations where the susbet $F_c$ satisfies $F_c=\emptyset$, that is of the type $\emptyset=F'_0= F_c = F_0\subset \ldots\subset F_\io=E$ of $M$.
By fixing $x=0$, we get

\centerline{$\displaystyle t(M;0,y)= \ \ \sum \ \ 
 \ \Bigl(\prod_{1\leq k\leq \ep}\beta^* \bigl( M(F'_{k-1})/F'_{k}\bigr)\Bigr)\ {y^\ep}$}
\noindent where the sum is over all (connected) filtrations where the susbet $F_c$ satisfies $F_c=E$, that is of the type $\emptyset=F'_\ep\subset \ldots\subset F'_0= F_c = F_0=E$ of $M$.
Then, by decomposing the sum in Theorem \ref{th:tutte} as $\sum_{F_c}\sum_{i,j}  \Pi_{1\leq k\leq \io}\dots\Pi_{1\leq k\leq \ep}\dots$, and by the fact that connected filtrations of $M/F_c$ and $M(F_c)$ are directly induced by that of $M$, as shown in Observation \ref{lem:dec-seq-observation}, 
we get the formula
$t(M;x,y)=\sum t(M/F_c;x,0)\ t(M(F_c);0,y)$ where the sum is over all cyclic flats  $F_c$ of $M$.
If $F_c$ is not a cyclic flat, then either $M/F_c$ has a loop or $M(F_c)$ has an isthmus, implying that the corresponding term in the sum equals zero.
\end{proof}


\section{Decomposition of matroid bases  into uniactive internal/external bases of minors (and underlying decomposition of a general bipartite graph)}
\label{sec:dec-bases}


\emevder{OU (and related decomposition of a general bipartite graph)}%

We begin with giving some properties of the fundamental graph $\F_M(B)$ of a basis $B$ in a matroid~$M$.
Next, we define an \emph{active closure} operation that can be applied on such a fundametnal graph, and in fact on any (fundamental) bipartite graph/tableau (see end of Section \ref{sec:prelim}), as it depends only on this local graph, not on the whole matroid structure.
Next, we give a few useful combinatorial lemmas to characterize or to build this operation, they also only rely on the bipartite graph structure.
Then, we essentially apply this operation in a matroid setting to build decompositions of a matroid basis.
First, we recall and develop a decomposition into two so-called \emph{internal} and \emph{external} bases of minors, a construction introduced in \cite{EtLV98}. 
Finally, we build a decomposition, that refines the above one, into a sequence of uniactive internal/external bases of minors, in terms of connected filtrations introduced in Section \ref{sec:dec-seq}, yielding 
 by the way a proof of Theorem \ref{th:tutte}.
\bigskip


Let us first recall that, from Section \ref{sec:prelim},
given a linearly ordered set $E$,
 a bipartite graph/tableau $\F$
on $(B,E\s B)$, or equivalently a basis $B$ of a matroid $M$ on $E$ with fundamental graph $\F=\F_M(B)$, is uniactive when the following property holds: for all $b\in B\s \min(E)$, we have $b\not=\min(C^*_M(B;b))$, that is $\min(C^*_M(B;b))\in E\s B$, and, moreover, for all $e\in (E\s B)\s \min(E)$, we have $e\not=\min(C_M(B;e))$, that is $\min(C_M(B;e))\in B$. Then, under these conditions, it is internal, resp. external, if $\min(E)$ is internally active, that is $\min(E)\in B$, resp. if $\min(E)$ is  externally active, that is $\min(E)\in E\s B$.

%
%
%


\begin{property} 
\label{pty:fund_graph}
Let $B$ be a basis of a matroid $M$ on $E$.
For $b\in B$, we have $$\F_M(B)-b=\F_{M/b}({B-b}).$$
For $e\in E\s B$, we have $$\F_M(B)-e=\F_{M\s e}({B}).$$
\endproof
\vspace{-7mm}
\hfill\square
\end{property}

%

%

%
%

\begin{property} 
\label{pty:fund_graph_flat}
Let  $B$ be a basis of a matroid $M$ on $E$. Let $F\subseteq E$.
The following properties are equivalent:
\begin{enumerate}[(i)]
\item  \label{pty-it1} $B\cap F$ is a basis of $M(F)$;
\item \label{pty-it2} $B\s F$ is a basis of $M/F$;
\item \label{pty-it3} for all $b\in B\s F$, we have $C^*_M(B;b)\cap F=\emptyset$;
\item \label{pty-it4} for all $e\in F\s B$, we have $C_M(B;e)\subseteq F$.
\end{enumerate}
If the above properties are satisfied, we have:
$$\F_M(B)-F=\F_{M/F}({B\setminus F});$$
$$\F_M(B)-(E\s F)=\F_{M(F)}({B\cap F}).$$
Moreover, if both $F\subseteq E$ and $G\subseteq E$ satisfy the above properties, and $F\subseteq G$, then
$B\cap (G\s F)$ is a basis of $M(G)/F$, and
$$\F_M(B)-\bigl((E\s F)\cup G\bigr)=\F_{M(G)/F}\bigl(B\cap (G\s F)\bigr).$$
\end{property}


\begin{proof}
The fact that (\ref{pty-it1}) implies (\ref{pty-it2})  comes directly  from the following usual property:  for every basis $B'$ of $M(F)$,  $B\s F$ is a basis of $M/F$ if and only if $B'\uplus (B\s F)$ is a basis of $M$.
Then, the inverse implication comes directly from duality.
The equivalence between (\ref{pty-it3}) and (\ref{pty-it4}) comes directly from:
$b\in C_M(B;e)$ if and only if $e\in C^*_M(B;b)$. 
The equivalence between (\ref{pty-it1}) and (\ref{pty-it4})  comes directly from the fact that the two properties are equivalent to: $B\cap F$ is a spanning set in $M(F)$ (since $B\cap F$ is independant in $M(F)$).

%

Now let us assume that those properties are satisfied.
Since  $B\cap F$ is a basis of $M(F)$, the elements of $F\s B$ are loops in $M(F)/(B\cap F)$, and hence loops in $M/(B\cap F)$.
Contracting or deleting loops (or isthmuses) in a matroid yields the same result. Hence, $M/F=M/(B\cap F)\s (F\s B)$.
Hence,
with Property \ref{pty:fund_graph}, 
we get $\F_M(B)-F=\F_{M/F}({B\setminus F})$.
Now, if we delete from $M$ the elements of $(E\s F)\s B$, then the elements of
$(E\s F)\cap B$ become isthmuses and we conclude the same way to get $\F_M(B)-(E\s F)=\F_{M(F)}({B\cap F})$.

Finally, let us assume that the above properties are satisfied for $F$ and $G$ with $F\subseteq G\subseteq E$.
As seen above, we have that $B\cap G$ is a basis of $M(G)$.
We also have that for all $b\in B\s F$, we have $C^*_M(B;b)\cap F=\emptyset$.
This implies in particular that  for all $b\in B\cap G\s F$, we have $C^*_M(B;b)\cap G\cap F=\emptyset$. Since $B\cap G$ is a basis of $M(G)$, we obtain $C^*_{M(G)}(B\cap G;b)\cap G\cap F=\emptyset$. This implies, by the above equivalence applied to $B\cap F$ in $M(F)$, that $B\cap G\s F$ is a basis of $M(G)/F$.
\end{proof}

The above Property \ref{pty:fund_graph_flat} will often be used in what follows, possibly without reference, to translate properties from bipartite graphs to matroid bases and conversely, and to relate the fundamental circuits and
cocircuits of a basis in $M$ with those in some minors of type
 $M(F)$ or $M/F$. 
 


\eme{dessous visent de APB2, intuilise ici}

\begin{definition}
\label{def:dec_operation}

Let $E$ be a linearly ordered set.
Let $\F$ be a bipartite graph/tableau on $(B,E\s B)$ (or equivalently let $B$ be a basis of a matroid $M$ on $E$ with fundamental graph $\F$).
For $b\in B$, and dually for $e\not\in B$, we denote $$C^*(B;b)^<=\{e\in C^*(B;b)\mid e<b\},$$
$$C(B;e)^<=\{b\in C(B;e)\mid b<e\}.$$
%
Then, for $X\subseteq \Int(\F)$, we define the \emph{active closure} $\AA_\F(X)$ of $X$, or $\AA(X)$ for short, as the smallest subset of $E$ for inclusion such that:
\begin{itemize}
\topsep=0mm
\partopsep=0mm
\itemsep=0mm
\item $X\subseteq \AA(X)$;
\item if $b\in B$ and $b\in \AA(X)$ then $C^*(B;b)\subseteq \AA(X)$;
\item if $b\in B$ and $\emptyset\subset C^*(B;b)^<\subseteq \AA(X)$ then $b\in \AA(X)$ (and hence $C^*(B;b)\subseteq \AA(X)$).
\end{itemize}
And dually, for $X\subseteq \Ext(\F)$, we define the \emph{active closure} $\AA(X)$ of $X$ as the smallest subset of $E$ for inclusion such that:
%
\begin{itemize}
\topsep=0mm
\partopsep=0mm
\itemsep=0mm
\item $X\subseteq \AA(X)$;
\item if $e\in E\s B$ and $e\in \AA(X)$ then $C(B;e)\subseteq \AA(X)$;
\item if $e\in E\s B$ and $\emptyset\subset C(B;e)^<\subseteq \AA(X)$ then $e\in \AA(X)$ (and hence $C(B;e)\subseteq \AA(X)$).
\end{itemize}
\end{definition}



\begin{observation}
\label{obs:dual-closure}
As noted previously, the parts $B$ and $E\s B$ of $\F$ play dual parts,
as well as internally and externally active elements.
The definition of the active closue is consistent with this duality as we directly have that:
if $X\subseteq \Int(\F)$ then $X\subseteq \Ext(\F^*)$
and 
$$\AA_\F(X)=\AA_{\F^*}(X).$$
Note that the 
lemmas that follow are given in terms of internally active elements, but they can be stated dually as well, for externally active elements. We will focus on internally active elements and simply use duality to extend results.
\end{observation}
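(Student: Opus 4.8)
The statement to prove is Observation~\ref{obs:dual-closure}, which asserts that the active closure is self-dual: if $X\subseteq \Int(\F)$, then $X\subseteq \Ext(\F^*)$ and $\AA_\F(X)=\AA_{\F^*}(X)$.

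The plan is to show that the two defining procedures---the ``internal'' active closure for $\F$ on $(B,E\s B)$ and the ``external'' active closure for $\F^*$ on $(E\s B, B)$---are literally the same recursive definition, read through the duality that exchanges the two parts of the bipartition. First I would recall from the last part of Section~\ref{sec:prelim} that $\F^*$ is the bipartite graph/tableau on $(E\s B, B)$ with the same edges as $\F$, so that for every $b\in B$ the fundamental cocircuit $C^*_\F(B;b)$ (the neighbors of $b$ in $\F$, plus $b$) coincides as a subset of $E$ with the fundamental circuit $C_{\F^*}(B;b)$ of $b$ viewed as an element \emph{not} in the basis $E\s B$ of $\F^*$. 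The equality of these two sets of elements is exactly the symmetry $e\in C^*(B;b)\iff b\in C(B;e)$ recalled in the preliminaries. In particular the relation ``$e<b$'' used to form $C^*(B;b)^<$ in $\F$ is the same relation used to form $C(B;e)^<$ in $\F^*$, since the linear order on $E$ is fixed and unchanged by dualization.

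Next I would verify the first assertion, $X\subseteq \Ext(\F^*)$. An element $b\in B$ is in $\Int_\F(B)$ iff $b=\min C^*_\F(B;b)$; under duality $b$ is an element of $E\s(E\s B)$ viewed in $\F^*$, and $C^*_\F(B;b)=C_{\F^*}(B;b)$, so the condition $b=\min C_{\F^*}(B;b)$ is precisely the condition for $b$ to lie in $\Ext_{\F^*}(E\s B)$. Hence $\Int(\F)=\Ext(\F^*)$ as subsets of $E$, giving $X\subseteq \Ext(\F^*)$ whenever $X\subseteq \Int(\F)$; this legitimizes the fact that $\AA_{\F^*}(X)$ is defined at all (it requires $X\subseteq \Ext(\F^*)$).

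For the main equality $\AA_\F(X)=\AA_{\F^*}(X)$, I would compare the three closure conditions of Definition~\ref{def:dec_operation} term by term. The internal closure for $\F$ requires: (a) $X\subseteq \AA(X)$; (b) if $b\in B$ lies in $\AA(X)$ then $C^*_\F(B;b)\subseteq\AA(X)$; (c) if $\emptyset\subset C^*_\F(B;b)^<\subseteq\AA(X)$ then $b\in\AA(X)$. The external closure for $\F^*$, whose basis is $E\s B$, requires: (a$'$) $X\subseteq\AA(X)$; (b$'$) if an element $b$ not in the basis $E\s B$ (i.e.\ $b\in B$) lies in $\AA(X)$ then $C_{\F^*}(B;b)\subseteq\AA(X)$; (c$'$) if $\emptyset\subset C_{\F^*}(B;b)^<\subseteq\AA(X)$ then $b\in\AA(X)$. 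Using $C^*_\F(B;b)=C_{\F^*}(B;b)$ and the matching of the ``$<$'' restrictions, conditions (a)--(c) are identical as constraints on a subset of $E$ to the conditions (a$'$)--(c$'$); I would note that the condition on externally active elements of $\F^*$ ranges over the same index set ($b\in B$) as the condition on internally active elements of $\F$, because the roles of $B$ and $E\s B$ are exactly swapped. Since both $\AA_\F(X)$ and $\AA_{\F^*}(X)$ are defined as the smallest subset of $E$ satisfying one and the same collection of closure rules, they must coincide. The only point requiring a little care---and the closest thing to an obstacle---is to check that the index ranges and the direction of the inequalities line up correctly under the swap of the bipartition, so that no condition is silently dropped or added; once the dictionary $C^*_\F(B;b)=C_{\F^*}(B;b)$ with the shared order on $E$ is in place, this is a routine matching of the two definitions rather than a genuine difficulty.
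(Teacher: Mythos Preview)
Your proposal is correct and is essentially the natural unfolding of what the paper leaves implicit: the paper states Observation~\ref{obs:dual-closure} as immediate (``we directly have that\ldots'') and gives no proof, so your term-by-term matching of the closure conditions (a)--(c) for $\F$ against (a$'$)--(c$'$) for $\F^*$ via the dictionary $C^*_\F(B;b)=C_{\F^*}(B;b)$ is exactly the verification the reader is expected to perform.
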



We give Lemma \ref{lem:act-closure-iteration} below for consistency with the definition given in \cite[Section 5]{GiLV05}.

\begin{lemma}
\label{lem:act-closure-iteration}
Let $E$ be a linearly ordered set.
Let $\F$ be a bipartite graph/tableau on $(B,E\s B)$  (or equivalently let $B$ be a basis of a matroid $M$ on $E$ with fundamental graph $\F$).
%
For $X\subseteq E$,
let $$\A(X)=X\ \cup\  \Biggl(\bigcup_{b\in X\cap B}C^*(B;b)\Biggr)\ \cup\ \Bigl\{b\in B\s X\mid \emptyset\subset
C^*(B;b)^<\subseteq X\Bigr\}.$$
Then, for $X\subseteq \Int(\F)$,
we have $$\AA(X)=\ \bigcup_{i\geq 1}\ \A^i(X).$$
\end{lemma}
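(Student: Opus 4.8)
The plan is to treat this as a standard iterated-closure (fixed-point) argument, so the proof reduces to two inclusions once the single-step operator $\A$ is understood. First I would record two elementary properties of $\A$. It is \emph{extensive}, i.e. $Y\subseteq\A(Y)$ for every $Y\subseteq E$, which is immediate from the first term $Y$ in the definition of $\A(Y)$. It is also \emph{monotone}: if $Y\subseteq Y'$ then $\A(Y)\subseteq\A(Y')$. The first two terms of $\A$ are visibly monotone; the only point needing care is the third term $\{b\in B\s Y\mid \emptyset\subset C^*(B;b)^<\subseteq Y\}$, since enlarging $Y$ affects both the side condition $b\notin Y$ and the inclusion $C^*(B;b)^<\subseteq Y$. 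But any $b$ contributed for $Y$ either already lies in $Y'$, hence in $\A(Y')$ via its first term, or still satisfies $b\notin Y'$ and $\emptyset\subset C^*(B;b)^<\subseteq Y'$, hence lies in the third term for $Y'$.

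From extensivity, the iterates form an increasing chain $X\subseteq\A(X)\subseteq\A^2(X)\subseteq\cdots$, and since $E$ is finite this chain stabilizes; thus the set $S^*:=\bigcup_{i\geq 1}\A^i(X)$ satisfies the fixed-point equation $\A(S^*)=S^*$. It then remains to prove $S^*=\AA(X)$ by two inclusions, using only the three defining clauses of the active closure.

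For $S^*\subseteq\AA(X)$, I would show by induction on $i$ that $\A^i(X)\subseteq\AA(X)$. The base case is $X\subseteq\AA(X)$, which is the first clause of the definition of $\AA$. For the inductive step I apply $\A$ to $\A^i(X)\subseteq\AA(X)$ and check that each of the three terms of $\A(\A^i(X))$ stays inside $\AA(X)$: the first term by the induction hypothesis; the second term because for $b\in \A^i(X)\cap B\subseteq\AA(X)$ the second clause of $\AA$ gives $C^*(B;b)\subseteq\AA(X)$; and the third term because if $\emptyset\subset C^*(B;b)^<\subseteq\A^i(X)\subseteq\AA(X)$ then the third clause of $\AA$ gives $b\in\AA(X)$. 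Taking the union over $i$ yields $S^*\subseteq\AA(X)$. For the reverse inclusion $\AA(X)\subseteq S^*$, I would verify that $S^*$ itself satisfies the three clauses and then invoke the minimality of $\AA(X)$: the inclusion $X\subseteq S^*$ is clear; if $b\in B\cap S^*$ then the fixed-point equation and the second term of $\A$ give $C^*(B;b)\subseteq\A(S^*)=S^*$; and if $\emptyset\subset C^*(B;b)^<\subseteq S^*$ then the third term of $\A$ gives $b\in\A(S^*)=S^*$. Hence $S^*$ is a set satisfying the three defining conditions, so $\AA(X)\subseteq S^*$, and the two inclusions give $\AA(X)=\bigcup_{i\geq 1}\A^i(X)$.

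As for difficulty, there is no real obstacle: the argument is purely formal and relies only on the finiteness of $E$ together with the extensive and monotone behaviour of $\A$. The one spot deserving a line of attention is the monotonicity of the third term of $\A$, where the side condition $b\notin Y$ interacts with the inclusion $C^*(B;b)^<\subseteq Y$; and, implicitly, the well-definedness of $\AA(X)$ as a \emph{least} set, which holds because the family of subsets satisfying the three Horn-type clauses is closed under intersection. I note also that the hypothesis $X\subseteq\Int(\F)$ is not used in the argument beyond placing us in the domain where $\AA$ is defined, so the identity is genuinely a general property of the operator $\A$.
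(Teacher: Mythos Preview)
Your argument is correct: it is the standard fixed-point proof that the iterated one-step operator computes the least set satisfying the closure clauses, and you have handled the only nontrivial point (monotonicity of the third term of $\A$) properly. The paper's own proof is a single sentence, ``It is a direct reformulation of Definition~\ref{def:dec_operation}'', so your write-up simply supplies the details the paper omits; the approach is the same.
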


\begin{proof}
It is a direct reformulation of Definition \ref{def:dec_operation}. 
\end{proof}

%
%

The two next lemmas could be used as alternative definitions of the active closure. They are easy reformulations, and useful from a constructive viewpoint.

\begin{lemma}
\label{lem:act-closure-algo}
Let $E=e_1<...<e_n$ be a linearly ordered set.
Let $\F$ be a bipartite graph/tableau on $(B,E\s B)$ (or equivalently let $B$ be a basis of a matroid $M$ on $E$ with fundamental graph $\F$).
%
Let $X\subseteq \Int(\F)$. Then $\AA(X)$ is given by the following 
 definition (yielding a linear algorithm).

\begin{algorithme}
For all $1\leq i\leq n$:\par
\hskip 1cm if $e_i\in X$ then $e_i\in \AA(X)$;\par
\hskip 1cm if $e_i\in B$ is not internally active \par
\hskip 15mm and if all $c\in C^*(B;e_i)$ with $c<e_i$
satisfies $c\in\AA(X)$, then $e_i\in\AA(X)$;\par
\hskip 1cm if $e_i\not\in B$ and there exists $c\in C(B;e_i)$ with $c<e_i$ and $c\in\AA(X)$ then $e_i\in\AA(X)$;\par
\hskip 1cm in every other case, $e_i\not\in \AA(X)$.
\end{algorithme}
\end{lemma}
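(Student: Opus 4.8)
The plan is to show that the single-pass algorithm computes exactly the set $\AA(X)$ defined by the three closure conditions in Definition \ref{def:dec_operation}, for $X\subseteq\Int(\F)$. I would argue by double inclusion, letting $A$ denote the set produced by the algorithm and showing $A=\AA(X)$. The key structural fact I would exploit first is a monotonicity/ordering observation: for any internally active element $b$, the cocircuit $C^*(B;b)$ has $b$ as its smallest element, so $C^*(B;b)^<=\emptyset$; whereas for a non-internally-active $b\in B$, we have $\min(C^*(B;b))\in E\s B$ with $\min(C^*(B;b))<b$, so the ``trigger'' condition $\emptyset\subset C^*(B;b)^<$ is automatically nonempty. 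This is why the algorithm's cases split cleanly into elements of $B$ and elements of $E\s B$, and why it suffices to look only at elements $c<e_i$ already decided.

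First I would prove $A\subseteq\AA(X)$. Here I would induct on the index $i$ in the single pass, showing that whenever the algorithm places $e_i$ into $A$, membership $e_i\in\AA(X)$ is forced by one of the three defining conditions of the active closure. Concretely: if $e_i\in X$ this is the first condition; if $e_i\in B$ is not internally active and all $c\in C^*(B;e_i)^<$ lie in $A$ (hence in $\AA(X)$ by the induction hypothesis), then since $C^*(B;e_i)^<\neq\emptyset$ the third condition forces $e_i\in\AA(X)$; and if $e_i\notin B$ with some $c\in C(B;e_i)$, $c<e_i$, $c\in A\subseteq\AA(X)$, I would use duality via the relation $e\in C^*(B;b)\iff b\in C(B;e)$ to see that $c\in B$ is an element whose fundamental cocircuit contains $e_i$, so the second defining condition applied to $c$ pulls $e_i$ into $\AA(X)$. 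The point to check carefully is that the elements $c<e_i$ referenced by the algorithm have already been correctly classified, which is exactly what the induction on $i$ guarantees.

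For the reverse inclusion $\AA(X)\subseteq A$, I would verify that the set $A$ produced by the algorithm itself satisfies the three closure conditions of Definition \ref{def:dec_operation}; since $\AA(X)$ is by definition the smallest such set, this yields $\AA(X)\subseteq A$. Checking $X\subseteq A$ is immediate. For the second condition, I would take $b\in B\cap A$ and an element $e\in C^*(B;b)$, and argue $e\in A$: if $e<b$ this is where I need that $e$ was already added before $b$ was processed, and if $e>b$ then when the algorithm reaches $e\notin B$ it finds $b\in C(B;e)$ with $b<e$ and $b\in A$, so $e$ is added. The third condition is checked directly by the ``$e_i\in B$ not internally active'' clause of the algorithm.

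The main obstacle, and the step I would treat most carefully, is the interaction between the \emph{single forward pass} and the \emph{iterative} nature of the closure: a priori the closure conditions could require revisiting an element after later elements are added, whereas the algorithm decides each $e_i$ once and for all. The crux is to show no such revisiting is needed, i.e.\ that membership of $e_i$ is always determined by elements strictly smaller than $e_i$. For elements of $B$ this follows because the triggering set $C^*(B;b)^<$ consists only of smaller elements; for elements of $E\s B$ that enter via a smaller $b\in B$ this is immediate; the delicate case is ensuring that an element $e\in C^*(B;b)$ with $e>b$ (added by the second condition because of $b$) is correctly captured by the algorithm's clause for $e\notin B$, which relies precisely on the symmetry $e\in C^*(B;b)\iff b\in C(B;e)$. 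Once this ``smaller-elements-suffice'' property is established, the equivalence with Lemma \ref{lem:act-closure-iteration} and hence with Definition \ref{def:dec_operation} follows, and the forward pass is justified as a linear-time realization.
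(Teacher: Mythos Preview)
Your proof is correct, but it proceeds differently from the paper's. The paper does not argue by double inclusion; instead it fixes $e_i$ and directly characterizes when $e_i\in\AA(X)$, case by case, using the iterative description of Lemma~\ref{lem:act-closure-iteration}. For $e_i\in B\setminus X$ internally active, $C^*(B;e_i)^<=\emptyset$ forces $e_i\notin\AA(X)$; for $e_i\in B\setminus X$ not internally active, membership is equivalent to $C^*(B;e_i)^<\subseteq\AA(X)$; and for $e_i\notin B$, the paper tracks the first iterate $\A^j$ that adds $e_i$ and observes that the responsible $c\in B$ must satisfy $c<e_i$ (because $C^*(B;c)^<\subseteq\A^{j-1}(X)$ while $e_i\notin\A^{j-1}(X)$), which immediately gives the algorithm's clause for $e_i\notin B$. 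Your route---showing the algorithm's output $A$ satisfies the three closure rules and then invoking minimality of $\AA(X)$ for one inclusion, with a forward induction on $i$ for the other---is more structural and arguably cleaner, since it never appeals to the iteration lemma. The paper's argument, on the other hand, makes the ``no revisiting'' phenomenon explicit by exhibiting the smaller witness $c$ directly from the iterative process. Both are short; the only place you should tighten is the sentence ``if $e<b$ this is where I need that $e$ was already added before $b$ was processed'': make explicit that $b\in A\cap B$ with some $e<b$ in $C^*(B;b)$ forces $b\notin X$ (since $X\subseteq\Int(\F)$), hence $b$ entered $A$ via the non-active clause, which already demands $C^*(B;b)^<\subseteq A$.
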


\begin{proof}
Let $1\leq i\leq n$.
We analyze under which condition the element $e_i$ belongs to $\AA(X)$.
If $e_i\in X$ then $e_i\in \AA(X)$ directly by definition.
Let $e_i\in B\s X$. 
If $e_i$ is internally active, then $C^*(B;e_i)=\emptyset$, hence $e_i\not\in \AA(X)$, by definition.
Assume $e_i$ is not internally active.
We have $e_i\in\AA(X)$ if and only if $C^*(B;e_i)^<\subseteq \AA(X)$, that is 
if and only if, for all $c\in C^*(B;e_i)^<$, we have $c\in \AA(X)$, which is the condition given in the algorithm. 
Now let $e_i\not\in B$.
%
Using the definition given in Lemma \ref{lem:act-closure-iteration}, 
we have $e_i\in\AA(X)$ if and only if
$e_i$ is added to $\AA(X)$ by $\A^j(X)$ for some (minimal) $j$, $e_i$ being an element of  $C^*(B;c)$ for some $c\in \A^{j-1}(X)\cap B$.
Such a $c$ satisfies $c<e$, since $C^*(B;c)^<\subseteq \A^{j-1}(X)$ and $e_i\not\in \A^{j-1}(X)$. And it satisfies $c\in C(B;e_i)$, as this property is equivalent to $e_i\in C^*(B;c)$. So we have that  $e_i\in\AA(X)$ if and only if there exists $c\in C(B;e_i)$, with $c<e_i$ and $c\in\AA(X)$.
%
%
%
\end{proof}

\begin{lemma}
\label{lem:act-closure-algo-bas}
Let $E$ be a linearly ordered set.
Let $\F$ be a bipartite graph/tableau on $(B,E\s B)$ (or equivalently let $B$ be a basis of a matroid $M$ on $E$ with fundamental graph $\F$).
Assume $E=e_1<\dots<e_n$.
%
Let $X\subseteq \Int(\F)$. Then $\AA(X)$ is given by the following 
 algorithmic definition.

\begin{algorithme}
Initialize $\AA(X):=\emptyset$.\par
For $i$ from $1$ to $r$ do:\par
 \hskip 1cm if $b_i\in X$ or $b_i$ satisfies $\emptyset\subset C^*(B;b_i)^<\subseteq \AA(X)$ then $\AA(X):=\AA(X)\cup C^*(B;b_i)$.\par
\end{algorithme}
\end{lemma}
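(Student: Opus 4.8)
The plan is to show that the set $S$ produced by the algorithm coincides with $\AA(X)$ by proving the two inclusions separately, using the characterization of $\AA(X)$ as the smallest set satisfying the three bullet conditions of Definition~\ref{def:dec_operation}. Throughout I write $B=\{b_1<\dots<b_r\}$ and note that $X\subseteq\Int(\F)\subseteq B$, so every element of $X$ occurs among the $b_i$ and is therefore examined by the loop.

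First I would prove $S\subseteq\AA(X)$ by induction on the loop counter $i$, with the invariant that the partial value of $S$ is contained in $\AA(X)$ before each iteration. At step $i$, if the test fails nothing is added. If $b_i\in X$, then $b_i\in\AA(X)$ by the first bullet, hence $C^*(B;b_i)\subseteq\AA(X)$ by the second bullet. If instead $\emptyset\subset C^*(B;b_i)^<\subseteq S$, then by the induction hypothesis $C^*(B;b_i)^<\subseteq\AA(X)$, so the third bullet gives $b_i\in\AA(X)$ and again the second gives $C^*(B;b_i)\subseteq\AA(X)$. In either case the update preserves $S\subseteq\AA(X)$, so the final output satisfies $S\subseteq\AA(X)$.

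For the reverse inclusion I would check that $S$ itself satisfies the three defining conditions; minimality of $\AA(X)$ then yields $\AA(X)\subseteq S$. The condition $X\subseteq S$ is immediate since each $b\in X$ triggers the test when processed, and the condition ``$b\in B\cap S\Rightarrow C^*(B;b)\subseteq S$'' holds because the only fundamental cocircuit meeting $B$ in $b$ is $C^*(B;b)$ itself (recall $C^*(B;b')\cap B=\{b'\}$), so a basis element $b$ can enter $S$ only while $b$ is being processed, and at that moment all of $C^*(B;b)$ is added. The remaining third condition rests on the ordering fact that is the main point of the proof: \emph{every non-basis element of $S$ is inserted at a step processing one of its basis-neighbours that is strictly smaller than it}. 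Indeed, a non-basis $e$ can enter $S$ only when some basis-neighbour $b_i$ (i.e.\ $e\in C^*(B;b_i)$) is activated; take the first such $b_i$, so that $e$ enters exactly at that step. If $b_i>e$ then $e\in C^*(B;b_i)^<$; but $b_i\in X\subseteq\Int(\F)$ is impossible since an internally active $b_i$ is the minimum of its cocircuit, whereas the alternative trigger $C^*(B;b_i)^<\subseteq S$ would already have placed $e$ in $S$ before this step, contradicting the choice of step. Hence $b_i<e$.

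Granting this fact, the third condition follows: if $\emptyset\subset C^*(B;b)^<\subseteq S$, then each element of $C^*(B;b)^<$ is a non-basis element smaller than $b$, inserted by a basis-neighbour smaller than itself and hence smaller than $b$, so it already lies in the partial $S$ when $b$ is processed; the test then fires and $b\in S$. This shows $S$ obeys all three conditions, giving $\AA(X)\subseteq S$ and thus $S=\AA(X)$. I expect the ordering fact to be the only real obstacle: it is exactly what guarantees that the algorithm's eager insertion of the \emph{larger} non-basis neighbours of an activated $b_i$ (the elements of $C^*(B;b_i)$ above $b_i$) neither escapes $\AA(X)$ nor triggers a later basis element prematurely, so that restricting the single pass of Lemma~\ref{lem:act-closure-algo} to the basis elements loses nothing.
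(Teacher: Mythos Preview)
Your argument is correct and is precisely the kind of direct verification the paper alludes to when it dismisses the proof as ``straightforward'' and ``intermediate between Definition~\ref{def:dec_operation} and Lemma~\ref{lem:act-closure-algo}''; the paper gives no further details. Your ``ordering fact''---that every non-basis element enters $S$ via a strictly smaller basis-neighbour---is exactly the point that makes the single pass over $B$ suffice, and your case analysis for it (ruling out $b_i\in X$ by internal activity, and ruling out the other trigger by minimality of the insertion step) is clean and complete.
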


\begin{proof}
This alternative formulation for a definition of $\AA$ is intermediate between the ones given in Definition \ref{def:dec_operation} and Lemma \ref{lem:act-closure-algo}.
The proof is straightforward.
\end{proof}

\begin{lemma}
\label{lem:closure-easy}
Let $E$ be a linearly ordered set.
Let $\F$ be a bipartite graph/tableau on $(B,E\s B)$ (or equivalently let $B$ be a basis of a matroid $M$ on $E$ with fundamental graph $\F$).
Let $X\subseteq \Int(\F)$. 
We have $$\AA(X)\cap \bigl(\Int(\F)\cup \Ext(\F)\bigr)=X.$$

In particular, if $\AA(X)=E$ then $X=\Int(\F)$, 
and if $\AA(\{x\})=E$ for $x\in \Int(\F)$ 
then $\F$ is uniactive internal. 
\end{lemma}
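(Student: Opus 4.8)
The plan is to prove the two statements of Lemma~\ref{lem:closure-easy} in turn, relying on the recursive structure of the active closure from Definition~\ref{def:dec_operation} (or equivalently the iterative description in Lemma~\ref{lem:act-closure-iteration}).

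First I would establish the key identity $\AA(X)\cap(\Int(\F)\cup\Ext(\F))=X$. Since $X\subseteq\Int(\F)$, the inclusion $X\subseteq\AA(X)\cap(\Int(\F)\cup\Ext(\F))$ is immediate. For the reverse inclusion, the point is that no \emph{new} active element can ever be pulled into the closure. I would argue element by element, using the generation rules. Take any $y\in\AA(X)\setminus X$; by Lemma~\ref{lem:act-closure-iteration}, $y$ enters at some stage $\A^j(X)$ for a minimal $j\geq 1$, via one of two mechanisms. If $y=b\in B$ is added because $\emptyset\subset C^*(B;b)^<\subseteq\AA(X)$, then $C^*(B;b)^<\neq\emptyset$ means $b$ has some element of its fundamental cocircuit strictly below it, so $b\neq\min(C^*(B;b))$, i.e.\ $b\notin\Int(\F)$. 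If instead $y=e\in E\setminus B$ is added because it lies in $C^*(B;b)\subseteq\AA(X)$ for some $b\in B\cap\AA(X)$ with $b<e$ (this ordering being forced, as in the proof of Lemma~\ref{lem:act-closure-algo}, since $e\notin\A^{j-1}(X)$ while $C^*(B;b)^<\subseteq\A^{j-1}(X)$), then $b\in C(B;e)$ with $b<e$, so $e\neq\min(C(B;e))$, i.e.\ $e\notin\Ext(\F)$. In either case $y\notin\Int(\F)\cup\Ext(\F)$, which gives the reverse inclusion and hence equality.

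The two ``in particular'' consequences then follow formally. If $\AA(X)=E$, intersecting with $\Int(\F)\cup\Ext(\F)$ gives $\Int(\F)\cup\Ext(\F)=X$; since $X\subseteq\Int(\F)$ and $\Int(\F)\cap\Ext(\F)=\emptyset$, this forces $\Ext(\F)=\emptyset$ and $X=\Int(\F)$. For the last assertion, apply this with $X=\{x\}$: if $\AA(\{x\})=E$ then $\Int(\F)=\{x\}$ and $\Ext(\F)=\emptyset$. Recalling that $p=\min(E)\in\Int(\F)\cup\Ext(\F)$ always holds, and that here $\Ext(\F)=\emptyset$, we get $x=p$ and $\Int(\F)=\{\min(E)\}$ with $\Ext(\F)=\emptyset$; comparing with the reformulation of uniactivity recalled at the start of Section~\ref{sec:dec-bases}, this is exactly the condition that $\F$ is uniactive internal.

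The only delicate point, and the one I would be most careful about, is the ordering claim in the externally-added case: that whenever an element $e\in E\setminus B$ is absorbed into $\AA(X)$ through some $b\in B$ with $e\in C^*(B;b)$, one necessarily has $b<e$. This is not part of the closure rules directly but must be extracted from the minimality of the generation stage, exactly as done in the proof of Lemma~\ref{lem:act-closure-algo}; I would either cite that lemma or reproduce the one-line argument. Everything else is bookkeeping with the disjointness $\Int(\F)\cap\Ext(\F)=\emptyset$ and the fact that $\min(E)$ is always active.
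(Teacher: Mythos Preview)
Your proof is correct and follows essentially the same idea as the paper's. The paper's proof is a one-liner that cites Lemma~\ref{lem:act-closure-algo} directly: from the algorithmic description there, an internally active $e_i\notin X$ matches none of the three inclusion rules, and an externally active $e_i$ has $C(B;e_i)^<=\emptyset$ so also fails the relevant rule; hence neither can lie in $\AA(X)$. You instead unpack the iterative definition from Lemma~\ref{lem:act-closure-iteration} and re-derive the ordering claim $b<e$ for newly absorbed $e\in E\setminus B$, which is exactly the argument already carried out inside the proof of Lemma~\ref{lem:act-closure-algo}. Both routes are valid; yours is more self-contained, the paper's is shorter by leaning on the prior lemma. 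Your treatment of the ``in particular'' consequences, including the observation that $\Ext(\F)=\emptyset$ and $x=\min(E)$, is more explicit than the paper's (which leaves those as understood), but fully correct.
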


\begin{proof}
Direct by Lemma \ref{lem:act-closure-algo}: if $e_i$ is internally active and $e_i\not\in X$ then $e_i\not\in \AA(X)$; and if $e_i$ is externally active then there exist no $c<e_i$ with $c\in C(B;e_i)$, and then $e_i\not\in \AA(X)$.
\end{proof}

We give Lemma \ref{lem:act-closure-part} below for practical purpose.
It notably shows that the active closure of $X\subseteq \Int(\F)$ can be computed using  active closures of its elements, successively in any order, while deleting successively the results from $\F$.

\begin{lemma}
\label{lem:act-closure-part}
Let $E$ be a linearly ordered set.
Let $\F$ be a bipartite graph/tableau on $(B,E\s B)$ (or equivalently let $B$ be a basis of a matroid $M$ on $E$ with fundamental graph $\F$).
Let $X\subseteq \Int(\F)$ and let $Y,Z$ such that $X=Y\uplus Z$.
We have
$$\mathlarger{\AA_\F(X)\ =\ \AA_{\F}(Y)\ \uplus\ \AA_{\F-\AA_\F(Y)}(Z)}.$$

%
%
\end{lemma}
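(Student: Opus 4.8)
The plan is to prove $\AA_\F(X)=\AA_\F(Y)\uplus\AA_{\F-\AA_\F(Y)}(Z)$ by double inclusion, exploiting the characterization of $\AA$ as the \emph{smallest} subset satisfying the three closure conditions of Definition~\ref{def:dec_operation}. Write $P=\AA_\F(Y)$, $\F'=\F-P$ and $Q=\AA_{\F'}(Z)$, so the goal is $\AA_\F(X)=P\uplus Q$. First I would record the preliminary facts that make the right-hand side meaningful. By Lemma~\ref{lem:closure-easy}, $P\cap(\Int(\F)\cup\Ext(\F))=Y$; since $Z\subseteq\Int(\F)$ and $Z\cap Y=\emptyset$, this forces $Z\cap P=\emptyset$, so $Z$ survives in $\F'$, and each $b\in Z$, having $C^*_\F(B;b)^<=\emptyset$, still satisfies $C^*_{\F'}(B\s P;b)^<=\emptyset$; thus $Z\subseteq\Int(\F')$ and $Q$ is well defined. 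Moreover $Q\subseteq E\s P$, so $P\cap Q=\emptyset$ and $P\uplus Q$ is a genuine disjoint union. I will also use the \emph{monotonicity} $Y\subseteq X\Rightarrow\AA_\F(Y)\subseteq\AA_\F(X)$, immediate from minimality (the set $\AA_\F(X)$ contains $Y$ and already satisfies all three closure conditions). Throughout I rely on the elementary identity $C^*_{\F'}(B\s P;b)=C^*_\F(B;b)\s P$ for $b\in B\s P$, which merely records that deleting the vertices of $P$ deletes exactly the neighbours lying in $P$.

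For $\AA_\F(X)\subseteq P\cup Q$, I would check that $P\cup Q$ satisfies the three conditions defining $\AA_\F(X)$, so minimality gives the inclusion. The seed condition holds since $Y\subseteq P$ and $Z\subseteq Q$. For cocircuit expansion, take $b\in B\cap(P\cup Q)$: if $b\in P$ then $C^*_\F(B;b)\subseteq P$ by closedness of $P$; if $b\in Q$ (hence $b\notin P$) then $C^*_\F(B;b)\s P=C^*_{\F'}(B\s P;b)\subseteq Q$, whence $C^*_\F(B;b)\subseteq P\cup Q$. For the pull-in rule, suppose $b\in B$ with $\emptyset\subset C^*_\F(B;b)^<\subseteq P\cup Q$; if $C^*_\F(B;b)^<\subseteq P$ then $b\in P$ by the pull-in rule for $P$, and otherwise $b\notin P$ and $\emptyset\subset C^*_\F(B;b)^<\s P=C^*_{\F'}(B\s P;b)^<\subseteq Q$ gives $b\in Q$ by the pull-in rule for $Q$ in $\F'$.

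For $P\cup Q\subseteq\AA_\F(X)$, the part $P\subseteq\AA_\F(X)$ is exactly monotonicity. For $Q\subseteq\AA_\F(X)$, I would show that $S':=\AA_\F(X)\s P$ is closed for the rules of $\F'$ and contains $Z$; then minimality of $Q=\AA_{\F'}(Z)$ yields $Q\subseteq S'\subseteq\AA_\F(X)$. Indeed $Z\subseteq X\subseteq\AA_\F(X)$ and $Z\cap P=\emptyset$ give $Z\subseteq S'$. If $b\in(B\s P)\cap S'$ then $C^*_\F(B;b)\subseteq\AA_\F(X)$, so $C^*_{\F'}(B\s P;b)=C^*_\F(B;b)\s P\subseteq S'$. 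If $b\in B\s P$ with $\emptyset\subset C^*_{\F'}(B\s P;b)^<\subseteq S'$, then, using $P\subseteq\AA_\F(X)$, the full set $C^*_\F(B;b)^<=(C^*_\F(B;b)^<\cap P)\cup C^*_{\F'}(B\s P;b)^<$ lies in $\AA_\F(X)$ and is nonempty, so the pull-in rule for $\AA_\F(X)$ gives $b\in\AA_\F(X)$, hence $b\in S'$.

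Combining the two inclusions yields $\AA_\F(X)=P\uplus Q$, and the dual statement for $X\subseteq\Ext(\F)$ follows by Observation~\ref{obs:dual-closure}. The genuinely delicate step is the pull-in (third) condition in each direction: one must split $C^*_\F(B;b)^<$ into its part inside $P$ and its part inside $\F'$, and it is precisely the monotonicity fact $P\subseteq\AA_\F(X)$ that lets an $\F'$-level hypothesis be lifted back to an $\F$-level one (and, conversely, a $P\cup Q$-hypothesis be pushed down to $\F'$). A fully equivalent alternative would be a single left-to-right induction on $e_1<\dots<e_n$ via the one-pass characterization of Lemma~\ref{lem:act-closure-algo}, proving $e_i\in\AA_\F(X)\iff e_i\in P\cup Q$ step by step; the same three-way case split (internally active / non-active basis element / non-basis element) and the same role of $P\subseteq\AA_\F(X)$ reappear there.
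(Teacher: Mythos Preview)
Your proof is correct. The paper takes the alternative route you mention at the end: it runs a left-to-right induction on $e_1<\dots<e_n$ using the one-pass characterization of Lemma~\ref{lem:act-closure-algo}, verifying at each step that $e_i\in\AA_\F(X)$ if and only if $e_i\in\AA_\F(Y)\cup\AA_{\F-\AA_\F(Y)}(Z)$ by the same three-way case split you describe. Your approach instead works directly from the minimality clause of Definition~\ref{def:dec_operation} via double inclusion, which is arguably cleaner: it isolates exactly the two structural facts driving the result (cocircuit expansion of $P$ and monotonicity $P\subseteq\AA_\F(X)$) rather than hiding them inside an element-by-element scan. The paper's version, on the other hand, makes the algorithmic flavour of the lemma explicit and dovetails with the single-pass computations used later (Proposition~\ref{prop:basori-partact}). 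One small point worth making explicit in your write-up: in the pull-in step for $\AA_\F(X)\subseteq P\cup Q$, the claim ``otherwise $b\notin P$'' is justified by the contrapositive of cocircuit expansion for $P$ (if $b\in P$ then $C^*_\F(B;b)\subseteq P$, hence $C^*_\F(B;b)^<\subseteq P$); you use this implicitly but it deserves a clause.
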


\begin{proof}
Let us denote $\F_Y=\F-\AA_\F(Y)$.
Assume  $E=e_1<\dots<e_n$. We prove the result by induction. We assume that 
$\AA_\F(X)\cap\{e_1,\dots,e_{i-1}\}=(\AA_{\F}(Y)\cup \AA_{\F_Y}(Z))\cap\{e_1,\dots,e_{i-1}\}$. 
And we apply the definition given in Lemma \ref{lem:act-closure-algo}.
If $e_i\in Y$ then $e_i\in \AA(X)$ and $e_i\in \AA(Y)$.
If $e_i\in Z$ then $e_i\in \AA(X)$ and $e_i\in \AA_{\F_Y}(Z)$.
If $\emptyset\subset C^*(B;e_i)^<\subseteq \AA(X)$, then $e_i\in \AA(X)$. Moreover, in this case, we have $\emptyset\subset C^*(B;e_i)^<\subseteq \AA_{\F}(Y)\cup \AA_{\F_Y}(Z))$ by induction hypothesis, then: either $\emptyset\subset C^*(B;e_i)^<\subseteq \AA_{\F}(Y)$, and in this case $e_i\in \AA(Y)$;  or $\emptyset\subset C^*(B;e_i)^<\s \AA_{\F}(Y)\subseteq  \AA_{\F_Y}(Z))$, and in this case $e_i\in \AA_{\F_Y}(Z)$.
If $e_i\not\in B$ and there exists $c\in C(B;e_i)$ with $c<e_i$ and $c\in\AA(X)$ then $e_i\in\AA(X)$. Moreover, in this case, by induction hypothesis, we have: either there exists $c\in C(B;e_i)$ with $c<e_i$ and  $c\in\AA(Y)$, and in this case $e_i\in \AA(Y)$; or there exist no $c\in C(B;e_i)$ with $c<e_i$ and  $c\in\AA(Y)$, and then there exists  $c\in C(B;e_i)\s \AA(Y)$ with $c<e_i$ and $c\in\AA_{\F_Y}(Z))$, and in this case $e_i\in \AA_{\F_Y}(Z)$.
In every other case, $e_i\not\in \AA(X)$, $e_i\not\in \AA(Y)$ and $e_i\not\in \AA_{\F_Y}(Z))$.
Finally, we have shown that, in every case, $e_i\in \AA(X)$ if and only if
$e_i\not\in \AA(Y)$ or $e_i\not\in \AA_{\F_Y}(Z))$,
which achieves the proof by induction.
Observe that the resulting union is a disjoint union since 
$\AA_{\F_Y}(Z))\cap \AA_\F'Y)=\emptyset$ by definition of $\F_y$.
\end{proof}

Proposition \ref{prop:dec_graph} (in terms of bipartite graphs/tableaux) and Proposition \ref{prop:dec_base} (the same result rephrased more specifically in terms of matroids)
below provide a general characterization of the active closure in terms of activities of fundamental graphs induced in minors. Hence it could be used to provide various decompositions of activities for (fundamental) bipartite graphs/tableaux. 
In what follows, it will be practically used  in a  restricted form, essentially when $X$ is the set of internally active elements greater than a given one.


\begin{prop} 
\label{prop:dec_graph}
Let $E$ be a linearly ordered set.
Let $\F$ be a bipartite graph/tableau on $(B,E\s B)$.
Let $X\subseteq \Int(\F)$.
The set $\AA(X)$ is the unique subset $A$ of $E$ satisfying the following properties:
\begin{enumerate}[(i)]
\partopsep=0mm \topsep=0mm 
\item \label{it1} for all $e\in E\s (A \cup B)$, we have $C(B;e) \cap A=\emptyset$; 

or equivalently: for all $b\in B\cap A$, we have $C^*(B;b)\subseteq A$;
\item \label{it2} $\Int(\F-A)=\Int(\F)\s X$;
\item \label{it3} $\Ext(\F-A)=\Ext(\F)$;
\item \label{it4} $\Int(\F-E\s A)=X$;
\item \label{it5} $\Ext(\F-E\s A)=\emptyset$.
\end{enumerate}
\end{prop}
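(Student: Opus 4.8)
The plan is to prove the statement in two parts: first that $A=\AA(X)$ satisfies properties \ref{it1}--\ref{it5}, and then that these five properties determine $A$ uniquely. The elementary observation I would use throughout is that deleting vertices from $\F$ only erases edges, so that for $b\in B\s A$ the fundamental cocircuit of $b$ in $\F-A$ is $C^*(B;b)\s A$ and for $e\in(E\s B)\s A$ the fundamental circuit of $e$ is $C(B;e)\s A$; dually, in the restriction $\F-(E\s A)$ the relevant cocircuits/circuits are $C^*(B;b)\cap A$ and $C(B;e)\cap A$. The two formulations of \ref{it1} are exchanged by the identity $e\in C^*(B;b)\Leftrightarrow b\in C(B;e)$, so it suffices to treat the form ``$C^*(B;b)\subseteq A$ for $b\in B\cap A$''; for $A=\AA(X)$ this is literally the second clause of Definition~\ref{def:dec_operation}, so \ref{it1} holds by construction.

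For \ref{it2} and \ref{it3} I would argue elementwise, using Lemma~\ref{lem:closure-easy} (which gives $\AA(X)\cap(\Int(\F)\cup\Ext(\F))=X$, hence $\Int(\F)\s A=\Int(\F)\s X$ and $\Ext(\F)\cap A=\emptyset$) together with the single-pass description in Lemma~\ref{lem:act-closure-algo}. For \ref{it3}: an externally active $e$ of $\F$ has no smaller neighbour, so it stays externally active after deleting $A$; conversely, if $e$ is externally active in $\F-A$ but had a basis neighbour $b<e$ in $\F$, then $b<e$ forces $b\in A$, whence $C^*(B;b)\subseteq A$ by \ref{it1} and so $e\in A$, a contradiction. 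For \ref{it2}: if $b\in B\s A$ satisfies $b=\min(C^*(B;b)\s A)$, then by Lemma~\ref{lem:act-closure-algo} the fact that $b\notin\AA(X)$ forces $b$ to be already internally active in $\F$ (otherwise $b$ would have a cocircuit element $c<b$ with $c\notin A$, contradicting the minimality of $b$ in $C^*(B;b)\s A$); the reverse inclusion $\Int(\F)\s X\subseteq\Int(\F-A)$ is immediate.

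For \ref{it4} and \ref{it5} I would pass to the restriction $\F-(E\s A)$. By \ref{it1} every $b\in B\cap A$ keeps its full cocircuit inside $A$, so $b$ is internally active there iff $b=\min C^*(B;b)$, i.e.\ iff $b\in\Int(\F)$, which by Lemma~\ref{lem:closure-easy} happens exactly when $b\in X$; this gives \ref{it4}. For \ref{it5}, the third clause of Lemma~\ref{lem:act-closure-algo} shows that every $e\in(E\s B)\cap A$ entered $\AA(X)$ because some $c\in C(B;e)$ with $c<e$ lies in $A$, so $e$ can never be the smallest element of $C(B;e)\cap A$ and hence is never externally active in the restriction.

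For uniqueness, given any $A$ satisfying \ref{it1}--\ref{it5}, I would first deduce $\AA(X)\subseteq A$ by checking that $A$ meets the three defining clauses of Definition~\ref{def:dec_operation}, so that minimality of $\AA(X)$ applies: $X\subseteq A$ follows from \ref{it4}, the second clause is \ref{it1}, and the third follows from \ref{it2}, since a $b$ with $\emptyset\subset C^*(B;b)^<\subseteq A$ lying outside $A$ would be internally active in $\F-A$, hence in $\Int(\F)\s X\subseteq\Int(\F)$, contradicting $C^*(B;b)^<\neq\emptyset$. For the reverse inclusion, set $A'=\AA(X)\subseteq A$ and suppose $D=A\s A'\neq\emptyset$ with $d=\min D$; since $A$ and $A'$ both satisfy \ref{it2}--\ref{it3}, the activity sets of $\F-A'$ coincide with those of $\F-A$ and avoid $A\ni d$, so $d$ is neither internally nor externally active in $\F-A'$. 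If $d\in B$, this produces a cocircuit element $c<d$ of $d$ with $c\notin A'$, hence $c\notin A$ (as $c<\min D$), contradicting $C^*(B;d)\subseteq A$ from \ref{it1}. If $d\notin B$, then \ref{it5} applied in $\F-(E\s A)$ yields a basis element $c<d$ of $C(B;d)$ in $A$, hence in $A'$, and the second clause of $\AA(X)$ forces $d\in C^*(B;c)\subseteq A'$, again a contradiction; thus $D=\emptyset$ and $A=\AA(X)$. I expect this last inclusion $A\subseteq\AA(X)$ to be the main obstacle, as it is the only step that must combine the closure structure of $\AA(X)$ with the external properties \ref{it1} and \ref{it5} of the abstract candidate $A$, and closing the minimal-element argument in both parity cases is the delicate point; the existence verifications, by contrast, are essentially bookkeeping once the cocircuit/circuit formulas in $\F-A$ and $\F-(E\s A)$ are in hand.
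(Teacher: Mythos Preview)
Your proof is correct and follows essentially the same route as the paper: verify properties \ref{it1}--\ref{it5} for $\AA(X)$ from Definition~\ref{def:dec_operation} together with Lemmas~\ref{lem:closure-easy} and~\ref{lem:act-closure-algo}, then obtain $\AA(X)\subseteq A$ by checking the three clauses of Definition~\ref{def:dec_operation}, and finally $A\subseteq\AA(X)$ by a minimal-element argument splitting on $d\in B$ or $d\notin B$. The only cosmetic difference is that the paper phrases the last step as an infinite descent (producing $f<e$ in $A\setminus\AA(X)$ from any $e\in A\setminus\AA(X)$) and argues directly from the definition of $\AA(X)$, whereas you take $d=\min(A\setminus\AA(X))$ from the start and exploit the already-established properties \ref{it2}--\ref{it3} for $A'=\AA(X)$; the two are equivalent.
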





\begin{proof}
First we verify that the two properties stated in (\ref{it1}) are equivalent.
Indeed the first  property 
can be written equivalently:
for all $e\in E\s B$, if $C(B;e)\cap A\not=\emptyset$ then $e\in A$; 
that is: 
for all $e\in E\s B$,  if $e\in C^*(B;b)$ for some $b\in A\cap B$ then $e\in A$;
that is: 
for all $e\in E\s B$, for all $b\in B$, if $e\in C^*(B;b)$ and $b\in A$ then $e\in A$;
that is: 
for all $b\in B$, if $b\in A$ then $C^*(B;b)\subseteq A$.

Next, we show that $\AA(X)$  satisfies the five properties (\ref{it1})-(\ref{it5}).
\partopsep=0mm \topsep=0mm \parsep=0mm \itemsep=0mm
\begin{enumerate}[(i)]
\item
Let $e\not\in \AA(X)\cup B$. If $C(B;e)=\{e\}$ then $C(B;e) \cap \AA(X)=\emptyset$. Otherwise, let $b\in B\cap C(B;e)$, which implies $e\in C^*(B;b)$.
If $b\in \AA(X)$ then $C^*(B;b)\subseteq \AA(X)$ by definition of $\AA(X)$, so $e\in \AA(X)$, which is a contradiction. Hence $C(B;e) \cap \AA(X)=\emptyset$.\par

\item 
Let $b\in \Int(\F-\AA(X))$.
By definition of $\Int$, we have $b\in B\s \AA(X)$ and  $b=\min (C^*(B;b)\s \AA(X))$.
Then $C^*(B;b)^<\subseteq \AA(X)$, so $C^*(B;b)^<=\emptyset$
otherwise $b\in \AA(X)$ by definition of $\AA(X)$, so $b=\min (C^*(B;b))$, that is $b\in \Int(\F)$. Since $b\not\in\AA(X)$, we get $b\in \Int(\F)\s X$. Conversely, let $b\in \Int(\F)\s X$.
Since $b\not\in X$ and $b=\min(C^*(B;b))$ then
$b\not\in \AA(X)$, by definition of $\AA(X)$.
So $b=\min(C^*(B;b)\s \AA(X))$, that is $b\in \Int(\F-\AA(X))$.\par

\item 
Let $e\in \Ext(\F-\AA(X))$.
By definition of $\Ext$, we have $e\in (E\s B)\s \AA(X)$
and $e=\min (C(B;e)\s \AA(X))$.
Then there exist no $c\in C(B;e)\cap \AA(X)$ with $c<e$, otherwise $e\in \AA(X)$ (by Lemma \ref{lem:act-closure-algo}).
So $e=\min (C(B);e)$, that is $e\in \Ext(\F)$. Conversely,
let $e\in \Ext(\F)$. We have $e\not\in B$.
Since $e=\min C(B;e)$, we have $e\not\in \AA(X)$ (by Lemma \ref{lem:act-closure-algo}). So $e=\min ( C(B;e)\s \AA(X))$, that is $e\in \Ext(\F-\AA(X))$.

\item Let $b\in X$. Since $X\subseteq \Int(\F)$ and $X\subseteq \AA(X)$, $b$ is  internally active in $\F$
and $b\in \AA(X)$. So $b=\min (C^*(B;b))=\min (C^*(B;b)\cap \AA(X))$, so $b\in \Int(\F-E\s \AA(X))$. 
Conversely, let $b\in \Int(\F-E\s \AA(X))$.
Then $b=\min (C^*(B;b)\cap \AA(X))$ by definition of $\Int$. 
So $C^*(B;b)^< \not\subseteq \AA(X)$.
By Lemma \ref{lem:act-closure-algo}, $b\in\AA(X)$ implies $b\in X$ or $C^*(B;b)^< \subseteq \AA(X)$. So we have $b\in X$.

\item Assume $e\in \Ext(\F-E\s \AA(X))$.
We have $e\in \AA(X)$, so, by  Lemma \ref{lem:act-closure-algo}, there exists $c<e$ in $C(B;e)\cap \AA(X)$,  so $e\not=\min (C(B;e)\cap \AA(X))$, a contradiction with the definition of $\Ext$. Hence $\Ext(\F-E\s A)=\emptyset$.\par
\end{enumerate}

Now, let $A\subseteq E$ satisfying these five properties.
We show that $A$ satisfies Definition \ref{def:dec_operation} of $\AA(X)$.
The property (\ref{it4}) implies $X\subseteq A$, which is the first property to satisfy in Definition \ref{def:dec_operation}. 
As shown above, the property (\ref{it1}) can be stated: for all $b\in B$, if $b\in A$ then $C^*(B;b)\subseteq A$, which is the second property to satisfy in Definition \ref{def:dec_operation}. 
Finally, assume that there exists $b\in B$ such that $\emptyset \subset C^*(B;b)^<\subseteq A$ and $b\not\in A$. Then $b\not\in \Int(\F)$ as $C^*(B;b)^<\not=\emptyset$.
And $b=\min( C^*(B;b)\s A)$ as $C^*(B;e)^<\subseteq A$, that is: $b\in \Int(\F-A)$.
So $b\in \Int(\F-A)\s \Int(\F)$ which is a contradiction with property (\ref{it2}).
So $A$ satisfies the third property in Definition \ref{def:dec_operation}. 
Since $A$ satisfies the three properties in Definition \ref{def:dec_operation}, and $\AA(X)$ is the smallest set satisfying those three properties, we have shown $\AA(X)\subseteq A$.

To conclude, let us assume that there exists $e\in A\s \AA(X)$. 
In a first case, let us assume that $e\in B$.
Then $C^*(B;e)\subseteq A$ by property (\ref{it1}). 
If $e=\min (C^*(B;e))$ then we have $e\in \Int(\F-E\s A)$ (by definition of $\Int$, since $e\in A$), which implies $e\in X$ by property (\ref{it4}), which is a contradiction with $e\not\in \AA(X)$.
So there exists $f<e$ in $C^*(B;e)\s\AA(X)$ (otherwise $\emptyset \subset C^*(B;e)^<\subseteq \AA(X)$, which implies $e\in\AA(X)$ by definition of $\AA(X)$). So there exists $f<e$ with $f\in A\s \AA(X)$.
In a second case, let us assume that $e\not\in B$.
Then, by property (\ref{it5}), there exists $f<e$ with $f\in C(B;e)\cap A$ (otherwise
$e$ is externally active in $\F-E\s A$). 
By assumption we have $e\in E\s(\AA(X)\cup B)$,
so, by property (\ref{it1}) satisfied by $\AA(X)$,
 we have
$C(B;e)\subseteq E\s \AA(X)$. So we have $f\in A\s \AA(X)$.
In any case, the existence of $e$ in $A\s \AA(X)$ implies the
existence of $f<e$ in $A\s \AA(X)$, which is impossible. 
So we have proved $A=\AA(X)$.
\emenew{apparemment pty (iii) non utilisee....}%
\end{proof}


\begin{prop}[equivalent to Proposition \ref{prop:dec_graph}] 
\label{prop:dec_base}
Let $E$ be a linearly ordered set.
Let $B$ be a basis of a matroid $M$ on $E$ with fundamental graph $\F$. Let $X\subseteq \Int_M(B)$.
The set $F=E\s\AA(X)$ is the unique subset of $E$
satisfying the following properties:
\begin{enumerate}[(i)]
\item $B\cap F$ is a basis of $M(F)$, and, equivalently, $B\s F$ is a basis of $M/F$\par
\item  $\Int_{M(F)}(B\cap F)=\Int_M(B)\s X$,\par 
\item  $\Ext_{M(F)}(B\cap F)=\Ext_M(B)$,\par
\item  $\Int_{M/F}(B\s F)=X$,\par
\item  $\Ext_{M/F}(B\s F)=\emptyset$.
\end{enumerate}
\end{prop}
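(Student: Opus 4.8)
The plan is to deduce this statement directly from Proposition~\ref{prop:dec_graph} by translating each of its five conditions from the language of fundamental graphs into the language of minors, the translation being furnished by Property~\ref{pty:fund_graph_flat}. Concretely, I would set $A=\AA(X)$ and $F=E\s A$, so that the two propositions concern complementary sets, and I would show that a subset $F\subseteq E$ satisfies the five conditions of the present statement if and only if $A=E\s F$ satisfies the five conditions of Proposition~\ref{prop:dec_graph}. Since $F\mapsto E\s F$ is a bijection on the subsets of $E$, the uniqueness asserted here will then follow from the uniqueness already established in Proposition~\ref{prop:dec_graph}, and the identification of the unique solution as $E\s\AA(X)$ will be immediate.

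First I would treat the structural condition (i). The equivalence of the two formulations of the first condition of Proposition~\ref{prop:dec_graph}, rewritten with $A=E\s F$, reads: for all $b\in B\s F$ one has $C^*_M(B;b)\cap F=\emptyset$. By Property~\ref{pty:fund_graph_flat} this is exactly equivalent to $B\cap F$ being a basis of $M(F)$, equivalently to $B\s F$ being a basis of $M/F$, which is condition (i) here. Thus the first conditions of the two propositions correspond to one another, independently of the activity conditions.

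Assuming (i), I would then invoke the fundamental graph identities supplied by Property~\ref{pty:fund_graph_flat}, namely $\F-F=\F_{M/F}(B\s F)$ and $\F-(E\s F)=\F_{M(F)}(B\cap F)$. With $A=E\s F$ these read $\F-A=\F_{M(F)}(B\cap F)$ and $\F-(E\s A)=\F_{M/F}(B\s F)$. Substituting these identities, the conditions of Proposition~\ref{prop:dec_graph} that concern $\Int(\F-A)$ and $\Ext(\F-A)$ become precisely conditions (ii) and (iii) here, on $\Int_{M(F)}(B\cap F)$ and $\Ext_{M(F)}(B\cap F)$; and the conditions concerning $\Int(\F-(E\s A))$ and $\Ext(\F-(E\s A))$ become conditions (iv) and (v) here, on $\Int_{M/F}(B\s F)$ and $\Ext_{M/F}(B\s F)$. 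This establishes the claimed equivalence of the two systems of conditions, and hence the proposition, in both directions: $A=\AA(X)$ satisfies the conditions of Proposition~\ref{prop:dec_graph}, so $F=E\s\AA(X)$ satisfies the present ones; conversely any $F$ satisfying the present conditions yields, via condition (i) and the identities, a set $A=E\s F$ satisfying Proposition~\ref{prop:dec_graph}, forcing $A=\AA(X)$.

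The argument is essentially a dictionary translation, so the only genuine care required lies in the bookkeeping, which is where I expect the one potential pitfall. One must keep track of the complementation $A=E\s F$, which swaps the roles of restriction and contraction, so that $\F-A$ corresponds to $M(F)$ whereas $\F-(E\s A)$ corresponds to $M/F$; and one must apply the graph identities of Property~\ref{pty:fund_graph_flat} only after condition (i) has been secured, since those identities presuppose it. No separate verification is needed beyond these points, as all the combinatorial content is already carried by Proposition~\ref{prop:dec_graph}.
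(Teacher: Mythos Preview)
Your proposal is correct and follows essentially the same approach as the paper: both deduce the result by translating Proposition~\ref{prop:dec_graph} into matroid language via Property~\ref{pty:fund_graph_flat}, matching condition~(i) of Proposition~\ref{prop:dec_graph} with the basis condition here and then using the fundamental-graph identities to carry conditions (ii)--(v) across. Your write-up is in fact slightly more explicit than the paper's about the need to secure condition~(i) before invoking those identities, and about how uniqueness transfers through the complementation $F\leftrightarrow E\setminus F$.
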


\begin{proof}
This proposition is essentially a reformulation of 
Proposition \ref{prop:dec_graph}
in the language of matroids, using Property \ref{pty:fund_graph_flat}.
Let $F=E\s\AA(X)$.
By Proposition \ref{prop:dec_graph}, $E\s F$ is the unique subset of $E$ satisfying  properties (\ref{it1})-(\ref{it5}) stated in Proposition \ref{prop:dec_graph}.
Observe that property (\ref{it1}) is stated as:
for all $e\in E\s ((E\s F) \cup B)$, we have $C(B;e) \cap (E\s F)=\emptyset$.
That is, equivalently:
for all $e\in F\s B$, we have $C(B;e) \subseteq F$.
That is, equivalently, by Property \ref{pty:fund_graph_flat}: $B\cap F$ is a basis of $M(F)$.
Now, by  Property \ref{pty:fund_graph_flat}, properties (\ref{it2})-(\ref{it5}) of Proposition \ref{prop:dec_graph} translate directly to 
properties (\ref{it2})-(\ref{it5}) of the present result.
%
%
\end{proof}

\begin{prop} 
\label{prop:dec-base-cyclic-flat}
Let $E$ be a linearly ordered set.
Let $\F$ be a bipartite graph/tableau on $(B,E\s B)$ (or equivalently let $B$ be a basis of a matroid $M$ on $E$ with fundamental graph $\F$).
We have
$$E \ = \ \AA\Bigl(\Int(\F)\Bigr) \ \uplus \ \AA\Bigl(\Ext(\F)\Bigr).$$
\end{prop}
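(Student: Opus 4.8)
The plan is to establish the sharper identity $E\s\AA(\Int(\F))=\AA(\Ext(\F))$, from which the disjoint-union statement follows at once since the two sets are then complementary in $E$. Everything runs through the uniqueness clause of Proposition \ref{prop:dec_graph}, which was set up for exactly this purpose.

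First I would set $A=\AA(\Int(\F))$ and apply Proposition \ref{prop:dec_graph} with the full set $X=\Int(\F)$. Because $\Int(\F)\s X=\emptyset$, the five properties specialize to: (i) for all $e\in(E\s B)\s A$ one has $C(B;e)\subseteq E\s A$ (the first form of property (\ref{it1})); (ii) $\Int(\F-A)=\emptyset$; (iii) $\Ext(\F-A)=\Ext(\F)$; (iv) $\Int(\F-(E\s A))=\Int(\F)$; and (v) $\Ext(\F-(E\s A))=\emptyset$. These are the facts I will feed into the dual side.

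Next, by Observation \ref{obs:dual-closure} the set $\AA(\Ext(\F))$ is computed by Proposition \ref{prop:dec_graph} applied in the dual graph $\F^*$, where $\Int(\F^*)=\Ext(\F)$, the parts $B$ and $E\s B$ are swapped, $C$ and $C^*$ are exchanged, and $\F^*-A=(\F-A)^*$. This yields the dual characterization: $\AA(\Ext(\F))$ is the unique $A'\subseteq E$ satisfying (i*) $C(B;e)\subseteq A'$ for all $e\in(E\s B)\cap A'$; (ii*) $\Ext(\F-A')=\emptyset$; (iii*) $\Int(\F-A')=\Int(\F)$; (iv*) $\Ext(\F-(E\s A'))=\Ext(\F)$; and (v*) $\Int(\F-(E\s A'))=\emptyset$. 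The heart of the argument is then to verify that $A'=E\s A$ meets (i*)--(v*): using $E\s A'=A$, property (i*) becomes "for all $e\in(E\s B)\cap(E\s A)$, $C(B;e)\subseteq E\s A$", which is precisely (i) of $A$ since $(E\s B)\cap(E\s A)=(E\s B)\s A$; and (ii*), (iii*), (iv*), (v*) coincide respectively with the properties (v), (iv), (iii), (ii) of $A$ recorded above. By the uniqueness statement in Proposition \ref{prop:dec_graph} this forces $E\s A=\AA(\Ext(\F))$, and the proposition follows.

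I expect the only real obstacle to be bookkeeping: correctly dualizing the five properties of Proposition \ref{prop:dec_graph} and confirming that complementation pairs each property with its partner, namely (i)$\leftrightarrow$(i*), (ii)$\leftrightarrow$(v*), (iii)$\leftrightarrow$(iv*), (iv)$\leftrightarrow$(iii*), (v)$\leftrightarrow$(ii*). As an alternative avoiding Proposition \ref{prop:dec_graph}, one could argue directly by a single pass $e_1<\dots<e_n$ using the algorithmic description in Lemma \ref{lem:act-closure-algo}: by strong induction one shows each $e_i$ lands in exactly one of $\AA(\Int(\F))$, $\AA(\Ext(\F))$, the active/inactive case distinction on $e_i$ combined with the inductive hypothesis applied to the elements of $C^*(B;e_i)^<$ or $C(B;e_i)^<$ delivering both disjointness and covering simultaneously.
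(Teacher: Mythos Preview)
Your proof is correct and follows essentially the same route as the paper: apply Proposition~\ref{prop:dec_graph} with $X=\Int(\F)$ to record the five properties of $A=\AA(\Int(\F))$, then apply the same proposition in the dual $\F^*$ with $X=\Ext(\F)$, translate the resulting five conditions back to $\F$, and observe that the complement $E\setminus A$ satisfies them, whence uniqueness forces $E\setminus\AA(\Int(\F))=\AA(\Ext(\F))$. The only cosmetic difference is that the paper names the complement $E\setminus\AA(\Ext(\F))$ as $A'$ and matches its properties directly with those of $A$, whereas you name $\AA(\Ext(\F))$ itself as $A'$ and check that $E\setminus A$ fits; the pairing of properties you describe, $(\mathrm{i})\leftrightarrow(\mathrm{i}^*)$ and $(\mathrm{ii},\mathrm{iii},\mathrm{iv},\mathrm{v})\leftrightarrow(\mathrm{v}^*,\mathrm{iv}^*,\mathrm{iii}^*,\mathrm{ii}^*)$, is exactly what the paper exploits.
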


\begin{proof}
By Proposition \ref{prop:dec_graph}, $\AA(\Int(\F))$ is the unique subset $A\subseteq E$ such that:
\vspace{-2mm}
\begin{itemize}[-]
\partopsep=0mm \topsep=0mm \parsep=0mm \itemsep=-1mm
\item  for all $b\in B\cap A$, we have $C^*(B;b)\subseteq A$;
\item $\Int(\F-A)=\emptyset$;
\item $\Ext(\F-A)=\Ext(\F)$;
\item $\Int(\F-E\s A)=\Int(\F)$;
\item $\Ext(\F-E\s A)=\emptyset$.
\end{itemize}
\vspace{-2mm}
Now we apply Proposition \ref{prop:dec_graph} to $\F^*$, bipartite graph on $(E\s B,B)$, with $X=\Ext(\F)=\Int(\F^*)$. We get that $E\s \AA(\Ext(\F))$ is the unique subset $E\s A'$ of $E$ such that the following properties hold, where we replace the statements of properties of $\F^*$ with equivalent statements for $\F$:
\vspace{-2mm}
\begin{itemize}[-]
\partopsep=0mm \topsep=-1mm \parsep=0mm \itemsep=-1mm
\item for all $e\in E\s ((E\s A') \cup (E\s B))$, we have $C_{\F^*}(B;e) \cap (E\s A')=\emptyset$;
\item that is equivalently: for all $e\in A'\cap B$, we have $C^*_\F(B;e) \subseteq A'$;
\item $\Ext(\F-E\s A')=\emptyset$;
\item $\Int(\F-E\s A')=\Int(\F)$;
\item $\Ext(\F-A')=\Ext(\F)$;
\item $\Int(\F-A')=\emptyset$.
\end{itemize}
\vspace{-2mm}
Finally, the properties satisfied by $A$ and by $A'$ are exactly the same, hence $A=A'$ by uniqueness in Proposition \ref{prop:dec_graph}, that is $\AA(\Int(\F))=E\s \AA(\Ext(\F))$.
%
\end{proof}

%

\begin{definition}
\label{def:dec-cyclic-flat}
Let $E$ be a linearly ordered set.
Let $\F$ be a bipartite graph/tableau on $(B,E\s B)$ (or equivalently let $B$ be a basis of a matroid $M$ on $E$ with fundamental graph $\F$).
The set $F=\AA(\Ext(\F))$ is called \emph{the external part of $E$  w.r.t. $\F$}, and the set $E\s F=\AA(\Int(\F))$ is called \emph{the internal part of $E$ w.r.t. $\F$}. 
Observe that, in the case where $B$ is a basis of a matroid $M$, $F_c$ is  a cyclic flat of $M$ (as $\AA(\Ext(\F))$, resp.  $\AA(\Int(\F))$, is a union of circuits, resp. cocircuits).
\end{definition}

From Proposition \ref{prop:dec-base-cyclic-flat}, using the formulation used in Proposition \ref{prop:dec_base}, we directly retrieve the following result from \cite{EtLV98} (in an equivalent form).
Let us mention that we complete it with a practical characterization in Corollary \ref{cor::dec-base-cyclic-flat-algo} below. 

\begin{cor}[\cite{EtLV98}]
\label{cor:dec-cyclic-flat}
Let $B$ be a basis of a matroid $M$ on a linearly ordered set $E$ with fundamental graph $\F$. Let $F_c$ be the external part of $E$ w.r.t. $\F$.
The subset $F_c$ is the unique subset (or cyclic flat) $F$ of $M$ such that:
\begin{enumerate}[(i)]
\item $B\cap F$ is a basis of $M(F)$, and  $B\s F$ is a basis of $M/F$,\par
\item  $\Int_{M(F)}(B\cap F)=\emptyset$,\par 
\item  $\Ext_{M(F)}(B\cap F)=\Ext_M(B)$,\par
\item  $\Int_{M/F}(B\s F)=\Int_M(B)$,\par
\item  $\Ext_{M/F}(B\s F)=\emptyset$.\hfill\square
\end{enumerate}
\end{cor}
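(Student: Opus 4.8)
The plan is to derive this corollary directly from Proposition~\ref{prop:dec-base-cyclic-flat} together with Proposition~\ref{prop:dec_base}, rather than re-proving anything from scratch. The key observation is that the ``external part'' $F_c=\AA(\Ext(\F))$ is exactly the set produced by applying Proposition~\ref{prop:dec_base} to the dual situation, so all five desired properties are already available once the statements are translated between $\F$ and $\F^*$.

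First I would set $X=\Ext(\F)=\Int(\F^*)$ and apply Proposition~\ref{prop:dec_base} to the dual basis $E\s B$ in $M^*$ (whose fundamental graph is $\F^*$). That proposition, applied to $\F^*$, produces a unique subset characterized by the five activity conditions, and by the duality $\AA_\F(X)=\AA_{\F^*}(X)$ from Observation~\ref{obs:dual-closure}, this subset is precisely $F_c=\AA(\Ext(\F))$. Translating each dual condition back to $\F$ via the identities $\Int_{M^*}(E\s B)=\Ext_M(B)$ and $\Ext_{M^*}(E\s B)=\Int_M(B)$ (and the corresponding minor duality $\bigl(M(F)/G\bigr)^*=M^*(E\s G)/(E\s F)$), the five statements become exactly properties (i)--(v) of the corollary. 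Alternatively, and more cleanly, I would simply read off the five properties satisfied by $A'=\AA(\Ext(\F))$ that are already listed explicitly in the proof of Proposition~\ref{prop:dec-base-cyclic-flat}, and then invoke Property~\ref{pty:fund_graph_flat} to convert the bipartite-graph activity statements $\Int(\F-A')$, $\Ext(\F-A')$, etc., into the matroid statements $\Int_{M(F)}(B\cap F)$, $\Ext_{M/F}(B\s F)$, and so on.

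The uniqueness claim is then inherited directly: Proposition~\ref{prop:dec_graph} (equivalently Proposition~\ref{prop:dec_base}) already asserts that the subset satisfying the five conditions is unique, so no separate uniqueness argument is needed. To justify the parenthetical remark that $F_c$ may be taken to be a cyclic flat, I would cite the reasoning in Definition~\ref{def:dec-cyclic-flat}: $\AA(\Ext(\F))$ is a union of fundamental circuits (hence a dual-flat) and $\AA(\Int(\F))=E\s F_c$ is a union of fundamental cocircuits (hence $F_c$ is a flat), so $F_c$ is simultaneously a flat and a dual-flat, i.e.\ a cyclic flat.

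The only genuine subtlety, and the step I expect to require the most care, is the bookkeeping in the duality translation: one must check that property (i) of Proposition~\ref{prop:dec_base} for the dual (``$(E\s B)\cap(E\s F)$ is a basis of $M^*(E\s F)$'') is equivalent to property (i) here (``$B\cap F$ is a basis of $M(F)$''), and that swapping $\Int\leftrightarrow\Ext$ together with the restriction/contraction swap under dualization sends each of (ii)--(v) to the correspondingly numbered statement and not to a permuted one. This is routine given Property~\ref{pty:fund_graph_flat} and the dual activity identities recalled in Section~\ref{sec:prelim}, but it is where an off-by-one swap of internal and external activity could slip in, so I would verify the matching condition by condition. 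Everything else is a direct quotation of the two preceding propositions.
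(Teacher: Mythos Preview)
Your proposal is correct and matches the paper's approach: the paper states the corollary as an immediate consequence of Proposition~\ref{prop:dec-base-cyclic-flat} together with the matroid formulation in Proposition~\ref{prop:dec_base}, with no further proof. One minor simplification worth noting: rather than passing to the dual, you can apply Proposition~\ref{prop:dec_base} directly with $X=\Int(\F)$, so that $F=E\setminus\AA(\Int(\F))$ satisfies (i)--(v) with $\Int_M(B)\setminus X=\emptyset$, and then invoke Proposition~\ref{prop:dec-base-cyclic-flat} to identify $F=F_c$; this avoids the permutation of conditions (ii)$\leftrightarrow$(v), (iii)$\leftrightarrow$(iv) that the dual route forces you to track.
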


\begin{cor}
\label{cor::dec-base-cyclic-flat-algo}
Let $E$ be a linearly ordered set.
Let $\F$ be a bipartite graph/tableau on $(B,E\s B)$ (or equivalently let $B$ be a basis of a matroid $M$ on $E$ with fundamental graph $\F$).
The partition of $E$ into internal and external parts w.r.t. $\F$ is given by the following definition (yielding a linear algorithm by a single pass over $E$ in increasing order).

\begin{algorithme} 
\hskip 1cm 
If $e\in B$: if there exists $c<e$ external in $C^*(B;e)$ then $e$ is external\par
\hskip 1cm 
$\hphantom{\hbox{for $e\in B$:}}$ otherwise $e$ is internal\par
\hskip 1cm
If $e\not\in B$: if there exists $c<e$ internal in $C(B;e)$ then $e$ is internal,\par
\hskip 1cm $\hphantom{\hbox{for $e\not\in B$:}}$ otherwise $e$ is external\par
\end{algorithme}
\end{cor}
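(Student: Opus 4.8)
The plan is to reduce the statement to a direct application of Lemma~\ref{lem:act-closure-algo} with $X=\Int(\F)$. First I would recall, from Definition~\ref{def:dec-cyclic-flat} together with Proposition~\ref{prop:dec-base-cyclic-flat}, that an element is internal precisely when it lies in $\AA(\Int(\F))$ and external precisely when it lies in $\AA(\Ext(\F))$, and that these two sets partition $E$. In particular, when the elements of $E$ are scanned in increasing order, every element strictly smaller than the current one has already received exactly one of the two labels. Hence it suffices to show that running the algorithm of Lemma~\ref{lem:act-closure-algo} on $X=\Int(\F)$ produces exactly the labelling prescribed by the displayed rules.

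Next I would specialize the cases of Lemma~\ref{lem:act-closure-algo} to $X=\Int(\F)$ and match them with the two displayed rules. For $e\in B$, the case $e\in X$ now reads ``$e$ is internally active'', and then $C^*(B;e)^<=\emptyset$, so the displayed test finds no external $c<e$ in $C^*(B;e)$ and declares $e$ internal, in agreement. If instead $e\in B$ is not internally active, the corresponding case of the lemma puts $e$ into $\AA(\Int(\F))$ exactly when every $c<e$ in $C^*(B;e)$ already lies in $\AA(\Int(\F))$, that is, is internal; by the partition this is equivalent to saying that no $c<e$ in $C^*(B;e)$ is external, which is precisely the contrapositive of the displayed rule for $e\in B$. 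For $e\notin B$, the corresponding case of the lemma puts $e$ into $\AA(\Int(\F))$ if and only if some $c\in C(B;e)$ with $c<e$ is already internal, matching verbatim the displayed rule for $e\notin B$.

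Finally I would observe that all the tests above refer only to elements $c<e$, whose labels are already determined when $e$ is reached in the increasing scan; this is exactly what makes a single forward pass correct and linear, and completes the identification of the output with $\AA(\Int(\F))$, hence with the internal/external partition. The only point requiring care---and the main, albeit small, obstacle---is the bookkeeping around the degenerate cases. One must check that ``not internally active'' forces $C^*(B;e)^<\neq\emptyset$ (otherwise $e$, which always belongs to $C^*(B;e)$, would be its minimum and thus internally active), so that the vacuous-quantifier branch of the lemma coincides with the internally active case; and dually that an externally active $e\notin B$ has $C(B;e)^<=\emptyset$ and is therefore correctly output as external. Once these are verified, the equivalence of the algorithmic description with the active closure is immediate.
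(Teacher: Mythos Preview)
Your proposal is correct and follows essentially the same approach as the paper: apply Lemma~\ref{lem:act-closure-algo} with $X=\Int(\F)$, handle the active elements via the observation that $C^*(B;e)^<=\emptyset$ (resp.\ $C(B;e)^<=\emptyset$) when $e$ is internally (resp.\ externally) active, and invoke Proposition~\ref{prop:dec-base-cyclic-flat} (the partition $E=\AA(\Int(\F))\uplus\AA(\Ext(\F))$) to conclude that ``not internal'' is the same as ``external''. Your write-up is somewhat more explicit about the case matching and the degenerate cases than the paper's terse three-line proof, but the ingredients and the logic are identical.
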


\begin{proof}
Observe that if $e$ is internally, resp. externally, active then $C^*(B;e)^<=\emptyset$, resp. $C(B;e)^<=\emptyset$, and then $e$ is internal, resp. external.
Then, the computation of the internal part comes directly from Lemma \ref{lem:act-closure-algo} applied to $X=\Int(\F)$. 
The other cases, where $e$ is not internal, imply that $e$ is external, equivalently either by duality (the cases are dual), or by Proposition \ref{prop:dec-base-cyclic-flat}.
\end{proof}

\begin{lemma}
\label{lem:closure-in-internal-part}
Let $E$ be a linearly ordered set.
Let $\F$ be a bipartite graph/tableau on $(B,E\s B)$ (or equivalently let $B$ be a basis of a matroid $M$ on $E$ with fundamental graph $\F$).
Let $X\subseteq \Int(\F)$. Let $F_c$ be the external part of $E$ w.r.t. $\F$.
We have
$$\mathlarger{\AA_\F(X)\ =\ \AA_{\F-F_c}(X)}.$$
Moreover, the external part of $E\s \AA_\F(X)$ w.r.t. $\F-\AA_\F(X)$ is also $F_c$.
\end{lemma}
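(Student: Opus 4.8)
The plan is to prove the two assertions in turn: the equality $\AA_\F(X)=\AA_{\F-F_c}(X)$ by a direct induction based on the single-pass description of the active closure in Lemma~\ref{lem:act-closure-algo}, and the \emph{moreover} statement by combining the splitting formula of Lemma~\ref{lem:act-closure-part} with the partition of Proposition~\ref{prop:dec-base-cyclic-flat}.

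For the first equality I would first record two facts. Since the active closure is monotone in its seed (the closure rules in Definition~\ref{def:dec_operation} do not depend on $X$) and $X\subseteq\Int(\F)$, we get $\AA_\F(X)\subseteq\AA_\F(\Int(\F))=E\s F_c$ by Definition~\ref{def:dec-cyclic-flat}; thus the closure of $X$ never meets $F_c$, and $\AA_{\F-F_c}(X)\subseteq E\s F_c$ holds trivially because $E\s F_c$ is the ground set of $\F-F_c$. Second, property~(\ref{it1}) of Proposition~\ref{prop:dec_graph}, applied to $A=\AA(\Int(\F))=E\s F_c$, gives $C^*_\F(B;b)\subseteq E\s F_c$ for every $b\in B\s F_c$, so the fundamental cocircuit of such a $b$, and hence its internal activity, is unchanged when passing to $\F-F_c$. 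With these in hand I would run the procedure of Lemma~\ref{lem:act-closure-algo} in parallel on $\F$ (over all of $E$) and on $\F-F_c$ (over $E\s F_c$), showing by induction on $E=e_1<\dots<e_n$ that the two runs make the same decision on every $e_i\in E\s F_c$, while every $e_i\in F_c$ is rejected by the run on $\F$. For $e_i\in X$ the decision is immediate in both. For $e_i\in B\s F_c$ the decision depends only on $C^*_\F(B;e_i)^<\subseteq E\s F_c$, identical in both graphs by the second fact.

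The step I expect to need the most care is the external case $e_i\in(E\s B)\s F_c$, where the fundamental circuit $C_\F(B;e_i)$ may genuinely intersect $F_c$, so the circuits really do differ between $\F$ and $\F-F_c$. The point to exploit is that the decision rule only asks for the \emph{existence} of some $c\in C_\F(B;e_i)$ with $c<e_i$ lying in the closure; since both closures are contained in $E\s F_c$, any such witness automatically avoids $F_c$, i.e.\ lies in $C_\F(B;e_i)\s F_c$, which is exactly the fundamental circuit of $e_i$ in $\F-F_c$, and is decided the same way in both runs by the induction hypothesis. Hence the existence conditions coincide, the two inductions agree on $E\s F_c$, and the first equality follows.

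For the moreover part, set $A=\AA_\F(X)$. Applying Lemma~\ref{lem:act-closure-part} to the splitting $\Int(\F)=X\uplus(\Int(\F)\s X)$ yields $E\s F_c=\AA_\F(\Int(\F))=A\uplus\AA_{\F-A}(\Int(\F)\s X)$. By properties~(\ref{it2}) and~(\ref{it3}) of Proposition~\ref{prop:dec_graph} for $A$, we have $\Int(\F-A)=\Int(\F)\s X$ and $\Ext(\F-A)=\Ext(\F)$, so $I':=\AA_{\F-A}(\Int(\F)\s X)$ is precisely the internal part of $E\s A$ w.r.t.\ $\F-A$. Proposition~\ref{prop:dec-base-cyclic-flat} applied to $\F-A$ then writes $E\s A=I'\uplus F_c'$, where $F_c'=\AA_{\F-A}(\Ext(\F-A))=\AA_{\F-A}(\Ext(\F))$ is the external part sought. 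On the other hand, since $A\subseteq E\s F_c$, the partition $E=(E\s F_c)\uplus F_c$ and the displayed splitting $E\s F_c=A\uplus I'$ give $E\s A=I'\uplus F_c$. Comparing the two partitions $E\s A=I'\uplus F_c'=I'\uplus F_c$, which share the first block $I'$, forces $F_c'=F_c$, as required.
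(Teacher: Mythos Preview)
Your proof is correct. For the first assertion you follow essentially the same idea as the paper: establish $\AA_\F(X)\cap F_c=\emptyset$ and then argue that the algorithmic computation of $\AA(X)$ is unaffected by removing $F_c$. The paper invokes Lemma~\ref{lem:act-closure-algo-bas} (iteration over basis elements) with a one-line ``one sees directly'', whereas you run Lemma~\ref{lem:act-closure-algo} (the single pass over all of $E$) and spell out the case analysis, in particular the delicate case $e_i\in(E\setminus B)\setminus F_c$ where the fundamental circuit may meet $F_c$ but any admissible witness cannot. This is the same argument, only made explicit.

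For the \emph{moreover} part your route genuinely differs from the paper's. The paper argues by duality: since $\AA_{\F^*}(\Ext(\F))=F_c$ is disjoint from $\AA_\F(X)$, the very same ``removing a disjoint set does not change the algorithm'' reasoning, applied in $\F^*$, yields $\AA_{\F}(\Ext(\F))=\AA_{\F-\AA_\F(X)}(\Ext(\F-\AA_\F(X)))$ directly. You instead compute the \emph{internal} part of $\F-A$ via the splitting formula of Lemma~\ref{lem:act-closure-part} together with Proposition~\ref{prop:dec_graph}(\ref{it2}), obtain $E\setminus F_c=A\uplus I'$, and then compare two partitions of $E\setminus A$ coming from Proposition~\ref{prop:dec-base-cyclic-flat} to force $F_c'=F_c$. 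The paper's duality argument is shorter and more symmetric; your argument avoids the dual passage and makes the role of Lemma~\ref{lem:act-closure-part} visible, which is perhaps more transparent if one later wants to iterate the construction.
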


\begin{proof}
We have $\AA(\Int(\F))\cap \AA(\Ext(\F))=\emptyset$ (Proposition \ref{prop:dec-base-cyclic-flat}), hence $\AA(X)\cap F_c=\emptyset$.
Then, first, one sees directly that the computation of $\AA(X)$ given by Lemma \ref{lem:act-closure-algo-bas} yields the same result as if it is applied to $\F-F_c$.
So $\AA_\F(X)\ =\ \AA_{\F-F_c}(X)$.
And, second, for the same reason, 
the computation of $\AA(\Ext(\F))=\AA(\Int(\F^*))$ given by Lemma \ref{lem:act-closure-algo-bas} applied to $\F^*$ yields the same result as if it is applied to $\F^*-\AA(X)$.
So $\AA_\F(\Ext(\F))\ =\ \AA_{\F-\AA(X)}(\Ext(\F-\AA(X)))$.
\end{proof}

\begin{definition}
\label{def:act-seq-dec-fund-graph}
Let $E$ be a linearly ordered set.
Let $\F$ be a bipartite graph/tableau on $(B,E\s B)$, 
(or equivalently let $B$ be a basis of a matroid $M$ on $E$ with fundamental graph $\F$),
with $\io$ internally active elements $a_1<...<a_\io$ and $\ep$ externally active elements $a'_1<...<a'_\ep$.
The \emph{active filtration of $\F$} (or $B$) is the sequence of subsets $(F'_\ep, \ldots, F'_0, F_c , F_0, \ldots, F_\io)$ of $E$ 
defined by the following:
$$F_c=\AA(\Ext(\F))=E\s \AA(\Int(\F));$$
$F_\io=E$, and
for every $0\leq k\leq\io-1$, 
$$F_k=E\setminus \AA(\{a_{k+1},\dots, a_\io\});$$
$F'_\ep=\emptyset$,  and for every $0\leq k\leq\ep-1$, 
$$F'_k=\AA(\{a'_{k+1},\dots, a'_\ep\}).$$
\end{definition}

%
%
%
%

\begin{lemma}
\label{lem:pty-act-dec-seq-bas}
Using the above notations,
 we have $$\emptyset= F'_\ep\subset...\subset F'_0=F_c=F_0\subset...\subset F_\io= E.$$
The active filtration of $\F$ is a filtration of $E$ (Definition~\ref{def:general-filtration}).
Moreover, we have, for $1\leq k\leq\io$, 
\begin{align*}
&& F_k\setminus F_{k-1}&= \AA(\{a_k,\dots,a_\io\})\s \AA(\{a_{k+1},\dots,a_\io\})\\
&&&=\AA_{\F-(E\s F_k)}(\{a_k\}),\\
&&\min(F_k\setminus F_{k-1})&=a_k,
\end{align*}
and, for $1\leq k\leq \ep$,  
\begin{align*}
&&F'_{k-1}\setminus F'_{k}&=
 \AA(\{a'_k,\dots,a'_\ep\})\s \AA(\{a'_{k+1},\dots,a_\ep\})\\
&&&=\AA_{\F-F'_{k}}(\{a'_k\}),\\
&&\min(F'_{k-1}\setminus F'_{k})&=a'_k.
\end{align*}
%
%
Moreover, in the case where $B$ is a basis of a matroid $M$,  we have:
\begin{itemize}
\partopsep=0mm \topsep=0mm \parsep=0mm \itemsep=0mm
\item for $0\leq k\leq\io$, $F_k$ satisfies the properties of Property \ref{pty:fund_graph_flat}, and $F_k$ is a flat of $M$;
\item for $0\leq k\leq\ep$, $F'_k$ satisfies the properties of Property \ref{pty:fund_graph_flat}, and $F'_k$ is a dual-flat of $M$.
\end{itemize}
In particular, $F_0=F_c=F'_0$ is a cyclic-flat of $M$.

\end{lemma}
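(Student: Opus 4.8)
The plan is to assemble everything from the structural lemmas already in hand, chiefly the partition property of the active closure (Lemma \ref{lem:act-closure-part}), its minor characterization (Proposition \ref{prop:dec_base}), and the internal/external decomposition (Proposition \ref{prop:dec-base-cyclic-flat}). Duality (Observation \ref{obs:dual-closure}) then reduces the externally active half to the internally active one, so I would prove the statements about the $F_k$ and deduce those about the $F'_k$ by applying the results to $\F^*$, whose internally active elements are precisely $a'_1<\dots<a'_\ep$.

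First I would establish the chain of inclusions. Monotonicity of $\AA$ on subsets of $\Int(\F)$ is immediate from Lemma \ref{lem:act-closure-part}: writing $\{a_k,\dots,a_\io\}=\{a_{k+1},\dots,a_\io\}\uplus\{a_k\}$ gives $\AA(\{a_{k+1},\dots,a_\io\})\subseteq\AA(\{a_k,\dots,a_\io\})$, whence $F_{k-1}\subseteq F_k$. Strictness comes from Lemma \ref{lem:closure-easy}, which forces $a_k\in\AA(\{a_k,\dots,a_\io\})$ but $a_k\notin\AA(\{a_{k+1},\dots,a_\io\})$, so $a_k\in F_k\s F_{k-1}$. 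The junction $F_0=F_c$ is exactly Proposition \ref{prop:dec-base-cyclic-flat} together with the definitions, since $\AA(\{a_1,\dots,a_\io\})=\AA(\Int(\F))$; the dual chain and $F'_0=F_c$ follow the same way in $\F^*$.

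Next, the difference formula. Feeding the same decomposition $\{a_k,\dots,a_\io\}=\{a_{k+1},\dots,a_\io\}\uplus\{a_k\}$ into Lemma \ref{lem:act-closure-part} yields $\AA(\{a_k,\dots,a_\io\})=\AA(\{a_{k+1},\dots,a_\io\})\uplus\AA_{\F-\AA(\{a_{k+1},\dots,a_\io\})}(\{a_k\})$. Since $E\s F_k=\AA(\{a_{k+1},\dots,a_\io\})$, the second summand is $\AA_{\F-(E\s F_k)}(\{a_k\})$, and taking complements identifies it with the set difference $F_k\s F_{k-1}$. For the minimum I would invoke the algorithmic description in Lemma \ref{lem:act-closure-algo}: the closure of an internally active singleton enlarges the current set only through a propagation step that requires a strictly smaller element to be present already, so the least element of $\AA_{\F'}(\{a_k\})$, in any restriction $\F'$ in which $a_k$ is still internally active, is $a_k$ itself. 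As deleting the vertices of $E\s F_k$ only shrinks fundamental cocircuits and $a_k\in F_k$, the element $a_k$ does remain internally active in $\F-(E\s F_k)$; hence $\min(F_k\s F_{k-1})=a_k$. Since $a_1<\dots<a_\io$ the sequence of minima is increasing, and with the dual statement for the $a'_k$ all conditions of Definition \ref{def:general-filtration} hold.

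Finally, for the matroid-theoretic properties I would apply Proposition \ref{prop:dec_base} with $X=\{a_{k+1},\dots,a_\io\}$: this shows directly that $F_k=E\s\AA(X)$ satisfies the equivalent conditions of Property \ref{pty:fund_graph_flat} and that $\Ext_{M/F_k}(B\s F_k)=\emptyset$. To conclude $F_k$ is a flat, I would note that a loop of $M/F_k$ must be a non-basis element $e\in E\s F_k$ with $C_M(B;e)\s F_k=\{e\}$; since $\F-F_k=\F_{M/F_k}(B\s F_k)$ by Property \ref{pty:fund_graph_flat}, such an $e$ would be externally active in $M/F_k$, contradicting $\Ext_{M/F_k}(B\s F_k)=\emptyset$. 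Thus $M/F_k$ is loopless and $F_k$ is a flat. The dual-flat statement for $F'_k$ follows by applying this to $\F^*$ (its complement is then a flat of $M^*$), and $F_0=F_c=F'_0$, being at once a flat and a dual-flat, is a cyclic-flat. The main obstacle is bookkeeping: correctly matching the decomposition $X=Y\uplus Z$ to Lemma \ref{lem:act-closure-part}, keeping primal/dual roles aligned through Observation \ref{obs:dual-closure}, and pinning down the minimum, which is the single place needing a genuine (if short) argument from the closure algorithm rather than a direct citation.
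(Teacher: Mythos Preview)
Your proof is correct and follows essentially the same route as the paper: monotonicity and strictness of the $\AA$-chain, then Lemma~\ref{lem:act-closure-part} for the difference formula and the identification $F_k\setminus F_{k-1}=\AA_{\F-(E\setminus F_k)}(\{a_k\})$, the minimum via the closure algorithm, and finally Property~\ref{pty:fund_graph_flat} via the characterization of $\AA$. The only small divergence is the flat argument: the paper simply notes that $E\setminus F_k=\AA(\{a_{k+1},\dots,a_\io\})$ is by construction a union of fundamental cocircuits (hence $F_k$ is a flat), whereas you deduce looplessness of $M/F_k$ from $\Ext_{M/F_k}(B\setminus F_k)=\emptyset$; both are immediate and equivalent.
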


\begin{proof}
Since $\AA$ is increasing, for $1\leq k\leq\io$, we have $\AA(\{a_{k+1},\dots,a_\io\})\subseteq \AA(\{a_{k},\dots,a_\io\})\subseteq \AA(\Int(\F))$, and, by definition of $\AA$,  $a_k\in \AA(\{a_{k},\dots,a_\io\})\s \AA(\{a_{k+1},\dots,a_\io\})$.
So
$F_c\subseteq F_{k-1}\subset F_k$. 
And dually, we have, for $1\leq k\leq\ep$, $F'_{k}\subset F_{k-1}\subseteq F_c$.
So we have $\emptyset= F'_\ep\subset...\subset F'_0=F_c=F_0\subset...\subset F_\io= E$.

 Moreover $a_k=\min(\AA(\{a_{k},\dots,a_\io\})$ so  $a_k=\min(F_k\setminus F_{k-1})$, $1\leq k\leq\io$, which is increasing with $k$ by hypothesis.
And, dually, we have $a'_k=\min(F'_{k-1}\setminus F'_k)$, $1\leq k\leq\ep$, which is increasing with $k$ by hypothesis.
So the active filtration is a filtration of $E$, according to Definition \ref{def:general-filtration}.

Moreover, by Lemma \ref{lem:act-closure-part}, we have 
$ \AA(\{a_k,\dots,a_\io\})= \AA(\{a_{k+1},\dots,a_\io\})\uplus \AA_{\F-(E\s F_k)}(a_k)$.
And by Lemma \ref{lem:act-closure-part} applied to $\F^*$, we have 
$ \AA(\{a'_k,\dots,a'_\ep\})= \AA(\{a'_{k+1},\dots,a'_\ep\})\uplus \AA_{\F-F'_k}(a'_k)$.

For $0\leq k\leq\io$, by Proposition \ref{prop:dec_graph},
we have that $F_k$ satisfies:
for all $b\in B\cap (E\s F_k)$, we have $C^*(B;b)\subseteq (E\s F_k)$.
That is: for all $b\in B\s F_k$, $C^*_M(B;b)\cap F_k=\emptyset$.
Hence $F_k$ satisfies properties of Property \ref{pty:fund_graph_flat}.
 And $F_k$ is a flat of $M$ as its complement is a union of cocircuits.
 
For $0\leq k\leq\ep$, by Proposition \ref{prop:dec_graph} applied to the dual $\F^*$ (or  the basis $E\s B$ of $M^*$),
we have that $F'_k$ satisfies:
for all $e\in (E\s B)\cap F'_k$, we have $C^*_{M^*}(E\s B;e)\subseteq F'_k$.
That is:
 for all $e\in F'_k\s B$, we have $C_M(B;e)\subseteq F'_k$.
 Hence $F'_k$ satisfies properties of Property \ref{pty:fund_graph_flat}.
 And $F'_k$ is a dual-flat of $M$ as it is a union of circuits.
 \end{proof}

\begin{definition}
\label{def:act-part-bas}
Using the above notations, the active filtration of $\F$ (or $B$), induces a partition of the ground set $E$, which we call \emph{the active partition of $\F$ (or $B$)}:
$$E= (F'_{\ep-1}\s F'_\ep)\ \uplus\ \dots \ \uplus\ (F'_{0}\s F'_1)\ \uplus\ (F_1\s F_0)\ \uplus\ \dots \ \uplus\ (F_{\io}\s F_{\io-1}).$$
\emevder{je viens juste au dernier moment de definir active minors dans ce papier, terminoogie jamais utilisee dans le papier ! mais mis par coherence avec AB2-b}%

\noindent Also, we call \emph{active minors w.r.t. $\F$ (or $B$)} the minors induced by the active filtration of $\F$ (or $B$), that is
 the minors $\M(F_k)/F_{k-1}$ for $1\leq k\leq\io$,
and the minors
$\M(F'_{k-1})/F'_{k}$ for all $1\leq k\leq \ep$.
\end{definition}


\begin{observation}
The active partition of $\F$ (or $B$) 
determines the active filtration of $\F$ (or $B$), hence it is an equivalent notion.
Precisely, using the above notations, knowing only the subsets forming the active partition of $\F$ (or $B$) allows us to build:
\vspace{-1mm}
\begin{itemize}[-]
\partopsep=-1mm \topsep=-1mm \parsep=-1mm \itemsep=-1mm
\item the subset $F_c$ of the active filtration of $\F$ (or $B$), since the smallest element of a part is in $B$ if and only if this part is of type
$F_k\setminus F_{k-1}$ for some $1\leq k\leq\io$;
\item the active filtration of $\F$ (or $B$), since the sequence $\min(F_k\setminus F_{k-1})$, $1\leq k\leq\io$,  is increasing with $k$, and the sequence  $\min(F'_{k-1}\setminus F'_k)$, $1\leq k\leq\ep$, is increasing with $k$, so that the position of each part of the active partition with respect to the active filtration is identified.
\end{itemize}
\end{observation}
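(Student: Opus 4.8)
The plan is to treat this observation as a direct bookkeeping consequence of Lemma \ref{lem:pty-act-dec-seq-bas}, which already pins down the minimum of each part of the active partition. Recall that, by Definition \ref{def:act-part-bas}, the active partition is exactly the collection of consecutive differences of the nested chain forming the active filtration. Reconstructing the chain from the mere (unordered) collection of parts therefore amounts to recovering, for each part, its position in the sequence; and since we work with a fixed $\F$, we may use the bipartition $E=B\uplus(E\s B)$ and the linear order on $E$.

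First I would classify the parts. By Lemma \ref{lem:pty-act-dec-seq-bas}, for $1\le k\le\io$ we have $\min(F_k\s F_{k-1})=a_k\in\Int(\F)\subseteq B$, whereas for $1\le k\le\ep$ we have $\min(F'_{k-1}\s F'_k)=a'_k\in\Ext(\F)\subseteq E\s B$. Hence a part is of type $F_k\s F_{k-1}$ (an \emph{internal} part) exactly when its smallest element lies in $B$, and of type $F'_{k-1}\s F'_k$ (an \emph{external} part) exactly when its smallest element lies in $E\s B$; the two cases are mutually exclusive and exhaustive. This immediately yields $F_c$, since $F_c=F'_0=\biguplus_{1\le k\le\ep}(F'_{k-1}\s F'_k)$ is the union of all external parts, equivalently $E$ minus the union of all internal parts.

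Next I would recover the chain itself. Among the internal parts, the minima $a_1<\dots<a_\io$ are strictly increasing by the filtration property (Lemma \ref{lem:pty-act-dec-seq-bas} together with Definition \ref{def:general-filtration}); ordering the internal parts by increasing smallest element therefore reproduces the indexing $k$, and the cumulative unions $F_k=F_c\cup\bigcup_{1\le j\le k}(F_j\s F_{j-1})$ produce all the $F_k$. Dually, ordering the external parts by increasing smallest element reproduces the indexing via $a'_1<\dots<a'_\ep$, and $F'_k=F_c\s\bigcup_{1\le j\le k}(F'_{j-1}\s F'_j)$ produces all the $F'_k$. This rebuilds the full sequence $\emptyset= F'_\ep\subset\dots\subset F'_0=F_c=F_0\subset\dots\subset F_\io=E$, so the partition determines the filtration.

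There is no genuine obstacle here: the whole content already sits in the established facts that each part's minimum is characterized by its membership in $B$ and that these minima are strictly increasing along each side of the chain. The only point requiring a line of care is that the two orderings are unambiguous, i.e. that no two internal (resp.\ external) parts share a smallest element; this is guaranteed by strictness of $a_1<\dots<a_\io$ and $a'_1<\dots<a'_\ep$, so the reconstruction above is well-defined and inverse to Definition \ref{def:act-part-bas}.
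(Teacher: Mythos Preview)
Your proposal is correct and follows exactly the reasoning the paper itself embeds in the statement of the observation (the paper gives no separate proof, treating it as immediate from Lemma \ref{lem:pty-act-dec-seq-bas}). You have simply made the bookkeeping explicit: identify internal versus external parts by whether their minimum lies in $B$, then use the strict increase of the minima $a_1<\dots<a_\io$ and $a'_1<\dots<a'_\ep$ to recover the indexing and rebuild the chain by cumulative unions.
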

%

From a constructive viewpoint, let us remark that, by Lemma \ref{lem:pty-act-dec-seq-bas}, and more generally by Lemma \ref{lem:act-closure-part}, the active partition of $\F$ can be computed directly from $\F$, or also from the successive subgraphs of $\F$ induced by the active filtration of $\F$,
computing the active closure of active elements one by one
(or also from successive corresponding minors in a matroid setting, by Property \ref{pty:fund_graph_flat}, as made explicit in next Theorem~\ref{thm:unique-dec-seq-bas}).

Moreover, and more practically, Proposition \ref{prop:basori-partact} (postponed at the end of the paper) gives a direct construction of the active partition by a linear  algorithm consisting in a single pass over $E$.
\ss


Finally, let us notice that, in the definitions that precede and the results that follow, the particular case of internal fundamental graphs (or internal bases) is addressed as the case where $F_c=\emptyset$, and case of external fundamental graphs (or external bases) is addressed as the case where $F_c=E$.
Those cases are dual to each other. Let us deepen this with the next observation,
which comes directly from Observation \ref{obs:dual-closure} (for duality), and from Lemma \ref{lem:closure-in-internal-part} (for restriction to $\F-(E\s F_c)$ or dually to $\F-F_c$).
It will be deepened again in Observation \ref{lem:dec-seq-bas-observation-suite}.
%

\begin{observation}
\label{lem:dec-seq-bas-observation}
Using the above notations,
let $\emptyset= F'_\ep\subset...\subset F'_0=F_c=F_0\subset...\subset F_\io= E$ be the active filtration of $\F$ (or $B$), with external part $F_c$. 
We have: 
\begin{enumerate}
\item 
\label{item:sec-dec-act-dual-bas}
$\emptyset= E\s F_\io\subset...\subset E\s F_0=E\s F_c=E\s F'_0\subset...\subset E\s F'_\ep= E$ is the active filtration of $\F^*$ (or of the basis $E\s B$ of $M^*$), with external part $E\s F_c$; 
\item 
\label{item:sec-dec-act-external-bas}
$\emptyset= F'_\ep\subset...\subset F'_0=F_c=F_c$ is the active filtration of $\F-(E\s F_c)$ (or of the external base $B\cap F_c$ of $M(F_c)$), with external part $F_c$; 
\item 
\label{item:sec-dec-act-internal-bas}
$\emptyset=\emptyset= F_0\s F_c\subset...\subset F_\io\s F_c=E\s F_c$ is the active filtration of $\F-F_c$ (or of the internal base $B\s F_c$ of  $M/F_c$), with external part $\emptyset$. 
\end{enumerate}
\end{observation}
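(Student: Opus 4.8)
The plan is to obtain all three items by substitution into the defining formulas of Definition~\ref{def:act-seq-dec-fund-graph}, using only two stability properties of the active closure: its invariance under dualization (Observation~\ref{obs:dual-closure}, $\AA_\F(X)=\AA_{\F^*}(X)$ with $\Int(\F^*)=\Ext(\F)$ and $\Ext(\F^*)=\Int(\F)$) and its invariance under deletion of the part complementary to $F_c$ (Lemma~\ref{lem:closure-in-internal-part}). For item~(\ref{item:sec-dec-act-dual-bas}), I would first note that in $\F^*$ the internally active elements are $a'_1<\dots<a'_\ep$ and the externally active elements are $a_1<\dots<a_\io$, i.e. the two families swap roles. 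Writing out Definition~\ref{def:act-seq-dec-fund-graph} for $\F^*$ and applying $\AA_{\F^*}=\AA_\F$ at each step, the external part of $\F^*$ is $\AA_\F(\Int(\F))=E\s F_c$, the sets built from its internally active elements $a'_k$ are $E\s\AA_\F(\{a'_{k+1},\dots,a'_\ep\})=E\s F'_k$, and the sets built from its externally active elements $a_k$ are $\AA_\F(\{a_{k+1},\dots,a_\io\})=E\s F_k$; threading these through the generic shape of a filtration yields exactly the reversed--complemented sequence claimed.

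For items~(\ref{item:sec-dec-act-external-bas}) and~(\ref{item:sec-dec-act-internal-bas}) I would begin by identifying the objects: Property~\ref{pty:fund_graph_flat} gives $\F-(E\s F_c)=\F_{M(F_c)}(B\cap F_c)$ and $\F-F_c=\F_{M/F_c}(B\s F_c)$, and Corollary~\ref{cor:dec-cyclic-flat} gives that $B\cap F_c$ is an external base of $M(F_c)$ (so $\Int=\emptyset$, $\Ext=\Ext_M(B)$) while $B\s F_c$ is an internal base of $M/F_c$ (so $\Ext=\emptyset$, $\Int=\Int_M(B)$). In particular $\F-F_c$ has the same internally active elements $a_1<\dots<a_\io$ as $\F$ and no externally active element, while $\F-(E\s F_c)$ has the same externally active elements $a'_1<\dots<a'_\ep$ and no internally active element, so the active filtration of each collapses to a single side.

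Item~(\ref{item:sec-dec-act-internal-bas}) is then a direct application of Lemma~\ref{lem:closure-in-internal-part}: for each $k$ one has $\AA_{\F-F_c}(\{a_{k+1},\dots,a_\io\})=\AA_\F(\{a_{k+1},\dots,a_\io\})=E\s F_k$, so within the ground set $E\s F_c$ the $k$-th set of the active filtration of $\F-F_c$ is $(E\s F_c)\s(E\s F_k)=F_k\s F_c$, and its external part is $\AA_{\F-F_c}(\emptyset)=\emptyset$; this is precisely the stated sequence. Item~(\ref{item:sec-dec-act-external-bas}) is the dual statement, obtained by running the same computation in $\F^*$, i.e. by the dual of Lemma~\ref{lem:closure-in-internal-part}: combining $\AA_{\F^*}(X)=\AA_{\F^*-(E\s F_c)}(X)$ with $\F^*-(E\s F_c)=(\F-(E\s F_c))^*$ and Observation~\ref{obs:dual-closure} yields $\AA_\F(X)=\AA_{\F-(E\s F_c)}(X)$ for every $X\subseteq\Ext(\F)$, whence $F'_k$ and the external part $F_c$ are unchanged upon restriction to $\F-(E\s F_c)$.

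The only real care lies in the bookkeeping: checking that index ranges and the ``$F$ versus $F'$'' roles transform correctly under dualization and restriction, and that each $X$ to which a lemma is applied indeed lies in the relevant active set of the ambient graph. I expect the main (and still minor) obstacle to be item~(\ref{item:sec-dec-act-external-bas}), where Lemma~\ref{lem:closure-in-internal-part} must be correctly dualized from the internally active side to the externally active side via $\F^*-(E\s F_c)=(\F-(E\s F_c))^*$; once that identity is in place everything reduces to substitution into Definition~\ref{def:act-seq-dec-fund-graph}.
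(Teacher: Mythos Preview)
Your proposal is correct and follows essentially the same approach as the paper, which simply notes (just before the observation) that the result ``comes directly from Observation~\ref{obs:dual-closure} (for duality), and from Lemma~\ref{lem:closure-in-internal-part} (for restriction to $\F-(E\setminus F_c)$ or dually to $\F-F_c$).'' You have carefully spelled out the substitutions and the dualization $\F^*-(E\setminus F_c)=(\F-(E\setminus F_c))^*$ that the paper leaves implicit.
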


For the sake of concision, we state the following Theorem \ref{thm:unique-dec-seq-bas} in terms of matroids (it is technically the main result of this section), but it could be equally stated in terms of bipartite graphs/tableaux as a decomposition into particular uniactive bipartite graphs/tableaux (using Property \ref{pty:fund_graph_flat} as previously for the translation). \emevder{We comment the uniqueness part of the theorem below  in Remark \ref{rk:important-uniqueness} below.}

\begin{thm}
\label{thm:unique-dec-seq-bas}
Let $E$ be a linearly ordered set.
Let $B$ be a basis of a matroid $M$ on $E$ with fundamental graph $\F$.
The active filtration of $\F$ 
is the unique (connected) 
filtration 
$\emptyset= F'_\ep\subset...\subset F'_0=F_c=F_0\subset...\subset F_\io= E$
of $E$ (or $M$)
such that:
\begin{itemize}
\item 
for all $1\leq k\leq\io$,
the set $$B_k=B\cap F_k\s F_{k-1}$$ 
is a uniactive internal basis of the minor $$\M_k=\M(F_k)/F_{k-1};$$
\item
for all $1\leq k\leq \ep,$
the set $$B'_k=B\cap F'_{k-1}\s F'_k$$ is a uniactive external basis of the minor $$\M'_k=\M(F'_{k-1})/F'_{k}.$$ 
\end{itemize}
Notice that the active filtration of $\F$ is actually a connected filtration of $M$ (Definition \ref{def:general-filtration}).
Notice also that, for $1\leq k\leq \io$, if $M(F_k)/F_{k-1}$ is an isthmus, then $B_k$ equals this isthmus, and that, for $1\leq k\leq \ep$, if $M(F'_{k-1})/F'_{k}$ is a loop, then $B'_k=\emptyset$.
\end{thm}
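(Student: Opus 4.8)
The statement asserts that the active filtration defined in Definition~\ref{def:act-seq-dec-fund-graph} is the \emph{unique} filtration whose induced minors carry the claimed uniactive internal/external bases. My plan splits into two halves: an \emph{existence} part (the active filtration does satisfy the listed properties, and is connected) and a \emph{uniqueness} part (no other filtration does). For the existence part I would lean almost entirely on Lemma~\ref{lem:pty-act-dec-seq-bas}, which already establishes that the active filtration is a filtration of $E$, that $\min(F_k\setminus F_{k-1})=a_k$ and $\min(F'_{k-1}\setminus F'_k)=a'_k$, and that each $F_k$ (resp. $F'_k$) satisfies the conditions of Property~\ref{pty:fund_graph_flat}, so that $B_k=B\cap(F_k\setminus F_{k-1})$ is indeed a basis of $\M_k=M(F_k)/F_{k-1}$ (and dually for $B'_k$). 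What remains for existence is to verify the \emph{uniactivity} of these bases.

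\textbf{Verifying uniactivity.} First I would fix $k$ with $1\le k\le\io$ and compute the activities of $B_k$ in $\M_k$. By Property~\ref{pty:fund_graph_flat} the fundamental graph of $B_k$ in $\M_k$ is $\F-\bigl((E\setminus F_k)\cup F_{k-1}\bigr)$, equivalently the graph of $\F$ restricted to the part $F_k\setminus F_{k-1}=\AA_{\F-(E\setminus F_k)}(\{a_k\})$ given by Lemma~\ref{lem:pty-act-dec-seq-bas}. The key computation is to apply Proposition~\ref{prop:dec_graph} with $X=\{a_k\}$ inside the graph $\F-(E\setminus F_k)$: by property~(\ref{it4}) we get $\Int(\F-(E\setminus F_k)-(E\setminus(F_k\setminus F_{k-1})))=\{a_k\}$ and by~(\ref{it5}) the external activity on this part is empty. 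Since $a_k=\min(F_k\setminus F_{k-1})$ is the smallest element of the minor, this says precisely that $B_k$ is uniactive internal in $\M_k$. The externally active case is entirely dual: working in $\F^*$ and using Observation~\ref{obs:dual-closure} together with the dual reading of Proposition~\ref{prop:dec_graph}, the part $F'_{k-1}\setminus F'_k=\AA_{\F-F'_k}(\{a'_k\})$ carries exactly one externally active element, namely $a'_k=\min(F'_{k-1}\setminus F'_k)$, and no internally active one, so $B'_k$ is uniactive external in $\M'_k$. Connectedness then follows for free: a uniactive internal (resp. external) basis forces its minor to have nonzero beta invariant, hence to be connected and not a loop (resp. not an isthmus) by Lemma~\ref{lem:dec-seq-beta}; this also disposes of the final two ``Notice'' remarks about the isthmus/loop degenerate cases.

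\textbf{Uniqueness.} This is where I expect the real work, and it is the main obstacle. Suppose $\emptyset=G'_\ep\subset\dots\subset G_c\subset\dots\subset G_\io=E$ is \emph{any} filtration whose minors carry uniactive internal/external bases as in the statement. I would argue by induction from the top, peeling off one part at a time and showing it must coincide with the active-filtration part. The engine is the uniqueness clause of Proposition~\ref{prop:dec_base}: I would show that the hypotheses on the topmost minor $M(E)/G_{\io-1}$ being uniactive internal force $G_{\io-1}=E\setminus\AA(\{a_\io\})$, matching $F_{\io-1}$. Concretely, uniactivity of $B\cap(E\setminus G_{\io-1})$ in $M/G_{\io-1}$ says its largest internally active element structure is a single element which, by the increasing-minima condition built into the definition of a filtration, must be $\min(E\setminus G_{\io-1})$; identifying this element as $a_\io$ and invoking the characterization of $\AA(\{a_\io\})$ via properties~(\ref{it1})--(\ref{it5}) of Proposition~\ref{prop:dec_graph} pins down $G_{\io-1}$ uniquely. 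The delicate point is to handle the interleaving of the internal chain $(F_k)$ and the external chain $(F'_k)$ around the cyclic flat $F_c$: I would first peel all the internal parts down to $F_c$, showing at each stage that the $F_c$ of the filtration must equal the external part $\AA(\Ext(\F))$ of Corollary~\ref{cor:dec-cyclic-flat} (whose defining properties are forced by all the $B_k$ being internal and all the $B'_k$ being external), then invoke Observation~\ref{lem:dec-seq-bas-observation} to reduce the external chain to an independent dual induction on $M(F_c)$. The main obstacle is precisely this bookkeeping — ensuring that the local uniqueness supplied by Proposition~\ref{prop:dec_base} at each peeling step globally reconstructs the entire active filtration, with the cyclic flat $F_c$ correctly serving as the meeting point of the two inductions and with Lemma~\ref{lem:closure-in-internal-part} guaranteeing that restricting to $\F-F_c$ does not disturb the computed closures.
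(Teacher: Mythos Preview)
Your existence argument is essentially the paper's: Lemma~\ref{lem:pty-act-dec-seq-bas} plus Property~\ref{pty:fund_graph_flat} to get that $B_k$ is a basis of $\M_k$, then uniactivity via the active closure (you invoke Proposition~\ref{prop:dec_graph}(iv)--(v), the paper uses the equivalent Lemma~\ref{lem:closure-easy}). Connectedness follows the same way in both.

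Your uniqueness argument, however, has a genuine gap. You propose to peel off the top layer and apply the uniqueness clause of Proposition~\ref{prop:dec_base} with $X=\{a_\io\}$, but that proposition requires \emph{all five} conditions (i)--(v), and you only verify (i), (iv), (v) from the uniactive-internal hypothesis on $M/G_{\io-1}$. Conditions (ii) and (iii) concern the activities of $B\cap G_{\io-1}$ in $M(G_{\io-1})$, and those depend on the \emph{rest} of the filtration below $G_{\io-1}$; a single peeling step does not supply them. Relatedly, you write ``identifying this element as $a_\io$'' without justification: you have shown the top-layer minimum is internally active in $M$ (via Property~\ref{pty:fund_graph_flat}, since $C^*_M(B;b)\cap G_{\io-1}=\emptyset$), but not that it is the \emph{largest} element of $\Int_M(B)$, nor even that your filtration has the same length $\io$ as $|\Int_M(B)|$.

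The paper resolves this not by induction but by a direct global step: it first proves, for an arbitrary filtration $\mathcal S$ satisfying the hypotheses, that $\Int_M(B)=\{a_1,\dots,a_\io\}$ as a set (two inclusions, using that fundamental cocircuits restrict correctly to the minors and that each minor's basis is uniactive). This single equality fixes the length and identifies all the $a_k$ at once. The same argument applied to the sub-filtrations $\emptyset\subset\dots\subset F_k$ and $F_k\subset\dots\subset E$ then yields conditions (ii)--(v) of Proposition~\ref{prop:dec_base} for \emph{every} $F_k$ simultaneously, after which its uniqueness clause forces $F_k=E\setminus\AA(\{a_{k+1},\dots,a_\io\})$. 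So the missing ingredient in your outline is precisely this preliminary global identification of $\Int_M(B)$ (and dually $\Ext_M(B)$); once you have it, the per-layer application of Proposition~\ref{prop:dec_base} goes through with no induction needed.
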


\begin{proof}
%
%
First, let us directly check that the active filtration $(F'_\ep, \ldots, F'_0, F_c , F_0, \ldots, F_\io)$ satisfies the given properties.
The basis $B$ has $\io\geq 0$ internally active elements, which we denote $a_1<...<a_\io$, and $\ep\geq 0$ externally active elements, which we denote $a'_1<...<a'_\ep$.
Let $1\leq k\leq\io$. 
By Lemma \ref{lem:pty-act-dec-seq-bas}, 
we have $a_k=\min(F_k\setminus F_{k-1})$ and 
we have $F_k\setminus F_{k-1}= \AA_{\F-(E\s F_k)}(\{a_k\})$.
Obviously, since $ \AA_{\F-(E\s F_k)}(\{a_k\}) \subseteq E\s F_{k-1}$, we have in fact
$F_k\setminus F_{k-1}=\AA_{\F-(E\s F_k)}(\{a_k\})=\AA_{\F-(F_{k-1}\cup (E\s F_k))}(\{a_k\})$.
By Property \ref{pty:fund_graph_flat}, we have that $B_k=B\cap F_k\s F_{k-1}$ is a basis of $M_k=M(F_k)/F_{k-1}$.
Let us denote $\F_k=\F-(F_{k-1}\cup (E\s F_k))=\F_{M_k}(B_k)$.
%
Since $F_k\setminus F_{k-1}= \AA_{\F_k}(\{a_k\})$, we have that $a_k$ is internally active in $\F_k$.
Moreover, by Lemma \ref{lem:closure-easy}, $F_k\setminus F_{k-1}= \AA_{\F_k}(\{a_k\})$ implies that $\F_k$ is uniactive internal.
Dually, let $1\leq k\leq \ep$.
By Lemma \ref{lem:pty-act-dec-seq-bas} and Property \ref{pty:fund_graph_flat},
we have similarly that $F'_{k-1}\setminus F'_{k}= \AA_{\F-F'_{k}}(a'_k)$,
that $a'_k=\min(F'_{k-1}\setminus F'_{k})$, that $B'_k=B\cap F_{k-1}\s F_{k}$ is a basis of $M'_k=M(F_{k-1})/F_{k}$, and that $\F'_k$ is uniactive external.
So, we have proved that the active filtration satisfies the given properties.

Now, 
notice that each involved minor $M_k$, $1\leq k\leq\io$, or $M'_k$, $1\leq k\leq\io$, has a uniactive internal or a uniactive external basis, which implies that this minor is an isthmus (in this case the basis equals this isthmus), or a loop (in this case, the basis is the empty set), or a connected matroid (since $\beta(M)\not=0$ and $\mid E\mid>1$).
This proves that the active filtration of $\F$ is a connected filtration of $M$ (Definition \ref{def:general-filtration}).

\def\S{{\mathcal S}}

It remains to prove the uniqueness property.
Assume that a filtration $\S=(F'_\ep, \ldots, F'_0,$ $F_c ,$  $F_0, \ldots, F_\io)$ satisfies the properties stated in the proposition.
Let us denote $a_k=\min(F_k\setminus F_{k-1})$, $1\leq k\leq \io$, and $a'_k=\min(F'_{k-1}\setminus F'_{k})$, $1\leq k\leq \ep$.



First, recall that in any matroid $M$, for every set $F$, the union of a basis of $M/F$ and a basis of $M(F)$ is basis of $M$.
Hence, since $B_k$ is a basis of $M_k$, $1\leq k\leq \io$, and
$B'_k$ is a basis of $M'_k$, $1\leq k\leq \ep$, we have that, for any $0\leq k\leq \io$, the set $B\cap F_{k}$, resp. $B\s F_k$,  is a basis of $M(F_{k})$, resp. $M/F_k$, as it is obtained by union of some of these former bases.

%


Second, let us prove that $\Int(\F)=\{a_1,\dots, a_\io\}$. 

Let $b\in \Int(\F)$. By definition, $b\in B$ and $b=\min(C^*(B;b))$.
By assumption on the sequence $\S$, 
$b$ is an element of a minor $N$ of $M$ induced by this sequence $\S$: either $N=M_k$ for some $1\leq k\leq \io$ or $N=M'_k$ for some $1\leq k\leq \ep$.
In any case, $b$ is an element of the basis $B_N$ induced by $B$ in $N$: either $B_N=B_k$ if $N=M_k$, or $B_N=B'_k$ if $N=M'_k$.
Moreover, since $N$ is of type $M(G)/F$ and its basis $B_N$ of type $B\cap (G\s F)$, we have by Property \ref{pty:fund_graph_flat} that
$C^*_N(B_N;b)$ is obtained from $C^*_M(B;b)$ by removing elements not in the ground set of $N$.
So $b=\min(C^*_N(B_N;b))$, so $b$ is internally active in $N$. By assumption on the  sequence $\S$ this implies that $b=a_k$ for some $1\leq k\leq \io$.
Hence $\Int(\F)\subseteq\{a_1,\dots, a_\io\}$.

Conversely, let $1\leq k\leq \io$.
By assumption on the sequence $\S$, we have $a_k=\min(C^*_{M_k}(B_k;a_k))$.
As above, by Property \ref{pty:fund_graph_flat}, we have $C^*_{M_k}(B_k;a_k)=C^*_M(B;a_k)\cap (F_k\s F_{k-1})$. 
Let $e=\min(C^*_M(B;a_k))$ and assume that $e<a_k$.
Since $\S$ is a filtration, by Definition \ref{def:general-filtration}, the sequence $a_j=\min(F_j\s F_{j-1})$ is increasing with $j$. Hence 
$a_k=\min(E\s F_{k-1})$. Hence $e\in F_{k-1}$.
On the other hand, by properties of matroid contraction, since $B\s F_{k-1}$ is a basis of $M/F_{k-1}$, we have $C^*_{M}(B;b)\cap F_{k-1}=\emptyset$, which is a contradiction with $e\in F_{k-1}$.
So we have $e=a_k$. So $a_k\in \Int(\F)$ and we have proved $\Int(\F)\supseteq\{a_1,\dots, a_\io\}$.
Finally, we have proved $\Int(\F)=\{a_1,\dots, a_\io\}$. 
%

Third, let us prove that for every $k$, $0\leq k\leq \io$, we have
 $\Int_{M(F_k)}(B\cap F_k)=\{a_1,\dots, a_k\}$, resp. 
  $\Int_{M/F_{k}}(B\s F_{k})=\{a_{k+1},\dots, a_\io\}$.
  
  We obtain this result by directly applying the above result (that is $\Int_M(B)=\{a_1,\dots, a_\io\}$) in the minor $M(F_k)$, resp. $M/F_k$, of $M$.
Precisely, let $0\leq k\leq \io$. 
As noticed above,
the set $B\cap F_{k}$, resp. $B\s F_k$, is a basis of $M(F_{k})$, resp. $M/F_k$.
Obviously, by Definition \ref{def:general-filtration}, we have that 
$\emptyset=F'_\ep\subset \ldots\subset F'_0= F_c = F_0\subset \ldots \subset F_{k}$,
resp.  
$\emptyset=F_c\subset F_{k+1}\s F_k \ldots\subset F_\io\s F_k$,
is a filtration of $F_{k}$, resp. $E\s F_k$, and that it satisfies the properties given in the proposition statement (as the induced minors are minors also induced by $\S$, that is by $\emptyset=F'_\ep\subset \ldots\subset F'_0= F_c = F_0\subset \ldots \subset F_{\io}$).
The set of smallest elements of successive differences of sets in the sequence is 
$\{a_1,\dots, a_k\}$, resp. 
  $\{a_{k+1},\dots, a_\io\}$.
So we can apply the same reasoning as above to the minor $M(F_k)$, resp. $M/F_k$, of $M$, and we obtain the same result.

Fourth, let us prove that, for every $k$, $0\leq k\leq \io$, we have
 $\Ext_{M(F_k)}(B\cap F_k)=\Ext_M(B)$, resp. 
  $\Ext_{M/F_{k}}(B\s F_{k})=\emptyset$.
  
Applying the above result (that is $\Int_M(B)=\{a_1,\dots, a_\io\}$) in the dual $M^*$ of $M$, we directly have $\Ext_M(B)=\{a'_1,\dots,a'_\ep\}$.
Now, as above, let us apply this last result (that is $\Ext_M(B)=\{a'_1,\dots,a'_\ep\}$) in the minor  $M(F_k)$, to the filtration
$\emptyset=F'_\ep\subset \ldots\subset F'_0= F_c = F_0\subset \ldots \subset F_{k}$ of $F_k$.
We obtain  $\Ext_{M(F_k)}(B\cap F_k)=\{a'_1,\dots,a'_\ep\}=\Ext_M(B)$.
And let us apply the same result in the minor $M/F_k$
$\emptyset=F_c\subset F_{k+1}\s F_k \ldots\subset F_\io\s F_k$,
to the filtration of $E\s F_k$. We obtain  $\Ext_{M/F_{k}}(B\s F_{k})=\emptyset$.

Finally, we have proved that, 
for every $k$, $0\leq k\leq \io$, 
 and denoting $X=\{a_{k+1},\dots,a_\io\}$,
the following conditions are satisfied:
\vspace{-2mm}
\begin{enumerate}[(i)]
\partopsep=0mm \topsep=0mm \parsep=0mm \itemsep=0mm
\item $B\cap F_k$ is a basis of $M(F_k)$
\item  $\Int_{M(F_k)}(B\cap F_k)=\Int_M(B)\s X$,\par 
\item  $\Ext_{M(F_k)}(B\cap F_k)=\Ext_M(B)$,\par
\item  $\Int_{M/F_k}(B\s F_k)=X$,\par
\item  $\Ext_{M/F_k}(B\s F_k)=\emptyset$.
\end{enumerate}
\vspace{-2mm}
By uniqueness in Proposition \ref{prop:dec_base}, this implies $F_k=E\s \AA(X)=E\s \AA(\{a_{k+1},\dots,a_\io\})$, which matches Definition \ref{def:act-seq-dec-fund-graph} of the active filtration.

At last, by duality, we also have,
for every $k$, $0\leq k\leq \ep$, denoting $X=\{a'_{k+1},\dots,a'_\ep\}$,
 that $F'_k=\AA(\{a_{k+1},\dots,a_\io\})$, which matches Definition \ref{def:act-seq-dec-fund-graph} of the active filtration (notice that, in particular, $F_0=F'_0$).
So finally the filtration $(F'_\ep, \ldots, F'_0, F_c , F_0, \ldots, F_\io)$ is the active filtration of $\F$.
\end{proof}


\begin{observation}
\label{lem:dec-seq-bas-observation-suite}
Let us continue and refine Observation \ref{lem:dec-seq-bas-observation}.
Let $\emptyset= F'_\ep\subset...\subset F'_0=F_c=F_0\subset...\subset F_\io= E$ be the active filtration of the basis $B$ of $M$.
And let $F$ and $G$ be two subsets in this sequence such that $F\subseteq G$. Then, by Theorem \ref{thm:unique-dec-seq-bas},
the active filtration of the basis $B\cap G\s F$ of $M(G)/F$ is obtained from the subsequence with extremities $F$ and $G$ (i.e. $F\subset \dots \subset G$) of the active filtration of $B$ by  subtracting $F$ from each subset of the subsequence (with $F_c\s F$ as cyclic flat).
In particular, the subsequence ending with $F$ (i.e. $\emptyset\subset \dots \subset F$) yields the active filtration of $B\cap F$ in $M(F)$, and the subsequence beginning with $F$ (i.e. $F\subset \dots \subset E$) yields the active filtration of $B\s F$ in $M/F$ by subtracting $F$ from each subset.
\emevder{peut-etre mieux de faire un belle proposition ou un corollaire de cette observation ?}
\end{observation}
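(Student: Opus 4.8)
The plan is to reduce everything to the uniqueness assertion in Theorem~\ref{thm:unique-dec-seq-bas}. Write the active filtration of $B$ as $\emptyset= F'_\ep\subset\dots\subset F'_0=F_c=F_0\subset\dots\subset F_\io= E$, let $F$ and $G$ be two of its members with $F\subseteq G$, and let $\mathcal S$ be the sequence whose members are exactly those sets of this filtration lying between $F$ and $G$ inclusively, each with $F$ removed. Since $F$ and $G$ both satisfy the equivalent conditions of Property~\ref{pty:fund_graph_flat} (Lemma~\ref{lem:pty-act-dec-seq-bas}), the last part of that property gives that $B\cap(G\s F)$ is a basis of $M(G)/F$; and $\mathcal S$ starts at $F\s F=\emptyset$ and ends at $G\s F$, the ground set of $M(G)/F$. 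I would first check that $\mathcal S$ is a filtration of $G\s F$ in the sense of Definition~\ref{def:general-filtration}: for two consecutive members $F_{k-1}\subseteq F_k$ of the subsequence one has $F\subseteq F_{k-1}$, hence $(F_k\s F)\s(F_{k-1}\s F)=F_k\s F_{k-1}$, so the successive set differences of $\mathcal S$ coincide with those of the active filtration of $B$. Consequently the two monotonicity conditions on the minima required by Definition~\ref{def:general-filtration} are inherited verbatim, while the flat/dual-flat conditions follow from Lemma~\ref{lem:pty-act-dec-seq-bas}.

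The key step is to identify the minors induced by $\mathcal S$ with those induced by the active filtration of $B$. For $F\subseteq F_{k-1}\subseteq F_k\subseteq G$, the usual restriction--contraction commutation rules give
\[
\bigl(M(G)/F\bigr)(F_k\s F)\big/(F_{k-1}\s F)=M(F_k)/F_{k-1},
\]
since restricting $M(G)/F$ to $F_k\s F$ yields $M(F_k)/F$ (using $F_k\subseteq G$), and contracting $F_{k-1}\s F$ then adjoins $F_{k-1}\s F$ to the contracted part, giving $M(F_k)/F_{k-1}$ (using $F\subseteq F_{k-1}$). Thus each minor induced by $\mathcal S$ is exactly the minor $M_k=M(F_k)/F_{k-1}$ (or its analogue $M'_k$ on the external side) induced by the active filtration of $B$. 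Moreover the basis that $B\cap(G\s F)$ induces in this minor is $\bigl(B\cap(G\s F)\bigr)\cap(F_k\s F_{k-1})=B\cap(F_k\s F_{k-1})=B_k$, so it too is unchanged.

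Since being a uniactive internal (resp. external) basis of $M_k$ (resp. $M'_k$) is an intrinsic property of the pair formed by the minor and the induced basis, and both are preserved, each piece of $\mathcal S$ is uniactive internal or uniactive external exactly as in the active filtration of $B$. Hence $\mathcal S$ is a filtration of $M(G)/F$ satisfying the defining properties of Theorem~\ref{thm:unique-dec-seq-bas}, and by the uniqueness part of that theorem it is the active filtration of $B\cap(G\s F)$ in $M(G)/F$. Its cyclic flat is the member separating the external pieces from the internal ones, namely $(F_c\cap G)\s F$, which equals $F_c\s F$ when $F\subseteq F_c\subseteq G$ (and degenerates to $\emptyset$ when $F_c\subseteq F$, resp. to $G\s F$ when $G\subseteq F_c$). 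The two ``in particular'' statements are the special cases with lower end $\emptyset$ and upper end $F$ (restriction to $M(F)$) and with lower end $F$ and upper end $E$ (contraction to $M/F$). The only points needing genuine care are the minor commutation identity and the correct location of the new cyclic flat relative to $F_c$; everything else is bookkeeping inherited from the active filtration of $B$.
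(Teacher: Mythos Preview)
Your proof is correct and follows exactly the approach the paper indicates: the observation in the paper carries no separate proof beyond the phrase ``by Theorem~\ref{thm:unique-dec-seq-bas}'', and you have simply supplied the bookkeeping details (the minor commutation identity, the preservation of the induced bases $B_k$, and the appeal to uniqueness) that this citation entails. Your treatment of the cyclic flat as $(F_c\cap G)\s F$ is in fact slightly more careful than the paper's bare ``$F_c\s F$'', correctly handling the degenerate cases $G\subseteq F_c$ and $F_c\subseteq F$; the remark that ``the flat/dual-flat conditions follow from Lemma~\ref{lem:pty-act-dec-seq-bas}'' is harmless but unnecessary, since Definition~\ref{def:general-filtration} of a filtration does not require them.
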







%


%

\def\bbullet{\blue{\blacksquare}}

\def\rbullet{\tikz\draw[red,fill=red] (0,0) circle (.8ex);}
\def\gbullet{{\otimes}}

\def\fullsquare{\tikz\draw[blue,fill=blue, opacity=0.5] (0,0) rectangle (0.22,0.22);}
\def\littlefullsquare{\tikz\draw[blue,fill=blue, opacity=0.5] (0,0) rectangle (0.13,0.13);}

\def\bbullet{\blue{\fullsquare}}
\def\taille{1}

\def\bgbullet{\blue{\littlefullsquare}}

\def\rgbullet{\red{\otimes}}



%
%



\begin{figure}[!h]

   \begin{minipage}[c]{.46\linewidth}
   \centering
   	\scalebox{1.3}{
      \includegraphics{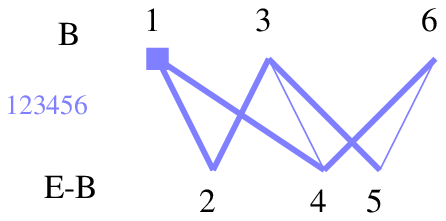}
      }
   \end{minipage}
   \hfill
      \begin{minipage}[c]{.46\linewidth}
      \centering
      \renewcommand{\arraystretch}{\taille}
		\begin{tabular}{|c|c|c|c|c|c|c|}
		\hline
		 & \coltab{$C^*_1$} & \ptt{2} & \coltab{$C^*_3$}&\ptt{4}&\ptt{5}&\coltab{$C^*_6$} \\
		\hline
		\ptt{1}& $\bbullet$ && &&&\\
		\coltab{$C_2$}& $\bbullet$ &$\bbullet$ & $\bbullet$ &&&\\
		\ptt{3}& && $\bbullet$ &&&\\
		\coltab{$C_4$}& $\bbullet$ &&$\bgbullet$ &$\bbullet$ &&$\bbullet$ \\
		\coltab{$C_5$}& &&$\bbullet$&&$\bbullet$& $\bgbullet$ \\
		\ptt{6}&&& &&&$\bbullet$  \\
		\hline
		\end{tabular}												
   \end{minipage}
\caption{Fundamental graph/tableau of basis $136$ with activities $(1,0)$ and active partition $E=123456$.}
\label{fig:exbasedecomp-136}
\end{figure}

\begin{figure}[!h]
   \begin{minipage}[c]{.46\linewidth}
   \centering
   	\scalebox{1.3}{
      \includegraphics{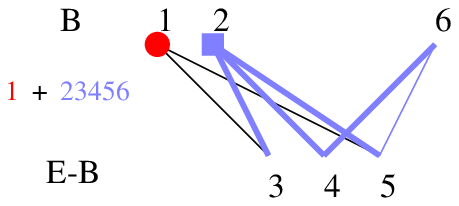}
      }
   \end{minipage}
   \hfill
      \begin{minipage}[c]{.46\linewidth}
      \centering
      \renewcommand{\arraystretch}{\taille}
		\begin{tabular}{|c|c|c|c|c|c|c|}
		\hline
		 & \coltab{$C^*_1$} & \coltab{$C^*_2$} & \ptt{3}&\ptt{4}&\ptt{5}&\coltab{$C^*_6$} \\
		\hline
		\ptt{1}& $\rbullet$ && &&&\\
		\ptt{2}& & $\bbullet$& &&&\\
		\coltab{$C_3$}& $\gbullet$ &$\bbullet$ & $\bbullet$ &&&\\
		\coltab{$C_4$}&  &$\bbullet$&&$\bbullet$&&$\bbullet$ \\
		\coltab{$C_5$}&$\gbullet$&$\bbullet$& &&$\bbullet$ &$\bgbullet$ \\
		\ptt{6}& &&&&& $\bbullet$ \\
		\hline
		\end{tabular}												
   \end{minipage}
\caption{Fundamental graph/tableau of basis $126$ with activities $(2,0)$ and active partition $E=1+23456$.}
\label{fig:exbasedecomp-126}
\end{figure}

%

%

\begin{figure}[!h]
   \begin{minipage}[c]{.46\linewidth}
   \centering
   	\scalebox{1.3}{
      \includegraphics{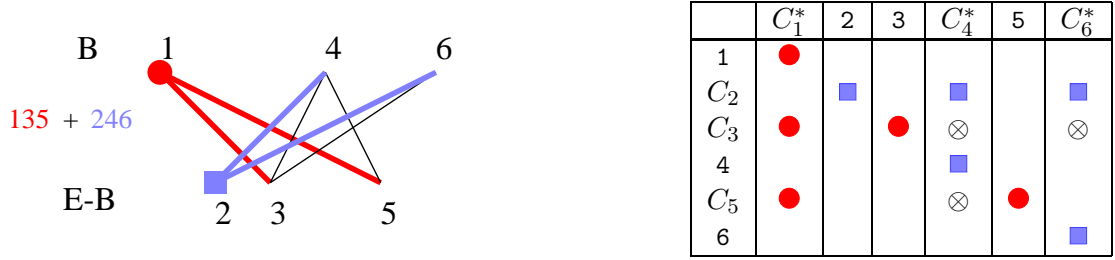}
      }
   \end{minipage}
   \hfill
      \begin{minipage}[c]{.46\linewidth}
      \centering
      \renewcommand{\arraystretch}{\taille}
		\begin{tabular}{|c|c|c|c|c|c|c|}
		\hline
		 & \coltab{$C^*_1$} & \ptt{2} &\ptt{3} &\coltab{$C^*_4$}&\ptt{5}&\coltab{$C^*_6$} \\
		\hline
		\ptt{1}& $\rbullet$ && &&&\\
		\coltab{$C_2$}& &$\bbullet$ & &$\bbullet$&&$\bbullet$\\
		\coltab{$C_3$}&$\rbullet$ && $\rbullet$  &$\gbullet$ &&$\gbullet$ \\
		\ptt{4}&  && &$\bbullet$ && \\
		\coltab{$C_5$}&$\rbullet$ &&  &$\gbullet$&$\rbullet$ & \\
		\ptt{6}& &&&&& $\bbullet$ \\
		\hline
		\end{tabular}											
   \end{minipage}
\caption{Fundamental graph/tableau of basis $146$ with activities $(1,1)$ and active partition $E=135+246$.}
\label{fig:exbasedecomp-146}
\end{figure}

%

\begin{figure}[!h]
   \begin{minipage}[c]{.46\linewidth}
   \centering
   	\scalebox{1.3}{
      \includegraphics{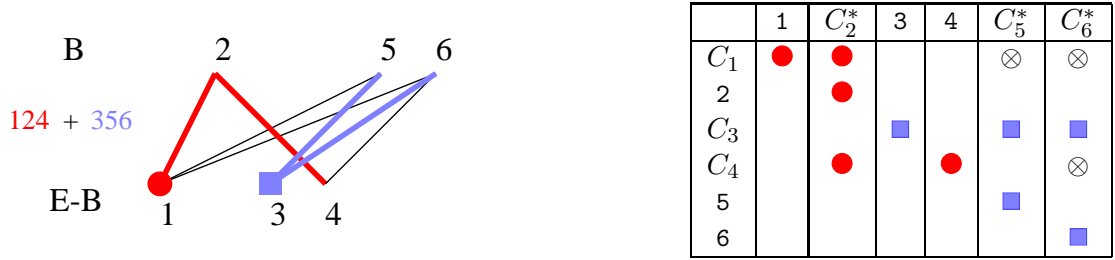}
      }
   \end{minipage}
   \hfill
      \begin{minipage}[c]{.46\linewidth}
      \centering
      \renewcommand{\arraystretch}{\taille}
		\begin{tabular}{|c|c|c|c|c|c|c|}
		\hline
		 & \ptt{1} & \coltab{$C^*_2$} &\ptt{3} &\ptt{4}&\coltab{$C^*_5$}&\coltab{$C^*_6$} \\
		\hline
		\coltab{$C_1$}& $\rbullet$ &$\rbullet$& &&$\gbullet$&$\gbullet$\\
		\ptt{2}& &$\rbullet$ & &&&\\
		\coltab{$C_3$}& && $\bbullet$  &&$\bbullet$ &$\bbullet$ \\
		\coltab{$C_4$}&  &$\rbullet$ & &$\rbullet$ &&$\gbullet$  \\
		\ptt{5}& &&  &&$\bbullet$ & \\
		\ptt{6}& &&&&& $\bbullet$ \\
		\hline
		\end{tabular}									
   \end{minipage}
\caption{Fundamental graph/tableau of basis $256$ with activities $(0,2)$ and active partition $E=124+356$.}
\label{fig:exbasedecomp-256}
\end{figure}

%

\begin{example}
\label{ex2}
{\rm
Figures \ref{fig:exbasedecomp-136}, \ref{fig:exbasedecomp-126}, \ref{fig:exbasedecomp-146}, \ref{fig:exbasedecomp-256} show active decompositions/partitions of some fundamental graphs/tableaux. They illustrate also bases of $K_4$ from Example \ref{ex1} and Figure \ref{fig:K4exbase256}.
In the graphs: the full circles and full squares show the internal/external active elements, and the bold paths of edges connected to these elements show the active partition (restricting the fundamental graph to the subsets of edges forming these parts yield uniactive fundamental graphs); and the light edges are not involved in the construction.
In the tableaux: the full circles and full squares show the active partition (restricting the fundamental tableau to the subsets of entries forming these parts yield uniactive fundamental tableaux);
and the circled crosses and the little squares are not involved in the construction (circled crosses disappear when restricting to tableaux induced by the active partition).
The fundamental circuits and cocircuits are also indicated at the beginning of concerned rows and columns of the tableaux.
}
\end{example}

\begin{thm} 
\label{th:dec_base}
Let $M$ be a matroid on a linearly ordered set $E$.
%
$$
\Bigl\{\ \text{bases of }M\ \Bigr\}\ 
=\biguplus_
{\substack{
\emptyset=F'_\ep\subset...\subset F'_0=F_c\\
F_c=F_0\subset...\subset F_\iota=E\\
\hbox{\small connected filtration of $M$}
}}
\Bigl\{B'_1\plus...\plus B'_\ep\plus B_1\plus...\plus B_\iota
\mid$$
$$\text{for all }1\leq k\leq \ep, \
B'_k \hbox{ base of }M(F'_{k-1})/F'_{k}\text{ with $\io(B'_k)=0$ and $\ep(B'_k)=1$,}$$
$$\text{for all }1\leq k\leq \io, \
B_k \hbox{ base of }M(F_k)/F_{k-1}\text{ with $\io(B_k)=1$ and $\ep(B_k)=0$}\Bigr\}$$
With the above notations and $B=B'_1\plus...\plus B'_\ep\plus B_1\plus...\plus B_\iota$,
we then have:
$$\Int(B)=\cup_{1\leq k\leq \iota} \min(F_k\s F_{k-1})=\cup_{1\leq k\leq \iota} \Int(B_k),$$
$$\Ext(B)=\cup_{1\leq k\leq \ep} \min(F'_{k-1}\s F'_{k})=\cup_{1\leq k\leq \ep} \Ext(B'_k).$$
Moreover, the connected filtration associated to the basis $B$ in the right-hand side of the equality  is the active filtration of (the fundamental graph of) $B$.
\end{thm}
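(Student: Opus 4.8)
The plan is to derive Theorem \ref{th:dec_base} as a global repackaging of the local uniqueness result Theorem \ref{thm:unique-dec-seq-bas}. Writing $S$ for the right-hand side union, there are two inclusions plus the disjointness of the union to establish. For the inclusion ``$\subseteq$'', I would take an arbitrary basis $B$ and feed it to Theorem \ref{thm:unique-dec-seq-bas}: this yields its active filtration $\emptyset = F'_\ep \subset \dots \subset F_\io = E$, which is a connected filtration, together with pieces $B_k = B\cap(F_k\s F_{k-1})$ and $B'_k = B\cap(F'_{k-1}\s F'_k)$ that are uniactive internal, resp. uniactive external, bases of the prescribed minors. Since the parts $F_k\s F_{k-1}$ and $F'_{k-1}\s F'_k$ partition $E$, one has $B = B'_1\plus\dots\plus B'_\ep\plus B_1\plus\dots\plus B_\io$, so $B$ occurs in the term of $S$ indexed by its own active filtration.

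For the reverse inclusion ``$\supseteq$'' I would show that any family assembled according to the index conditions is indeed a basis of $M$, by iterated gluing. Recall the standard fact, already invoked in the proof of Theorem \ref{thm:unique-dec-seq-bas}, that for any $F\subseteq E$ the union of a basis of $M(F)$ and a basis of $M/F$ is a basis of $M$. Applying it repeatedly along the chain $\emptyset = F'_\ep\subset\dots\subset F'_0 = F_c$ inside the successive restrictions $M(F'_{k-1})$ glues $B'_1\plus\dots\plus B'_\ep$ into a basis of $M(F_c)$; applying it repeatedly along $F_c = F_0\subset\dots\subset F_\io = E$ inside the successive minors (using standard minor identities, cf. Observation \ref{lem:dec-seq-observation}) glues $B_1\plus\dots\plus B_\io$ into a basis of $M/F_c$. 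A final application to $F_c$ then shows that $B = B'_1\plus\dots\plus B_\io$ is a basis of $M$.

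The disjointness of the union, and with it the ``moreover'' statement, is exactly where the uniqueness half of Theorem \ref{thm:unique-dec-seq-bas} is decisive. Given $B$ assembled from some connected filtration $\mathcal S$ with uniactive pieces, the parts being disjoint forces each chosen piece to equal $B$ intersected with the corresponding part; hence $\mathcal S$ together with these pieces satisfies verbatim the hypotheses of Theorem \ref{thm:unique-dec-seq-bas} for the basis $B$. By the uniqueness asserted there, $\mathcal S$ must be the active filtration of $B$, and the pieces are then determined. Thus $B$ lies in one and only one term of $S$, the two assignments above are mutually inverse, and the filtration attached to $B$ in $S$ is its active filtration.

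Finally, the activity formulas are read off from facts obtained inside the proof of Theorem \ref{thm:unique-dec-seq-bas}: there one shows $\Int(B) = \{a_1,\dots,a_\io\}$ with $a_k = \min(F_k\s F_{k-1})$, and since each $B_k$ is uniactive internal in $M(F_k)/F_{k-1}$ its single internally active element is the smallest element of that minor's ground set, namely $\min(F_k\s F_{k-1})$, so $\Int(B_k) = \{\min(F_k\s F_{k-1})\}$; taking the union over $k$ yields the three equal sets, and the identity for $\Ext(B)$ follows dually. I expect the only genuinely delicate point to be the bookkeeping in ``$\supseteq$'': checking that the minor identities let the gluing lemma be applied consistently along both chains, and that restricting the glued basis to each part returns the chosen piece, so that the uniqueness of Theorem \ref{thm:unique-dec-seq-bas} can legitimately be invoked to conclude disjointness.
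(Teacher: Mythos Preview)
Your proposal is correct and follows essentially the same approach as the paper: both directions are obtained by invoking Theorem \ref{thm:unique-dec-seq-bas}, with the forward inclusion given by the existence part, the reverse inclusion by the iterated gluing of bases of minors (as in the proof of Theorem \ref{thm:unique-dec-seq-bas}), and the disjointness together with the ``moreover'' clause by the uniqueness part. Your derivation of the activity formulas is likewise the paper's, though the paper cites Lemma \ref{lem:pty-act-dec-seq-bas} directly for $\min(F_k\setminus F_{k-1})=a_k$ rather than pointing into the proof of Theorem \ref{thm:unique-dec-seq-bas}.
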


\begin{proof}
This theorem simply consists in applying Theorem \ref{thm:unique-dec-seq-bas} to all bases at the same time.
Let $B$ be a basis of $M$ with fundamental graph $\F$.
By Theorem \ref{thm:unique-dec-seq-bas}, the active filtration of $\F$ induces exactly the partition $B=B'_1\plus...\plus B'_\ep\plus B_1\plus...\plus B_\iota$ as stated in the present theorem. By properties of these bases, we have 
$\cup_{1\leq k\leq \iota} \min(F_k\s F_{k-1})=\cup_{1\leq k\leq \iota} \Int(B_k)$
and $\cup_{1\leq k\leq \ep} \min(F'_{k-1}\s F'_{k})=\cup_{1\leq k\leq \ep} \Ext(B'_k)$.
And, by definition of the active filtration and Lemma \ref{lem:pty-act-dec-seq-bas}, we have $\Int(B)=\cup_{1\leq k\leq \iota} \min(F_k\s F_{k-1})$ and $\Ext(B)=\cup_{1\leq k\leq \ep} \min(F'_{k-1}\s F'_{k})$.

Conversely, let $B'_1,..., B'_\ep, B_1,..., B_\iota$ as stated in the present theorem for a given connected filtration of $M$. 
Obviously, and as shown in the proof of Theorem \ref{thm:unique-dec-seq-bas},
we have that $B=B'_1\plus...\plus B'_\ep\plus B_1\plus...\plus B_\iota$ 
is a basis of $M$.
Furthermore, by uniqueness property in Theorem \ref{thm:unique-dec-seq-bas}, we have that the filtration is the active filtration of the fundamental graph of $B$, which implies as above that $\Int(B)=\cup_{1\leq k\leq \iota} \min(F_k\s F_{k-1})=\cup_{1\leq k\leq \iota} \Int(B_k),$
and $\Ext(B)=\cup_{1\leq k\leq \ep} \min(F'_{k-1}\s F'_{k})=\cup_{1\leq k\leq \ep} \Ext(B'_k).$
\end{proof}

\begin{remark}
\label{rk:important-uniqueness}
\rm
\emevder{environnement remark pour cette rk ?}%
One sees how the uniqueness result in Theorem \ref{thm:unique-dec-seq-bas} is important.
The easier result, without uniqueness, contained in this theorem just states that the bases induced in the active minors induced by the active filtration are uniactive internal/external. 
From this weaker result, one could derive
a weaker version of  Theorem \ref{th:dec_base} above with a union instead of a disjoint union, and then a weaker version of the Tutte polynomial formula in Theorem \ref{th:tutte} with an inequality  instead of an equality.
It is the uniqueness that allows to state Theorems \ref{th:dec_base} and \ref{th:tutte} as they are.
\end{remark}

\begin{cor}[\cite{EtLV98}] 
\label{th:EtLV98}
Let $M$ be a matroid on a linearly ordered set $E$.
\begin{eqnarray*}
\{\text{bases of $M$}\}=\biguplus_{\substack{F_c \hbox{ cyclic } \\ \hbox{ flat of } M}}
\{B' \plus B \mid & B' \hbox{ base of } M(F_c)\hbox{ with internal activity $0$, }\\
& B \hbox{ base of } M/F_c\text{ with external activity $0$}\}
\end{eqnarray*}
\end{cor}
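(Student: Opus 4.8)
The plan is to obtain this statement as a \emph{coarsening} of the main decomposition Theorem~\ref{th:dec_base}, keeping from each connected filtration only its middle term $F_c=F'_0=F_0$ and discarding the finer data $F'_\ep,\dots,F'_1$ and $F_1,\dots,F_\io$. By Lemma~\ref{lem:des-seq-flats} this $F_c$ is a cyclic flat, so grouping the index set of Theorem~\ref{th:dec_base} (the set of connected filtrations of $M$) according to $F_c$ produces a disjoint union indexed by cyclic flats. Since Theorem~\ref{th:dec_base} already delivers both completeness (every basis occurs) and disjointness (every basis occurs once), the reorganized union is automatically a partition of the bases of $M$; the work is only to identify each grouped term with the set $\{B'\uplus B\}$ in the statement.

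The key structural input is Observation~\ref{lem:dec-seq-observation}: a connected filtration of $M$ with cyclic flat $F_c$ is exactly the concatenation of a connected filtration $\emptyset=F'_\ep\subset\dots\subset F'_0=F_c$ of $M(F_c)$ (whose cyclic flat is the whole ground set $F_c$) and, after subtracting $F_c$, a connected filtration $\emptyset=F_0\s F_c\subset\dots\subset F_\io\s F_c=E\s F_c$ of $M/F_c$ (whose cyclic flat is $\emptyset$). One must check that the induced minors agree, i.e.\ that $(M/F_c)(F_k\s F_c)/(F_{k-1}\s F_c)=M(F_k)/F_{k-1}$ for the internal pieces and dually for the external pieces; this is a short computation using $F_c\subseteq F_{k-1}$. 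Consequently the decomposition $B=B'_1\uplus\dots\uplus B'_\ep\uplus B_1\uplus\dots\uplus B_\io$ of Theorem~\ref{th:dec_base} splits along $F_c$ as $B=(B\cap F_c)\uplus(B\s F_c)$, with $B\cap F_c=B'_1\uplus\dots\uplus B'_\ep$ a basis of $M(F_c)$ and $B\s F_c=B_1\uplus\dots\uplus B_\io$ a basis of $M/F_c$.

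Next I would identify the two inner unions by applying Theorem~\ref{th:dec_base} to the minors $M(F_c)$ and $M/F_c$ themselves. A connected filtration of $M(F_c)$ whose cyclic flat is the \emph{whole} ground set $F_c$ has $\io=0$, hence gives a purely external decomposition; and, by Definition~\ref{def:dec-cyclic-flat}, a basis $B'$ of $M(F_c)$ has $\AA(\Int(\F))=\emptyset$ — equivalently external part equal to all of $F_c$, equivalently $\Int(B')=\emptyset$ — precisely in this case. Thus Theorem~\ref{th:dec_base} for $M(F_c)$ shows that the union over such filtrations is exactly the set of bases $B'$ of $M(F_c)$ with internal activity $0$, each counted once; dually, the union over connected filtrations of $M/F_c$ with cyclic flat $\emptyset$ is exactly the set of bases $B$ of $M/F_c$ with external activity $0$, each counted once. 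Recombining via the standard fact (used already in the proof of Theorem~\ref{thm:unique-dec-seq-bas}) that the union of a basis of $M(F_c)$ and a basis of $M/F_c$ is a basis of $M$, one reads off the claimed equality.

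The main point requiring care — the only step that is not pure bookkeeping — is the \emph{bijectivity of this recombination}: one must be sure that pasting an internal-activity-$0$ basis $B'$ of $M(F_c)$ to an external-activity-$0$ basis $B$ of $M/F_c$ yields a basis of $M$ whose external part is again exactly $F_c$, and not a strictly larger cyclic flat. This is secured by the uniqueness in Theorem~\ref{thm:unique-dec-seq-bas} (equivalently, the uniqueness of the characterizing subset in Corollary~\ref{cor:dec-cyclic-flat}): the active filtration of $B'\uplus B$ is forced to have cyclic flat $F_c$ because $F_c$ satisfies the five defining properties of Corollary~\ref{cor:dec-cyclic-flat} for the basis $B'\uplus B$ — items (i), (ii) and (v) are immediate from the hypotheses, while (iii) and (iv) follow because $F_c$ is a flat and a dual-flat, so by Property~\ref{pty:fund_graph_flat} the fundamental cocircuits of elements of $B\s F_c$ avoid $F_c$ and the fundamental circuits of elements of $F_c\s B$ stay inside $F_c$. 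Uniqueness then pins down $F_c$, so the recombination inverts the splitting and the disjoint union of the statement follows.
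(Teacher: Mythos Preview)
Your proof is correct and follows essentially the same approach as the paper: coarsen Theorem~\ref{th:dec_base} by grouping connected filtrations according to their middle term $F_c$, then identify each grouped piece via Theorem~\ref{th:dec_base} applied to $M(F_c)$ and to $M/F_c$ (the paper's proof is the one-sentence ``Direct by Observation~\ref{lem:dec-seq-bas-observation} and Theorem~\ref{th:dec_base} applied to decompose the set of bases of $M$, the set of external bases of $M(F_c)$, and the set of internal bases of $M/F_c$''). Your final paragraph on the bijectivity of the recombination is not wrong but is unnecessary: once you have composed the three bijections coming from Theorem~\ref{th:dec_base} (for $M$, for $M(F_c)$, for $M/F_c$) together with the splitting of connected filtrations at $F_c$, the resulting map is already a bijection; that $B'\uplus B$ has external part exactly $F_c$ is then a consequence, not a hypothesis to be checked.
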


\begin{proof}
Direct by Observation \ref{lem:dec-seq-bas-observation} and Theorem \ref{th:dec_base} applied to decompose the set of bases of $M$, the set of external bases of $M(F_c)$, and the set of internal bases of $M/F_c$, for all cyclic flats $F_c$ of $M$.
\end{proof}

\begin{example}
{\rm
Figure \ref{fig:tabK4} shows the decomposition of bases of $K_4$, provided by Theorem \ref{th:dec_base}, completing Example \ref{ex1}, Example \ref{ex2}, and Figures \ref{fig:K4exbase256}, \ref{fig:exbasedecomp-136}, \ref{fig:exbasedecomp-126}, \ref{fig:exbasedecomp-146}, \ref{fig:exbasedecomp-256}.
}
\end{example}

\def\fcyc #1{\fbox{\hbox{#1}}}
\begin{figure}[H]
\begin{center}
\begin{tabular}{|c|c|c|c|c|c|}
\hline
Active filtrations & Active partitions &  {\small Uniactive bases of minors} & $\Ext$ & $\Int$ & Bases  \\
\hline
$\fcyc{\O}\subset 1\subset 123\subset E$ & $1+23+456$ & 1+2+4 & $\emptyset$& 124  & 124 \\
$\fcyc{\O}\subset 1\subset E$ & $1+23456$ & $1+26$& $\emptyset$& 12&  126 \\
$\fcyc{\O}\subset 145\subset E$ & $145+236$  & $15+2$ & $\emptyset$ & 12 & 125 \\
$\fcyc{\O}\subset 123\subset E$ & $123+456$& $13+4$& $\emptyset$ & 14  & 134 \\
$\fcyc{\O}\subset E$ & $123456$  & $135$&$\emptyset$ & 1  & 135 \\
$\fcyc{\O}\subset E$ & $123456$ & $136$&$\emptyset$ & 1&136 \\
$\emptyset\subset \fcyc{246}\subset E$ & $246+135$ & $46+1$& 2 &1 & 146 \\
$\emptyset\subset \fcyc{356}\subset E$ & $356+124$ & $56+1$&3 & 1& 156 \\
$\emptyset\subset \fcyc{123}\subset E$ & $123+456$ & $23+4$ &1&4& 234 \\
$\emptyset\subset \fcyc{145}\subset E$ & $145+236$ & $45+2$&1 &2& 245 \\
$\emptyset\subset \fcyc{E}$ & $123456$  & $235$& 1& $\emptyset$& 235 \\
$\emptyset\subset \fcyc{E}$ & $123456$  & $236$ & 1& $\emptyset$& 236 \\
$\emptyset\subset 356 \subset \fcyc{E}$ & $356+124$  & $56+2$ &13& $\emptyset$& 256 \\
$\emptyset\subset 246 \subset \fcyc{E}$ & $246+135$ & $46+3$ & 12& $\emptyset$&346 \\
$\emptyset\subset 23456\subset\fcyc{E}$ & $23456+1$ & $345+\emptyset$ &12& $\emptyset$& 345 \\
$\emptyset\subset 356\subset 23456\subset \fcyc{E}$ & $356+24+1$ & $56+4+\emptyset$ &123& $\emptyset$&456 \\
\hline
\end{tabular}
\caption{Table of all connected filtrations, bases, and related information, for the matroid $K_4$ from Figure \ref{fig:K4exbase256}, illustrating Theorem \ref{th:dec_base}. The cyclic flat of each connected filtration is boxed. Beware that, in this example, only the two trivial filtrations serve for more than one base, whereas, in general, a same filtration can obviously serve for numerous bases.}
\label{fig:tabK4}
\end{center}
\vspace{-0.5cm}
\end{figure}

\begin{prop}[Single-pass computation of the active partition of a matroid basis or a fundamental graph/tableau]
\label{prop:basori-partact}
Let $M$ be a matroid on a linearly ordered set of elements $E=e_1<\ldots<e_n$.
Let $B$ be a base of $M$.
The algorithm below computes the active partition of $B$ as a mapping, denoted $\ass$, from $E$ to $\Int(B)\cup \Ext(B)$, that maps an element onto the smallest element of its part in the active partition of $B$. An element is called internal, resp. external, if its image is in $\Int(B)$, resp. $\Ext(B)$.
Hence the active partition of $B$ is $$\biguplus_{e\ \in\ \Int(B)\ \cup\ \Ext(B)}\ass^{-1}(e),$$ with external part given by $\ass^{-1}(\Ext(B))$.
The algorithm consists in a single pass over $E$.
It only relies upon the fundamental graph/tableau (and can be equally applied to decompose a fundamental graph/tableau).
Note that the rules when $e_k\in B$ are dual to the rules when $e_k\not\in B$, and that the rules when $e_k$ is internal are dual to the rules when $e_k$ is external.

\begin{algorithme} \par

For $k$ from $1$ to $n$ do\par

\hskip 5mmif $e_k\in B$ then \par
\hskip 5mm\hskip 5mm if $e_k$ is internally active  w.r.t. $B$ then\par
\hskip 5mm\hskip 10mm $e_k$ is internal\par 
\hskip 5mm\hskip 10mm let $\ass(e_k):=e_k$\par 
\hskip 5mm\hskip 5mm otherwise\par

\hskip 5mm\hskip 10mm it there exists $c<e_k$ external in $C^*(B;e_k)$ then\par

\hskip 5mm\hskip 15mm $e_k$ is external\par
\hskip 5mm\hskip 15mm let $c$ $\in$ $C^*(B;e_k)$ with $c<e_k$, $c$ external and $\ass(c)$ the greatest possible \par
\hskip 5mm\hskip 15mm let $\ass(e_k):=\ass(c)$ \par

\hskip 5mm\hskip 10mm otherwise\par
\hskip 5mm\hskip 15mm $e_k$ is internal\par
\hskip 5mm\hskip 15mm let $c$ $\in$ $C^*(B;e_k)$ with $c<e_k$ and $\ass(c)$ the smallest possible\par
\hskip 5mm\hskip 15mm let $\ass(e_k):=\ass(c)$ \finsi\par

\hskip 5mmif $e_k\not\in B$ then \par
\hskip 5mm\hskip 5mm if $e_k$ is externally active w.r.t. $B$ then \par
\hskip 5mm\hskip 10mm $e_k$ is external\par
\hskip 5mm\hskip 10mm let $\ass(e_k):=e_k$\par
\hskip 5mm\hskip 5mm otherwise\par

\hskip 5mm\hskip 10mm if there exists $c<e_k$ internal in $C(B;e_k)$ then\par

\hskip 5mm\hskip 15mm $e_k$ is internal\par
\hskip 5mm\hskip 15mm let $c$ $\in$ $C(B;e_k)$ with $c<e_k$, $c$ internal and $\ass(c)$ the greatest possible\par
\hskip 5mm\hskip 15mm let $\ass(e_k):=\ass(c)$\par
\hskip 5mm\hskip 10mm otherwise\par

\hskip 5mm\hskip 15mm $e_k$ is external\par
\hskip 5mm\hskip 15mm let $c$ $\in$ $C(B;e_k)$ with $c<e_k$ and $\ass(c)$ the smallest possible\par
\hskip 5mm\hskip 15mm let $\ass(e_k):=\ass(c)$ \finsi\par

\end{algorithme}%
\end{prop}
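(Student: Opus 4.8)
The plan is to induct along the single pass, i.e.\ over $k$ from $1$ to $n$, proving that the value $\ass(e_k)$ returned by the algorithm is exactly the smallest element of the part of the active partition of $B$ (Definitions~\ref{def:act-seq-dec-fund-graph} and~\ref{def:act-part-bas}) that contains $e_k$; write $P(e)$ for that smallest element, so the goal is $\ass(e_k)=P(e_k)$ for all $k$. I would first note that the internal/external label the algorithm attaches to $e_k$ coincides with the classification of Corollary~\ref{cor::dec-base-cyclic-flat-algo}: when $e_k\in B$ is internally active (or $e_k\notin B$ is externally active) there is no strictly smaller opposite-type element in its fundamental cocircuit (resp.\ circuit), so both rule sets declare it internal (resp.\ external); in the remaining cases the algorithm's ``there exists $c<e_k$ external/internal'' test is literally that of the corollary. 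Hence the split of $E$ into internal and external elements is correct, and only the $\ass$-values remain.

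The engine of the proof is a dictionary turning membership in the nested active closures into comparisons of $\ass$-values. By Definition~\ref{def:act-seq-dec-fund-graph} one has $\AA(\{a_j,\dots,a_\io\})=E\s F_{j-1}$ and dually $\AA(\{a'_j,\dots,a'_\ep\})=F'_{j-1}$, while by Lemma~\ref{lem:pty-act-dec-seq-bas} the part of an internal element $c$ is $F_m\s F_{m-1}$ with $P(c)=a_m$ and the part of an external element $c$ is $F'_{m-1}\s F'_m$ with $P(c)=a'_m$. Reading off the nesting of the $F_k$'s (resp.\ the $F'_k$'s) then gives, for an internal $c$,
\[
c\in\AA(\{a_j,\dots,a_\io\})\iff a_j\le a_m=P(c),
\]
and, for an external $c$,
\[
c\in\AA(\{a'_j,\dots,a'_\ep\})\iff a'_j\le a'_m=P(c).
\]
So by the induction hypothesis the values $\ass(c)$ for $c<e_k$ are exactly enough to decide, for each generating set, which smaller elements lie in the corresponding closure; one also uses that an internal element never lies in an external closure nor conversely, since the internal and external parts $E\s F_c$ and $F_c$ are disjoint (Proposition~\ref{prop:dec-base-cyclic-flat}).

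I would then run the case analysis, invoking Lemma~\ref{lem:act-closure-algo} and its dual (Observation~\ref{obs:dual-closure}) to characterise, in a single pass, when $e_k$ enters a closure; the part index $m$ of $e_k$ is the largest $j$ with $e_k\in\AA(\{a_j,\dots,a_\io\})$ in the internal case (resp.\ with $e_k\in\AA(\{a'_j,\dots,a'_\ep\})$ in the external case), so $P(e_k)=a_m$ (resp.\ $a'_m$). If $e_k$ is active, it is the minimum of its own part, so $\ass(e_k)=e_k=P(e_k)$ (this subsumes the base case $e_k=\min E$). If $e_k\in B$ is internal but not active, the closure rule puts $e_k\in\AA(\{a_j,\dots,a_\io\})$ iff every $c\in C^*(B;e_k)^<$ (all internal here) lies there, i.e.\ iff $a_j\le\min_c P(c)$, giving $P(e_k)=\min_c\ass(c)$ as the ``smallest possible'' rule prescribes. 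If instead $e_k\in B$ is external, the dual rule puts $e_k\in\AA(\{a'_j,\dots,a'_\ep\})$ iff some external $c\in C^*(B;e_k)^<$ lies there, i.e.\ iff $a'_j\le\max_c P(c)$, giving $P(e_k)=\max_c\ass(c)$ as the ``greatest possible'' rule prescribes. The two cases $e_k\notin B$ follow symmetrically by swapping $C^*$ with $C$, internal with external, and the two families of closures.

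The delicate point is exactly these mixed cases, a basis element turning out external and, dually, a non-basis element turning out internal: there one must pass from the primal internal closure to the dual external closure, check that the opposite-type neighbours in the fundamental cocircuit/circuit contribute nothing (via the disjointness of the internal and external parts), and verify that it is now a \emph{maximum} rather than a \emph{minimum} over the same-type neighbours that recovers the part index. This min/max reversal, forced by the duality of Observation~\ref{obs:dual-closure}, is the only genuinely non-routine bookkeeping; everything else is a direct transcription of Lemma~\ref{lem:act-closure-algo} through the dictionary above.
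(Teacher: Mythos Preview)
Your proposal is correct and follows essentially the same approach as the paper: both verify the internal/external labelling via Corollary~\ref{cor::dec-base-cyclic-flat-algo}, then use Lemma~\ref{lem:act-closure-algo} (and its dual via Observation~\ref{obs:dual-closure}) to check, case by case, that the largest index $j$ for which $e_k$ enters the relevant active closure is recovered by the algorithm's min/max rule on the $\ass$-values of smaller neighbours. Your explicit ``dictionary'' translating closure membership into inequalities on $P(c)$ is a slightly more systematic packaging of exactly the computation the paper carries out in its second and third cases (and then dualises); the paper likewise dispatches the mixed cases by duality rather than writing them out, so your extra care there is welcome but not a different idea.
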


\begin{proof}
Let us denote $a_1,\dots,a_\io$, resp. $a'_1,\dots,a'_\ep$, the set of internally, resp. externally, active elements of $B$, and  $(F'_\ep, \ldots, F'_0, F_c , F_0, \ldots, F_\io)$ the active filtration of $\F_M(B)$.
Before giving a formal proof, let us mention that this algorithm simply consists in a direct combination of the following algorithms,
each consisting in a single pass over $E$.
The second and third algorithms do not interfere in each other, since they consist in refinements of the two separate outputs of the first algorithm.
\begin{itemize}
\partopsep=0mm \topsep=0mm \parsep=0mm \itemsep=0mm
\item The algorithm of Corollary \ref{cor:dec-cyclic-flat} that computes the external/internal partition.
\item The algorithm of Lemma \ref{lem:act-closure-algo}, applied in priority to $X=\{a_\io\}$, then to $X=\{a_{\io-1},a_\io\}$, etc., then to $X=\{a_1,\dots, a_\io\}$. By this manner,  an element $e$ belonging to the internal part is mapped onto $a_i$, where $i$ is the greatest possible 
 such that $e\in \AA(a_{i},\dots,a_\io\}=E\s F_{i-1}$,
in order to have $e\in F_i\setminus F_{i-1}$, consistently with the definition of the active partition.
\item The algorithm of Lemma \ref{lem:act-closure-algo} applied in the dual, and in priority to $X=\{a'_\ep\}$, then to $X=\{a'_{\ep-1},a'_\ep\}$, etc., then to $X=\{a'_1,\dots, a'_\ep\}$. By this manner,  an element $e$ belonging to the external part is mapped onto $a'_i$, where $i$ is the greatest possible such that $e\in \AA(a'_{i+1},\dots,a'_\ep\}=F'_i$, in order to have
$e\in F'_{i-1}\setminus F'_{i}$, consistently with the definition of the active partition.
\end{itemize}

Now, let us verify precisely the assignments given in the algorithm statement.
Let $1\leq k\leq n$.
First, assume that $e_k\in B$ and $e_k$ is internally active, then, obviously, $\ass(e_k)=e_k$.

Second, assume that $e_k\in B$, $e_k$ is not internally active, and
every $c$ in $C^*(B;e_k)^<$ is internal.
Then, by Corollary \ref{cor:dec-cyclic-flat} , $e_k$ is internal.
Then, by Lemma \ref{lem:act-closure-algo}, we have $e_k\in\AA(a_{i},\dots,a_\io\}$ for all $i$ such that $C^*(B;e_k)^<\subseteq \AA(a_{i},\dots,a_\io\}$. Let $i$ be the greatest possible with this property. By definition of the active partition, we have $\ass(e_k)=a_i$, as we have $e_k\in \AA(a_{i},\dots,a_\io\}\s \AA(a_{i+1},\dots,a_\io\}= F_i\setminus F_{i-1}$.
Let $c\in C^*(B;e_k)^<$. We also have by definition of the active partition that $\ass(c)=a_j$ where $j$ is the greatest possible such that 
$c\in \AA(a_{j},\dots,a_\io\}$.
Since  $c\in C^*(B;e_k)^<\subseteq  \AA(a_{i},\dots,a_\io\}$ by definition of $i$, we have $i\leq j$.
Assume now that  $c\not\in \AA(a_{i+1},\dots,a_\io\}$ (such a $c$ exists by definition of $i$). In this case, we have $i=j$, by definition of $j$.
We have proved that $a_i=\ass(e_k)$ is the smallest possible $a_j=\ass(c)$ over all $c\in C^*(B;e_k)^<$, which is exactly the assignment given in the algorithm.

Third, let us assume that
 assume that $e_k\not\in B$, $e_k$ is not externally active, and there exists $c\in C(B;e_k)^<$ which is internal.
 Then, by Corollary \ref{cor:dec-cyclic-flat} , $e_k$ is internal.
Then, by Lemma \ref{lem:act-closure-algo}, we have
 $e_k\in\AA(a_{i},\dots,a_\io\}$ for all $i$ such that there exists
 $c\in C(B;e_k)^< \cap \AA(a_{i},\dots,a_\io\}$. Let $i$ be the greatest possible with this property. By definition of the active partition, we have $\ass(e_k)=a_i$(as above). By definition of $c$, we have also $c\in \AA(a_{i},\dots,a_\io\}\s \AA(a_{i+1},\dots,a_\io\}= F_i\setminus F_{i-1}$, that is $\ass(c)=a_i=\ass(a_k)$.
 We have proved that $a_i=\ass(e_k)$ is the greatest possible $a_j=\ass(c)$ over all $c\in C(B;e_k)^<  \cap \AA(a_{1},\dots,a_\io\}$, which is exactly the assignment given in the algorithm.

The three other cases (where $e_k$ is external) are dual to the three above cases,
which completes the proof.
\end{proof}

\begin{proof}[Proof of Theorem \ref{th:tutte}]
%
First, the fact that the two sets of sequences can be equally used directly  comes from Lemma \ref{lem:dec-seq-beta}.
Now, let us focus on the sum over connected filtrations of $M$. Recall that:

\vspace{-3mm}
\begin{itemize}[-]
\partopsep=-1mm \topsep=-1mm \parsep=-1mm \itemsep=-1mm
\item  for a matroid $M$ with at least two elements, there exists a uniactive internal basis, and equivalently a uniactive external basis, of $M$ if and only if $\beta(M)\not=0$, and equivalently $\beta^*(M)\not=0$;
\item
for a matroid $M$ with one element, $\beta(M)\not=0$ if and only if $M$ is an isthmus (which is an internal basis);
\item for a matroid $M$ with one element, $\beta^*(M)\not=0$ if and only if $M$ is a loop (which is an external basis).
\end{itemize}
\vspace{-2mm}
So we have that $\beta(M)\not=0$, resp. $\beta^*(M)\not=0$, if and only if $M$ has a uniactive internal, resp. external, basis.
Then  the formula given in the theorem is 
exactly the enumerative translation of Theorem \ref{th:dec_base}.

More precisely, consider the set of bases of $M$ with internal activity $\io$ and external activity $\ep$, whose cardinality is $b_{\io,\ep}$.
By Theorem \ref{th:dec_base}, using the same notations, this set bijectively corresponds to the set 
$\biguplus\ \bigl\{ B  \mid    
 \text{ for }  1\leq k\leq\io,\ B_k \hbox{ uniactive internal in } M_k,
\hbox{ and for }
  1\leq k\leq~\ep,$ $B'_k \hbox{ uniactive external in } M'_k  \bigr\}$
where the union is over all connected filtrations of $M$ with fixed $\io$ and $\ep$.
The cardinality of each part of this set is obviously 
$\Bigl(\prod_{1\leq k\leq \io}
\beta \bigl( M_k\bigr)\Bigr)
 \ \Bigl(\prod_{1\leq k\leq \ep}\beta^* \bigl( M'_k\bigr)\Bigr)$
 since $\beta$, resp. $\beta^*$, counts the number of uniactive internal, resp.  external, bases.
 By construction, the sum is the number of bases with internal activity $\io$ and external activity $\ep$, that is
 the coefficient $b_{\io,\ep}$ of $x^\io y^\ep$ 
in the Tutte polynomial, hence the result.
%
\end{proof}


%
%

\emevder{VERIF REFS}%







\bibliographystyle{amsplain}




\providecommand{\bysame}{\leavevmode\hbox to3em{\hrulefill}\thinspace}
\providecommand{\MR}{\relax\ifhmode\unskip\space\fi MR }
\providecommand{\href}[2]{#2}


\end{document}